\documentclass[12pt,letterpaper,reqno]{amsart}
\usepackage{titlesec}
\titleformat{\section}
  {\centering\normalfont\large\bfseries}{\thesection.}{1em}{}
  \titleformat{\subsection}
  {\normalfont\normalsize\bfseries}{\thesubsection.}{1em}{}
\usepackage{comment}
\usepackage{appendix}
\usepackage{graphicx} 
\usepackage{amsfonts}
\usepackage{amsmath}
\usepackage{pgfplots}
\usepackage{hyperref}
\allowdisplaybreaks
\usepackage{amsthm}
\usepackage{subcaption}
\usepackage{multicol}
\usepackage{xcolor}
\usepackage{mathtools}
\usepackage{fancyhdr,amssymb}
\usepackage{url}
\usepackage{enumerate}
\usepackage{dirtytalk}
\usepackage{tikz-3dplot}

\newcommand{\N}{\mathbb{N}}
\newcommand{\Z}{\mathbb{Z}}

\newcommand{\R}{\mathbb{R}}
\newcommand{\C}{\mathbb{C}}

\newcommand{\Ld}{\Lambda}

\theoremstyle{plain}

\newtheorem{theorem}{Theorem}[section]

\newtheorem{definition}[theorem]{Definition}

\newtheorem{remark}[theorem]{Remark}

\newtheorem{conjecture}[theorem]{Conjecture}

\newtheorem{question}[theorem]{Question}

\tdplotsetmaincoords{70}{160}

\title{\(\ell^{p}\) improving estimates for multilinear forms motivated by distance graphs}

\author{Eyvindur Palsson}\email{palsson@vt.edu}\address{Department of Mathematics, Virginia Tech, Blacksburg, VA 24061}

\author{Jennifer Smucker}\email{jennifer21@vt.edu}\address{Department of Mathematics, Virginia Tech, Blacksburg, VA 24061}

\begin{document}

\begin{abstract}

We undertake a systematic study of the mapping properties of forms based on distance graphs in \(\Z^{d}\) to see how the structure of a graph, \(G\), affects the \(\ell^{p}\) improving estimates of the form, \(\Lambda_{G}\), based on \(G\). This extends previous work on \(\ell^{p}\) improving properties for the spherical averaging operator, which corresponds to a distance graph of a single distance. We obtain \(\ell^{p}\) improving estimates for the collection of forms based on all graphs with 2, 3, and 4 vertices, as well as chains and simplexes of any size in \(\Z^{d}\). Surprisingly, certain mapping properties only seem to depend on the number of vertices in the graph, not its structure, and forms based on subgraphs of a graph, \(G\), do not necessarily inherit all mapping properties from \(G\).

\end{abstract}


\maketitle

\section{Introduction}

Spherical averages on \(\R^{d}\) are operators \(A_{\lambda}\) such that
        \[A_{\lambda}(f)(x)=f\ast d\sigma_{\lambda}(x)=\int_{S_{\lambda}}f(x-y)d\sigma_{\lambda}(y)\]
where \(d\sigma_{\lambda}\) is the normalized invariant measure on the sphere \(S_{\lambda}=\{x:|x|^{2}=\lambda\}\). Spherical averages are trivially bounded in \(L^{p}\) to \(L^{p}\) for \(p\geq 1\). They are fundamental in analysis and partial differential equations and show up in a wide range of problems such as the solution to the three-dimensional wave equation and in the Falconer distance problem on the interface of geometric measure theory and harmonic analysis.

To get pointwise estimates for the spherical averaging operator, one can take the worst radius, \(\lambda\), at each point, \(x\), giving the spherical maximal function: \[\displaystyle A_{\ast}(f)(x)=\sup_{0<\lambda<\infty}|A_{\lambda}(f)(x)|.\] A classic estimate\footnote{Here $A\lesssim B$ means there exists a constant $C$ s.t. $A\leq CB$ } for the spherical maximal function is that \[\|A_{\ast}(f)\|_{L^{p}(\R^{d})}\lesssim\|f\|_{L^{p}(\R^{d})}\]
for \(p>\frac{d}{d-1}\) and \(d\geq 2\), shown by Stein for \(d\geq 3\) in \cite{S76} and Bourgain for \(d=2\) in \cite{B86}.
\\

Spherical averages can also be defined in discrete settings. For \(\lambda\in\N\) and functions \(f:\Z^{d}\to\C\), the spherical averaging operator is defined as \[A_{\lambda}f(x)=\frac{1}{N_{\lambda}}\sum_{y\in\Z^{d}:|y|^{2}=\lambda}f(x-y)\] where \(N_{\lambda}=|\{y\in\Z^{d}:|y|^{2}=\lambda\}|\) is not zero. \(A_{\lambda}\) is the linear operator given by convolution with the discrete probability measure \(\sigma_{\lambda}:=\frac{1}{N_{\lambda}}1_{\{y\in\Z^{d}|y|^2=\lambda\}}.\)  Discrete spherical averages are also trivially bounded from \(\ell^{p}\) to \(\ell^{p}\) for \(p\geq 1\).\\

The discrete analogue of the spherical maximal function is \(\displaystyle A_{\ast}f(n)=\sup_{0<\lambda<\infty}|A_{\lambda}f(n)|\). It was first introduced by Magyar \cite{M97} but then Magyar, Stein, and Wainger \cite{MSW02} showed the sharp result that \(A_{\ast}\) is bounded on \(\ell^{p}\) for \(p>\frac{d}{d-2}\) when \(d\geq 5\). It is notable here that there is a distinct difference in the boundedness properties of the spherical maximal function in the continuous and discrete settings. This motivates exploring more differences between the mapping properties of averaging operators in continuous and discrete settings.

 \subsection{\(L^{p}\) improving}   
    In the continuous setting, we say that  \(A_{\lambda}\) is \(L^{p}\) improving if for some \(q>p\), \(A_{\lambda}\) is bounded from \(L^{p}\) to \(L^{q}\). That is, there is some constant \(C\) so that
        \begin{equation*}
            \|A_{\lambda}(f)\|_{L^{q}}\leq C\|f\|_{L^{p}}.
        \end{equation*}
    Classic results from Littman \cite{L73} and Strichartz \cite{S70} state that \[\|A_{\lambda}(f)\|_{L^{d+1}}\leq C\|f\|_{L^{\frac{d+1}{d}}}\] for \(d\geq 2\). This result can be interpolated with the trivial \(L^{p}\to L^{p}\) bound to get a larger range of \(L^{p}\) improving.

    In the discrete setting, one could ask the same question, \say{When are there exponents \(1\leq p,q\leq\infty\) such that \[\|A_{\lambda}f\|_{\ell^{q}(\Z^{d})}\leq C\|f\|_{\ell^{p}(\Z^{d})}\] where \(C\) is a constant independent of \(\lambda\)?}
    However, the answer is trivial. Using the contraction inequality and nesting of \(\ell^{p}\) spaces, it is true whenever \(1\leq p\leq q\leq\infty\).

    Both Kevin Hughes \cite{H20} and Kessler and Lacey \cite{KL20} address the \(\ell^{p}\) improving properties of the discrete spherical averaging operator by asking the following question:
    \begin{question}[Hughes \cite{H20}, Kessler-Lacey \cite{KL20}]\label{sphavgquestion}
        For each \(1<p<2\), what is the best exponent \(\eta_{p}<0\) so that
        \[\|A_{\lambda}f\|_{\ell^{p'}(\Z^{d})}\leq C\lambda^{\eta_{p}}\|f\|_{\ell^{p}(\Z^{d})}\] where \(C\) is a constant independent of \(\lambda\in\N\) and \(f\in\ell^{p}(\Z^{d})\)?
    \end{question}
    This question asks specifically about \(p'\) where \(\frac{1}{p}+\frac{1}{p'}=1\), but we can ask a similar question about a range of \(q>p\).
    \begin{question}
        For which \(1<p,q<\infty\), is there an exponent \(\eta_{p,q}<0\) so that
        \[\|A_{\lambda}f\|_{\ell^{q}(\Z^{d})}\leq C\lambda^{\eta_{p,q}}\|f\|_{\ell^{p}(\Z^{d})}\] where \(C\) is a constant independent of \(\lambda\in\N\) and \(f\in\ell^{p}(\Z^{d})\)?
    \end{question}
    This formulation of the idea of \(\ell^{p}\) improving is that not only is the \(\ell^{q}\) norm of the spherical average of a function bounded by the \(\ell^{p}\) norm of the function, there is also an additional controlling factor depending on the radius of the spherical average. As that radius, \(\lambda\), increases, the control on the \(\ell^{q}\) norm of the spherical average will grow more and more restrictive.

    Both Hughes and Kessler-Lacey answered Question \ref{sphavgquestion} with the following theorem.
        \begin{theorem}[Hughes \cite{H20}, Kessler-Lacey \cite{KL20}]\label{hklthm}
        If \(d\geq 5\) and \(\frac{d+1}{d-1}< p \leq 2\), then there exists constants \(C_{p}\) depending on \(p\) such that for all \(\lambda\in\N\) , we have the \(\ell^{p}\)-improving inequality 
    \begin{equation}
        \|A_{\lambda}f\|_{\ell^{p'}(\Z^{d})}\leq C_{p}\lambda^{\frac{d}{2}(1-\frac{2}{p})}\|f\|_{\ell^{p}(\Z^{d})}.
    \end{equation}
    The implicit constants are independent of \(\lambda\).
    \end{theorem}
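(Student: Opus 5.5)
The plan is to run the Hardy--Littlewood circle method on the kernel \(\sigma_{\lambda}\) and then interpolate between an \(\ell^{1}\to\ell^{\infty}\) bound and an \(\ell^{2}\to\ell^{2}\) bound. The trivial \(\ell^{1}\to\ell^{\infty}\) bound \(\|\sigma_\lambda\|_\infty = N_\lambda^{-1}\approx \lambda^{-(d-2)/2}\) (valid for \(d\ge5\)) does not by itself give the claimed exponent. At \(p=1\) the target \(\lambda^{-d/2}\) is much stronger, so one cannot simply interpolate the raw operator. Instead, following Magyar--Stein--Wainger, I would write
\[\sigma_\lambda = \sum_{s\ge 1} M_{\lambda,s} + E_\lambda,\]
where \(M_{\lambda,s}\) collects the major arc pieces with denominators \(q\approx 2^{s}\). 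Concretely, \(\widehat{M_{\lambda,s}}(\xi)=\sum_{q\sim 2^s}\sum_{(a,q)=1} e(-\lambda a/q)\sum_{\ell\in\Z^d_q} G(a/q,\ell)\,\Psi_q(\xi-\ell/q)\,\widetilde{d\sigma_\lambda}(\xi-\ell/q)\), where \(G\) is the normalized Gauss sum (\(|G|\lesssim q^{-d/2}\)) and \(\widetilde{d\sigma_\lambda}\) is the continuous spherical measure transform at radius \(\sqrt\lambda\). The error \(E_\lambda\) has \(\ell^2\to\ell^2\) norm \(\lesssim\lambda^{-(d-4)/4}\) or better, by Magyar's minor arc/Kloosterman estimates.

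For each piece I would prove two endpoint bounds.

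\textbf{Step 1 (\(\ell^2\to\ell^2\)).} The multiplier of \(M_{\lambda,s}\) is bounded by \(\sum_{q\sim2^s}\sum_a q^{-d/2}\lesssim 2^{s(2-d/2)}\), and the disjointness of the cutoffs \(\Psi_q\) is used to avoid summing over \(\ell\).

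\textbf{Step 2 (\(\ell^1\to\ell^\infty\)).} The kernel of \(M_{\lambda,s}\) is computed as a product of an arithmetic factor and a smoothed continuous spherical kernel. Its sup norm is \(\lesssim \lambda^{-(d-2)/2}\cdot 2^{s}\), or more carefully \(\lambda^{-(d-1)/2}\) times the Ramanujan-sum type factor \(\sum_{q\sim2^s}|\sum_a e(\cdot)|\lesssim 2^{2s}\) restricted to \(q\le\sqrt\lambda\). The key input is that \(d\sigma_\lambda\) convolved with a bump at scale \(q\) has sup \(\lesssim q\lambda^{-(d-1)/2}\cdot\lambda^{-1/2}\)-type decay, reflecting the spherical shell of thickness \(q\).

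Interpolating Steps 1 and 2 via Riesz--Thorin gives, for \(\theta=2/p'\),
\[\|M_{\lambda,s}\|_{p\to p'}\lesssim \lambda^{\frac d2(1-\frac2p)}\,2^{s(\alpha(p))}.\]
Here \(\alpha(p)<0\) exactly when \(p>\frac{d+1}{d-1}\), which makes the sum over \(s\) (with \(2^s\lesssim\sqrt\lambda\)) converge. The claimed power of \(\lambda\) arises from the continuous Littman--Strichartz-type scaling \(\lambda^{-d/2}\) at \(\ell^1\to\ell^\infty\) versus \(1\) at \(\ell^2\). For \(E_\lambda\), I would interpolate its \(\ell^2\) decay with its trivial \(\ell^1\to\ell^\infty\) bound \(\lesssim\lambda^{-(d-2)/2}\) and check that the result is dominated by \(\lambda^{\frac d2(1-\frac2p)}\) in the stated range.

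\textbf{Main obstacle.} The hard step is Step 2: obtaining a sharp pointwise kernel bound for \(M_{\lambda,s}\) that beats \(N_\lambda^{-1}\). This requires exact Gauss sum evaluation, giving Kloosterman/Ramanujan-sum cancellation in \(\sum_a e(-\lambda a/q+ a|x|^2/q)\), together with stationary phase for the continuous sphere, yielding decay in \(q\) large enough to sum. I would first try bounding the \(a\)-sum by the Ramanujan sum \(c_q(|x|^2-\lambda)\) and using \(\sum_{q\le Q}|c_q(n)|\lesssim Q^{1+\epsilon}d(n)\). A secondary difficulty is verifying that the error term \(E_\lambda\) is not worse than the main term near the endpoint \(p=\frac{d+1}{d-1}\).
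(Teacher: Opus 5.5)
The paper gives no proof of this statement; it is quoted from Hughes and Kesler--Lacey, so your outline has to stand on its own. The architecture is the right one: a Magyar--Stein--Wainger approximation, followed by piecewise interpolation of $\ell^2\to\ell^2$ against $\ell^1\to\ell^\infty$. But the endpoint bounds you propose yield neither the power of $\lambda$ nor the range of $p$.

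\textbf{The power of $\lambda$.} With $\theta=2/p'$ one has $\frac{d}{2}(1-\frac{2}{p})=-\frac{d}{2}(1-\theta)$. Your $\ell^2$ bounds carry no power of $\lambda$, so every piece would need an $\ell^1\to\ell^\infty$ bound of the form $C(s)\lambda^{-d/2}$. This fails. The kernel of the $q$-th piece (all $a\in\mathbb{Z}_q^*$ together) is exactly $c_q(|x|^2-\lambda)\,k_q(x)$, where $c_q$ is the Ramanujan sum and $k_q$ is $d\sigma_\lambda$ smoothed at scale $q$. Thickening the shell lowers its height, so $|k_q|\approx q^{-1}\lambda^{-(d-1)/2}$ near the sphere, not $q\lambda^{-d/2}$. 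Already for $q=1$ the sup is $\approx\lambda^{-(d-1)/2}$, so Riesz--Thorin between your two endpoints gives at best $\lambda^{\frac{d-1}{2}(1-\frac{2}{p})}$.

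The missing step is the discrete form of the Littman--Strichartz argument: decompose $\widetilde{d\sigma_\lambda}(\xi-\ell/q)$ dyadically according to $|\xi-\ell/q|\sim 2^{j}\lambda^{-1/2}$. On the $(q,j)$ piece, the decay of $\widetilde{d\sigma}$ gives an extra factor $2^{-j(d-1)/2}$ in the $\ell^2$ bound. The kernel is $c_q$ times a shell of width $\sqrt{\lambda}\,2^{-j}$, so the $\ell^1\to\ell^\infty$ norm is $\lesssim q\,2^{j}\lambda^{-d/2}$. Interpolation then produces $\lambda^{\frac{d}{2}(1-\frac{2}{p})}$, times $2^{j(1-\frac{d+1}{p'})}$ (summable in $j$ for $p>\frac{d+1}{d}$), times a power of $q$.

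\textbf{The range of $p$.} That power of $q$ decides the range, and your Step 1 is too weak for it. With trivial Gauss sum bounds summed over $a$ ($q^{1-d/2}$ per $q$), the factor is $q^{1-d/p'}$. This is summable only for $p>\frac{d}{d-2}$, a strictly smaller range than $p>\frac{d+1}{d-1}$. Similarly, $\|E_\lambda\|_{2\to 2}\lesssim\lambda^{-(d-4)/4}$ interpolated against $\lambda^{-(d-2)/2}$ stays below $\lambda^{\frac{d}{2}(1-\frac{2}{p})}$ only for $p\geq\frac{d}{d-2}$.

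The threshold $\frac{d+1}{d-1}$ comes from the Kloosterman refinement on the $\ell^2$ side. Evaluating the Gauss sum turns $\sum_{a\in\mathbb{Z}_q^*}e(-\lambda a/q)G(a/q,\ell)$ into a Kloosterman or Sali\'e sum, bounded by $q^{-\frac{d-1}{2}+\epsilon}\gcd(\lambda,q)^{1/2}$. The factor then becomes $q^{1-\frac{d+1}{p'}+\epsilon}$, summable exactly when $p>\frac{d+1}{d-1}$. The same refinement improves the error to $\lambda^{-\frac{d-3}{4}+\epsilon}$, which is again precisely what $p>\frac{d+1}{d-1}$ requires.

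Looking for this cancellation on the kernel side, as you propose, cannot work. In physical space the $a$-sum is $c_q(|x|^2-\lambda)$, which equals $\phi(q)$ on the sphere $|x|^2=\lambda$ where the kernel is concentrated. So the sup norm sees no cancellation; averaging $|c_q(n)|$ over $q$ only helps when $n\neq 0$. Finally, the $\gcd(\lambda,q)^{1/2}$ losses must still be summed over $q$, and keeping the constant uniform in $\lambda$ there needs care.
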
 
    One can see that the exponent is sharp by using an example proposed by Hughes that takes \(f\) to be an indicator function for a ball of radius approximately \(\sqrt{\lambda}\).\\
    
    Similarly to the continuous setting, these results can be interpolated with trivial \(\ell^{p}\to\ell^{p}\) results to get that \begin{equation}\label{spheq}
            \|A_{\lambda}f\|_{q}\lesssim\lambda^{\frac{d}{2}(\frac{1}{q}-\frac{1}{p})}\|f\|_{p}
        \end{equation}
        when \(\frac{1}{p}>\frac{1}{q}>\frac{2}{d-1}(\frac{1}{p})\) and \(\frac{1}{p}<\frac{d-1}{d+1}\), or \(\frac{1}{p}>\frac{1}{q}>\frac{d-1}{2}(\frac{1}{p}-1)+1\) and \(\frac{1}{p}\geq\frac{d-1}{d+1}\).
    The region formed by these conditions is shown in Figure \ref{sphfig}.
    By using the same example as Hughes, this range is also sharp.
\begin{remark}
    We use the notation \(\|A_{\lambda}f\|_{q}\lesssim\lambda^{\frac{d}{2}(\frac{1}{q}-\frac{1}{p})}\|f\|_{p}\) to indicate that there is some constant \(C\) such that \(\|A_{\lambda}f\|_{q}\leq C\lambda^{\frac{d}{2}(\frac{1}{q}-\frac{1}{p})}\|f\|_{p}\).
\end{remark}
    \begin{figure}
\centering
\begin{tikzpicture}
    \begin{axis}[
        axis lines = middle,
        xlabel = \(1/p\),
        ylabel = \(1/q\),
        xmin = -0.3, xmax = 1.6,
        ymin = -0.2, ymax = 1.2,
        xtick = {0, 0.5,  1},
        ytick = {0,  0.5, 1},
        grid = both,
        width = 8cm,
        height = 8cm,
        axis equal image
    ]

    \addplot[thick, fill=blue!20] coordinates {
        (0, 0)
        (1, 1)
        (0.9, 0.1)
        (0, 0)
    };

    \addplot[dotted, thick] coordinates {
    (0.5,0.5) (0.9, 0.1)
    };

    \node at (axis cs: 0, 0) [below left] {\((0, 0)\)};
    \node at (axis cs: 1, 1) [above right] {\((1, 1)\)};
    \node at (axis cs: 0.9, 0.35) [below right] {\((\frac{d-1}{d+1}, \frac{2}{d+1})\)};

    \end{axis}
\end{tikzpicture}
\caption{Region where (\ref{spheq}) holds for the spherical averaging operator.}
\label{sphfig}
\end{figure}

\begin{remark}    Because of the nesting of \(\ell^{p}\) spaces, Figure \ref{sphfig} indicates that there should be \(\ell^{p}\) improving results in the region below the triangle with a worse exponent on \(\lambda\). Indeed this is the case, although the best result is achieved through interpolation and Young's convolution inequality rather than the nesting of \(\ell^{p}\) spaces.  
    Using Young's convolution inequality we have,
        \begin{align*}
            \|A_{\lambda}f\|_{\infty}&= \|f\ast\sigma_{\lambda}\|_{\infty}
            \leq\|f\|_{1}\|\sigma_{\lambda}\|_{\infty}
            \lesssim\|f\|_{1}\lambda^{-\frac{d-2}{2}}.
        \end{align*}
         We interpolate between this bound where \((\frac{1}{p},\frac{1}{q})=(1,0)\) and some point \((\frac{1}{p},\frac{1}{q})\) that lies in the region of \(\ell^{p}\) improving for the spherical averaging operator in Figure \ref{sphfig}. For a point \(\left(\frac{1}{p_{\theta}},\frac{1}{q_{\theta}}\right)\) with \(\frac{1}{p_{\theta}}=1-\theta+\frac{\theta}{p}\) and \(\frac{1}{q_{\theta}}=\frac{\theta}{q}\) with \(0<\theta<1\), interpolation then gives that 
    \[\|A_{\lambda}\|_{q_{\theta}}\lesssim\lambda^{\frac{d}{2}(\frac{1}{q_{\theta}}-\frac{1}{p_{\theta}})+(1-\theta)}\|f\|_{p_{\theta}}.\]
        Thus we have \(\ell^{p}\) improving results for the spherical averaging operator if \((\frac{1}{p_{\theta}},\frac{1}{q_{\theta}})\) is outside of the region in Figure \ref{sphfig}  as long as \(0\leq \frac{1}{q_{\theta}}\leq\frac{1}{p_{\theta}}\leq1\), but the exponent, \(\frac{d}{2}(\frac{1}{q_{\theta}}-\frac{1}{p_{\theta}})+(1-\theta)\), is worse outside of the region in Figure \ref{sphfig}. This exponent can be shown to be sharp by taking \(f\) to be a ball of radius \(\lambda^{\frac{a}{2}}\) where \(a=1-(1-\theta)(\frac{2}{d})p_{\theta}\). 
    \end{remark}

\subsection{Multilinear operators and forms}
 The spherical averaging operator arises in several distance problems, such as the Falconer distance problem in the continuous setting, and the Erd\H{o}s-Falconer distance problem in finite fields. Many other distance graphs have been studied for Falconer type problems, which motivates looking at \(\ell^{p}\) improving results for the forms based on these graphs. Some examples are chains \cite{BIT16,OT22,pinnedtrees}, simplices \cite{GI12,EHI13,GGIP15,GILP15,GIT22,PRA23,GIT24,PRA25,IPPS22} and cycles \cite{GIP17,IMMM25} and even very recent results on general distance graphs \cite{BFOPRA26}. For a more comprehensive list of references on distance graphs see \cite{BFOPRA26}. \\

 Consider a graph, \(G=(\mathcal{V},\mathcal{E})\), with vertex set \(\mathcal{V}=\{v_{1},v_{2},\dots,v_{|\mathcal{V}|}\}\), and edge set \(\mathcal{E}\). We say that a set of points \(\{x_{1},x_{2},\dots,x_{|\mathcal{V}|}\}\) where \(x_{i}\in \Z^{d}\) for \(1\leq i\leq|\mathcal{V}|\) is in a distance configuration given by \(G\) and \(\lambda\in\N\) if for all \((v_{i},v_{j})\in\mathcal{E}\), we have that \(|x_{i}-x_{j}|=\lambda^{\frac{1}{2}}\).\\

\begin{definition}
   For a distance graph, \(G\), with \(k\) vertices, and \(\lambda\in\N\), let \(S_{G,\lambda}(x)\) be the set of all collections of points \(\{x_{2},...,x_{k}\}\) with \(x_{i}\in\Z^{d}\)  \(\forall\, 1\leq i\leq|\mathcal{V}|\) such that \(\{x,x_{2},...x_{k}\}\) is in a distance configuration given by \(G\) and \(\lambda\). Define \(N_{G}(\lambda)=|S_{G,\lambda}|\). Then, the form based on \(G\) and \(\lambda\) is defined as \[\Lambda_{G,\lambda}(f_{1},...,f_{k})=\frac{1}{N_{G}(\lambda)}\sum_{x_{1},...x_{k}\in\Z^{d}}1_{S_{G,\lambda}(x_{1})}(x_{2},...,x_{k})f_{1}(x_{1})...f_{k}(x_{k}).\]

We say that \(\Lambda_{G}\) is \(\ell^{p}\) improving for \(p_{1},\dots,p_{k}\) if 
\begin{equation}
    \Lambda_{G,\lambda}(f_{1},\dots,f_{k})\lesssim \lambda^{\eta}\|f_{1}\|_{p_{1}}\dots\|f_{k}\|_{p_{k}}
\end{equation}
for some \(\eta<0\) independent of \(\lambda\).\\
\end{definition}

\begin{remark}
    Due to \(\lambda\) being present throughout, we suppress \(\lambda\) from the notation and write \(\Lambda_{G,\lambda}\) as \(\Lambda_{G}\), \(N_{G}(\lambda)\) as \(N_{G}\), and \(1_{S_{G,\lambda}(x_{1})}(x_{2},x_{3},...,x_{k})\) as \(S_{G}(x_{1},x_{2},...,x_{k})\). We use the parameter \(\lambda\) for all distances which places our work in a single parameter setting. We leave the multi-parameter setting for future work. Additionally, we denote by \(S_{\lambda}^{d-k}\) a \(d-k\) dimensional sphere in \(\Z^{d}\). When \(k=1\), we write \(S_{\lambda}\) instead of \(S_{\lambda}^{d-1}\) for simplicity. For a discussion of assymptotics for \(N_{G}\), see Section \ref{size section}.
\end{remark}

The simplest example is a form based on \(P_{1}\), a graph with two vertices and one edge connecting them. Then, the form based on \(P_{1}\) is \[\Lambda_{P_{1}}(f_{1},f_{2})=\frac{1}{N_{P_{1}}}\sum_{x_{1},x_{2}\in\Z^{d}}f_{1}(x_{1})f_{2}(x_{2})S_{\lambda}(x_{1}-x_{2})\approx\frac{1}{|S_{\lambda}|}\sum_{x_{1},x_{2}\in\Z^{d}}f_{1}(x_{1})f_{2}(x_{2})S_{\lambda}(x_{1}-x_{2}),\]
where we use the notation \(N_{P_{1}}\approx |S_{\lambda}|\) to indicate that \(|S_{\lambda}|\lesssim N_{P_{1}}\lesssim |S_{\lambda}|.\) The \(\ell^{p}\) improving estimates for this form come as a direct consequence of Theorem \ref{hklthm}. We can write \(\Lambda_{P_{1}}\) as an inner product of a function and the spherical averaging operator, then apply H\"{o}lders inequality to get
\begin{align*}
    \Lambda_{P_{1}}(f_{1},f_{2})&\approx\langle f_{1},A_{\lambda f_{2}}\rangle\\
    &\leq\|f_{1}\|_{p_{1}}\|A_{\lambda}f_{2}\|_{(1-\frac{1}{p_{1}})^{-1}}\\
    &\lesssim \lambda^{\frac{d}{2}(1-\frac{1}{p_{1}}-\frac{1}{p_{2}})}\|f_{1}\|_{p_{1}}\|f_{2}\|_{p_{2}}
\end{align*}
provided that \((\frac{1}{p_{2}},1-\frac{1}{p_{1}})\) is in the region of \(\ell^{p}\) improving for the spherical averaging operator obtained through interpolating Theorem \ref{hklthm} with the trivial \(\ell^{p}\to\ell^{p}\) results. This is discussed more fully in Section \ref{1 chain section}.\\

Indeed, all of these forms can be written as the inner product of a function and an operator, and thus, finding \(\ell^{p}\) improving estimates for the form based on a graph gives information about \(\ell^{p}\) improving estimates for all the operators based on that graph.\\

For more complex examples, consider the forms based on the triangle and the 2-chain, shown in Figure \ref{toyshapes}. 

\begin{figure}
\centering


\begin{subfigure}{0.45\textwidth}
\centering
\begin{tikzpicture}
    \coordinate (A) at (0,0);
    \coordinate (B) at (2,0);
    \coordinate (C) at (1,1.732);
    \draw (A) -- (C) -- (B);
    \fill (A) circle (2pt);
    \fill (B) circle (2pt);
    \fill (C) circle (2pt);
\end{tikzpicture}
\caption{The 2-Chain, \(P_{2}\)}
\end{subfigure}
\hfill
\begin{subfigure}{0.45\textwidth}
\centering
\begin{tikzpicture}
    \coordinate (A) at (0,0);
    \coordinate (B) at (2,0);
    \coordinate (C) at (1,1.732);
    \draw (A) -- (B) -- (C) -- cycle;
    \fill (A) circle (2pt);
    \fill (B) circle (2pt);
    \fill (C) circle (2pt);
\end{tikzpicture}
\caption{The Triangle, \(K_{3}\)}
\end{subfigure}
\caption{ }
\label{toyshapes}
\end{figure}

For the triangle, the indicator function used is \(S_{K_{3}}(x_{1},x_{2},x_{3})=S_{\lambda}(x_{1}-x_{2})S_{\lambda}(x_{2}-x_{3})S_{\lambda}(x_{3}-x_{1})\), so \(N_{K_{3}}\approx|S_{\lambda}||S_{\lambda}^{d-2}|\). Then, we have that \[\Ld_{K_{3}}(f_{1},f_{2},f_{3})\approx\frac{1}{|S_{\lambda}||S_{\lambda}^{d-2}|}\sum_{x_{1},x_{2},x_{3}\in\Z^{d}}f_{1}(x_{1})f_{2}(x_{2})f_{3}(x_{3})S_{\lambda}(x_{1}-x_{2})S_{\lambda}(x_{2}-x_{3})S_{\lambda}(x_{3}-x_{1}).\]
Note that if we take the operator \[A_{tri}(f,g)(x)=\frac{1}{|S_{\lambda}||S_{\lambda}^{d-2}|}\displaystyle\sum_{y,z\in\Z^{d}}f(y)g(z)S_{\lambda}(x-y)S_{\lambda}(y-z)S_{\lambda}(z-x),\] then we can say that 
\[\Ld_{K_{3}}(f_{1},f_{2},f_{3})\approx\langle f_{1},A_{tri}(f_{2},f_{3})\rangle=\langle f_{2},A_{tri}(f_{1},f_{3})\rangle=\langle f_{3},A_{tri}(f_{1},f_{2})\rangle.\]
However, not all graphs have this symmetry throughout. For the 2-chain, the indicator function used is \(S_{P_{2}}=S_{\lambda}(x_{1}-x_{2})S_{\lambda}(x_{2}-x_{3})\), so \(N_{P_{2}}\approx|S_{\lambda}|^{2}\). Then, we have that \[\Ld_{P_{2}}(f_{1},f_{2},f_{3})\approx\frac{1}{|S_{\lambda}|^{2}}\sum_{x_{1},x_{2},x_{3}\in\Z^{d}}f_{1}(x_{1})f_{2}(x_{2})f_{3}(x_{3})S_{\lambda}(x_{1}-x_{2})S_{\lambda}(x_{2}-x_{3}).\]

Consider the operators \[A_{path_{1}}(f,g)(x)=\frac{1}{|S_{\lambda}|^{2}}\displaystyle\sum_{y,z\in\Z^{d}}f(y)g(z)S_{\lambda}(x-y)S_{\lambda}(y-z),\] and \[A_{path_{2}}(f,g)(x)=\frac{1}{|S_{\lambda}|^{2}}\displaystyle\sum_{y,z\in\Z^{d}}f(y)g(z)S_{\lambda}(x-y)S_{\lambda}(x-z).\] We can write \[\Ld_{P_{2}}(f_{1},f_{2},f_{3})\approx\langle f_{1}, A_{path_{1}}(f_{2},f_{3})\rangle=\langle f_{3}, A_{path_{1}}(f_{2},f_{1})\rangle=\langle f_{2},A_{path_{2}}(f_{1},f_{3})\rangle,\] so this form is associated with several multilinear averaging operators. 
\\

Analogous multilinear forms and operators have been studied in the continuous, finite field, and discrete settings. In \cite{IPS22}, Iosevich, Palsson and Sovine showed \(L^{p}\) improving estimates for simplex averaging operators. For example, they found that the triangle averaging operator is bounded in \(L^{\frac{d+1}{d}}\times L^{\frac{d+1}{d}}\) to \(L^{s}\) for \(s\in[\frac{d+1}{2d},1]\) and \(d\geq 2\). This was expanded to a whole class of graphs by Iosevich, Palsson, Wyman, and Zhai \cite{IPWZ25} who showed \(L^{p}\) improving properties for forms based on a regularly realizable graphs. The limitation in these cases is that very little is known about sharpness for the range or what even a conjecture for the range should be.\\

In \cite{BIKP25}, Bhowmik, Iosevich, Koh, and Pham obtained \(L^{p}\)-improving results for multilinear forms based on a collection of graphs with four points in finite fields. In this setting, the \(L^{p}\) improving properties need to be uniform in the size of the field, and therefore look much more similar to the continuous theory than the discrete theory. The authors of the paper asked the following question:
     \begin{question}[Bhowmik-Iosevich-Koh-Pham \cite{BIKP25}]\label{BIKPquestion}
        Suppose that \(G'\) is a subgraph of the graph \(G\) with \(n\) vertices in \(\mathbb{F}_{q}^{d}\). Let \(1\leq p_{i}\leq\infty\), \(1\leq i\leq n\). If the form based on \(G\), \(\Lambda_{G}\), is \(L^{p}\) improving for \(p_{1},...,p_{n}\), is the form \(\Lambda_{G'}\) also \(L^{p}\) improving for \(p_{1},...,p_{n}\)?
    \end{question}

Interestingly, they found that the answer is negative for \(G'=C_{4}\), the diamond shape, and \(G=C_{4+t}\), the diamond with a diagonal. They show that in \(\mathbb{F}_{q}^{2}\), for \((p_{1},p_{2},p_{3},p_{4})=(3/2,\infty,3/2,\infty)\) the diamond is not bounded, but the diamond with a diagonal is. These shapes are shown in Figure \ref{shapes}. \\

In the integer setting, both boundedness properties and \(\ell^{p}\) improving  have been studied for multilinear averaging operators. In \cite{AKP22}, Anderson, Kumchev, and Palsson found boundedness results for the maximal triangle averaging operator in \(\Z^{d}\). This was improved in higher dimensions by Cook, Lyall, and Maygar\cite{CLM21} who extended these results to simplex maximal operators in \(\Z^{d}\). The only \(\ell^{p}\) improving estimates for multilinear averaging operators in the discrete setting are due to Anderson, Kumchev, and Palsson \cite{AKP24} who found \(\ell^{p}\) improving estimates for simplex averaging operators well as \(\ell^{p}\) improving results for the bilinear spherical averaging operator. A comparison between their results for simplex averaging operators and ours can be found in Sections \ref{tri improvement} and \ref{tetr improvement}.

\subsection{Main Results}

Recall that in the integer setting, \(\ell^{p}\)-improving estimates have features different from the \(L^{p}\)-improving estimates in the continuous or finite field settings. Namely, in the integer setting there are two properties that must be considered, the best exponent on \(\lambda\), and the range of \(p_{1},...,p_{k}\) where this best exponent can be guaranteed. In the continuous and finite field settings, the properties that emerge for forms depend highly on the graphs that the forms are based on.

We undertake a systematic study of the \(\ell^{p}\) improving properties of forms based on all possible graphs with 2, 3, and 4 points, as well as paths of any length and \(k-\)simplices in \(\Z^{d}\). These graphs are shown in Figure \ref{shapes}.

\begin{figure}

\centering

\begin{subfigure}{0.3\textwidth}
\centering
\begin{tikzpicture}
    \coordinate (A) at (0,0);
    \coordinate (B) at (2,0);
    \draw (A) -- (B);
    \fill (A) circle (2pt);
    \fill (B) circle (2pt);
\end{tikzpicture}
\caption{The 1-Chain, \(P_{1}\)}
\end{subfigure}
\hfill
\begin{subfigure}{0.3\textwidth}
\centering
\begin{tikzpicture}
    \coordinate (A) at (0,0);
    \coordinate (B) at (2,0);
    \coordinate (C) at (1,1.732);
    \draw (A) -- (C) -- (B);
    \fill (A) circle (2pt);
    \fill (B) circle (2pt);
    \fill (C) circle (2pt);
\end{tikzpicture}
\caption{The 2-Chain, \(P_{2}\)}
\end{subfigure}
\hfill
\begin{subfigure}{0.3\textwidth}
\centering
\begin{tikzpicture}
    \coordinate (A) at (0,0);
    \coordinate (B) at (2,0);
    \coordinate (C) at (1,1.732);
    \draw (A) -- (B) -- (C) -- cycle;
    \fill (A) circle (2pt);
    \fill (B) circle (2pt);
    \fill (C) circle (2pt);
\end{tikzpicture}
\caption{The Triangle, \(K_{3}\)}
\end{subfigure}
\vspace{1em}
\begin{subfigure}{0.3\textwidth}
\centering
\begin{tikzpicture}
    \coordinate (A) at (0,1.3);
    \coordinate (B) at (0,-1.3);
    \coordinate (C) at (-1,0);
    \coordinate (D) at (1,0);
    \draw (A) -- (C) -- (B) -- (D) -- cycle;
    \draw (C) -- (D);
     \draw (A) -- (B);
    \fill (A) circle (2pt);
    \fill (B) circle (2pt);
    \fill (C) circle (2pt);
    \fill (D) circle (2pt);
\end{tikzpicture}
\caption{The Tetrahedron, \(K_{4}\)}
\end{subfigure}
\hfill
\begin{subfigure}{0.3\textwidth}
\centering
\begin{tikzpicture}
    \coordinate (A) at (0,1.3);
    \coordinate (B) at (0,-1.3);
    \coordinate (C) at (-1,0);
    \coordinate (D) at (1,0);
    \draw (A) -- (C) -- (B) -- (D) -- cycle;
    \draw (C) -- (D);
    \fill (A) circle (2pt);
    \fill (B) circle (2pt);
    \fill (C) circle (2pt);
    \fill (D) circle (2pt);
\end{tikzpicture}
\caption{The Diamond with a Diagonal, \(C_{4+t}\)}
\end{subfigure}
\hfill
\begin{subfigure}{0.3\textwidth}
\centering
\begin{tikzpicture}
    \coordinate (top) at (0,1);
    \coordinate (bottom) at (0,-1);
    \coordinate (left) at (-1,0);
    \coordinate (right) at (1,0);
    \draw (top) -- (right) -- (bottom) -- (left) -- cycle;
    \fill (top) circle (2pt);
    \fill (bottom) circle (2pt);
    \fill (left) circle (2pt);
    \fill (right) circle (2pt);
\end{tikzpicture}
\caption{The Diamond, \(C_{4}\)}
\end{subfigure}

\vspace{1em}


\begin{subfigure}{0.3\textwidth}
\centering
\begin{tikzpicture}
    \coordinate (A) at (0,0);
    \coordinate (B) at (2,0);
    \coordinate (C) at (1,1.73);
    \coordinate (D) at (4,0);
    \draw (A) -- (B) -- (C) -- cycle;
    \draw (D) -- (B);
    \fill (A) circle (2pt);
    \fill (B) circle (2pt);
    \fill (C) circle (2pt);
    \fill (D) circle (2pt);
\end{tikzpicture}
\caption{Triangle with a Tail, \(K_{3+t}\)}
\end{subfigure}
\hfill
\begin{subfigure}{0.3\textwidth}
\centering
\begin{tikzpicture}
    \coordinate (center) at (0,0);
    \coordinate (left) at (-1,1.3);
    \coordinate (right) at (1,1.3);
    \coordinate (down) at (0,-1.3);
    \draw (center) -- (left);
    \draw (center) -- (right);
    \draw (center) -- (down);
    \fill (center) circle (2pt);
    \fill (left) circle (2pt);
    \fill (right) circle (2pt);
    \fill (down) circle (2pt);
\end{tikzpicture}
\caption{The Y Shape, \(Y\)}
\end{subfigure}
\hfill
\begin{subfigure}{0.3\textwidth}
\centering
\begin{tikzpicture}
  \def\len{1}
  \coordinate (P1) at (0, 0);
  \coordinate (P2) at (\len*0.8, \len*0.6);
  \coordinate (P3) at (\len*1.4, \len*1.5);
  \coordinate (P4) at (\len*2.4, \len*1.2);
  \coordinate (P5) at (\len*3.2, \len*2.0);
  \fill (P1) circle (2pt); 
  \fill (P2) circle (2pt); 
  \fill (P3) circle (2pt);
  \fill (P4) circle (2pt); 
  \fill (P5) circle (2pt); 
  \draw (P1) -- (P2) -- (P3) -- (P4) -- (P5);
\end{tikzpicture}
\caption{The k-Chain, \(P_{k}\)}
\end{subfigure}

\caption{The Graphs}
    \label{shapes}
\end{figure}


A surprising feature that emerges in the integer setting is that the best exponent on \(\lambda\) does not seem to depend on the edges of the graphs, rather, it only depends on the number of vertices of that graph.
    \begin{conjecture}\label{conjecture}
        
    For a form \(\Lambda_{G}(f_{1},...,f_{k})\) based on a graph, \(G\) with \(k\) points and for a high enough dimension \(d\),
    \begin{equation}\label{conjeqn}
        \Lambda_{G}(f_{1},...,f_{k})\lesssim\|f_{1}\|_{p_{1}}...\|f_{k}\|_{p_{k}}\lambda^{\frac{d}{2}\left(1-\sum_{i=1}^{k}\frac{1}{p_{i}}\right)}
    \end{equation}
    for a range of \(1<p_{1},...,p_{k}<\infty\).
    In each case, the exponent is sharp. 
   \end{conjecture}

We prove the following Theorem for the graphs shown in Figure \ref{shapes}. 

\begin{theorem}\label{MainTheorem}
    Conjecture \ref{conjecture} is true for the graphs \(K_{3}\), \(K_{4}\), \(C_{4+t}\), \(C_{4}\), \(K_{3+t}\), \(Y\), \(P_{k}\), and \(K_{k}\) for \(k\geq 1\).

\end{theorem}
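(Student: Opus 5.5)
The plan has two halves: an upper bound (\ref{conjeqn}) for each listed graph on a nonempty open range of exponents, and a matching lower-bound example showing the exponent $\frac{d}{2}(1-\sum 1/p_i)$ cannot be improved.

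\emph{Sharpness.} This half is uniform over all graphs. I would take every $f_i=1_{B}$, where $B$ is a ball of radius $c\sqrt{\lambda}$ with $c$ small. Then $\|f_i\|_{p_i}\approx \lambda^{\frac{d}{2p_i}}$. For each $x_1\in B$, a positive proportion of the configurations in $S_G(x_1)$ have all their points inside a ball of radius $C\sqrt\lambda$, so after enlarging the support of $f_2,\dots,f_k$ by a constant factor we get $\Lambda_G\gtrsim |B|\approx\lambda^{d/2}$. Comparing the two sides forces $\eta\ge \frac d2(1-\sum 1/p_i)$. The only input needed is a lower bound on the number of configurations based at $x_1$ lying in a ball of radius $\approx\sqrt\lambda$, relative to $N_G$. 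I expect this to follow from the asymptotics for $N_G$ in Section \ref{size section}, since every configuration automatically lies in a ball of radius $k\sqrt\lambda$.

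\emph{Upper bounds.} I would argue by induction on the number of vertices, peeling off one vertex at a time, together with a model estimate.
\begin{enumerate}
\item For $P_1$, the bound is Theorem \ref{hklthm} combined with H\"older, exactly as in the introduction.
\item For a graph $G$ and a vertex $v_k$, write $\Lambda_G=\frac{1}{N_G}\sum_{x_1,\dots,x_{k-1}} S_{G-v_k}(x_1,\dots,x_{k-1})\prod_{i<k}f_i(x_i)\,T f_k(x_1,\dots,x_{k-1})$. Here $Tf_k$ is the average of $f_k$ over the sphere of dimension $d-\deg(v_k)$ cut out by the spheres around the neighbours of $v_k$.
\item \emph{Leaf vertices} (the chains $P_k$, $Y$, the tail of $K_{3+t}$): $Tf_k=A_\lambda f_k(x_j)$. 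Fold it into $f_j$ by H\"older, $\|f_jA_\lambda f_k\|_{r}\le\|f_j\|_{p_j}\|A_\lambda f_k\|_{q}$, then apply (\ref{spheq}). This gains $\lambda^{\frac d2(\frac1q-\frac1{p_k})}$ and reduces to $G-v_k$ with exponent $\frac1{r}=\frac1{p_j}+\frac1q$. The exponents then telescope: the total power is $\frac d2(1-\sum_{i<k}\frac1{p_i}-\frac1q)+\frac d2(\frac1q-\frac1{p_k})$, which is the target. A nonempty range survives provided each intermediate $(1/p,1/q)$ lies in the triangle of Figure \ref{sphfig}.
\item \emph{Vertices of higher degree} (simplices $K_k$, $C_4$, $C_{4+t}$, $K_{3+t}$): I would instead use the crude bound $\|Tf_k\|_\infty\le\|f_k\|_{p_k}\,|S^{d-\deg}_\lambda|^{-1/p_k}$. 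This costs only $\lambda^{-\frac{d-\deg-1}{2p_k}}$ rather than $\lambda^{-\frac{d}{2p_k}}$. The deficit is offset because the normalization ratio $N_G/N_{G-v_k}\approx|S^{d-\deg}_\lambda|$ and $f_k$ can be held in $\ell^{p_k}$ with $p_k$ close to $\infty$.
\item To recover the sharp exponent rather than a lossy one, I would interpolate, multilinearly over the $p_i$, between two kinds of endpoint. The first is the sharp-exponent estimates obtained by taking one or two $p_i$ in the spherical range and the rest equal to $\infty$; these reduce to (\ref{spheq}) through $\Lambda_G\le\Lambda_{P_1}$-type bounds, using $S_G\le S_\lambda(x_i-x_j)$ and normalization counts. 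The second is the estimates with all $p_i=1$ except one. Since the target exponent is affine in $(1/p_i)$, interpolating sharp endpoints yields sharp interior estimates.
\end{enumerate}

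\emph{Main obstacle.} The key difficulty is handling $K_k$ and $C_4$, where no vertex is a leaf. The normalization $N_G$ is then much smaller than the product of sphere sizes, and the bound $S_G\le S_\lambda(x_1-x_2)$ loses the factor $N_{P_1}/N_G$, which grows with $\lambda$. For these graphs I would first try to produce sharp endpoints by Cauchy--Schwarz in the shared variables. For $C_4$, for instance, $\Lambda_{C_4}=\sum_{x_1,x_3}f_1f_3\,(A f_2\cdot A f_4)$-type expressions can be bounded by the $P_2$ form squared. For simplices, I would fall back on the Anderson--Kumchev--Palsson simplex estimates \cite{AKP24}, rescaled into form language.
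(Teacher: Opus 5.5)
Your sharpness argument and your leaf-folding step are sound and match what the paper does for $P_2$, $P_k$, $Y$ and $K_{3+t}$. The gap is in the non-leaf graphs.

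\textbf{Simplices.} Your step 4 taken at $p_k=\infty$ is already the right argument, and it is the paper's. For every $K_{k-1}$-configuration $x_1,\dots,x_{k-1}$, the number of points $x_k$ completing it is $\approx|S_\lambda^{d-k+1}|$, uniformly over configurations; this is where the dimension restriction enters. That count is exactly $N_{K_k}/N_{K_{k-1}}$, so $\Lambda_{K_k}(f_1,\dots,f_k)\lesssim\|f_k\|_\infty\Lambda_{K_{k-1}}(f_1,\dots,f_{k-1})$ with no loss in $\lambda$. Iterating down to $\Lambda_{P_1}$, permuting vertices and interpolating gives $K_3$, $K_4$ and $K_k$. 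The same works for $C_{4+t}$: the two neighbours of $x_4$ are joined by the diagonal, so their common-neighbour count is uniformly $\approx|S_\lambda^{d-2}|$. The loss you describe in your last paragraph appears only if you discard the remaining edge constraints instead of counting completions. Your fallback does not work: the Anderson--Kumchev--Palsson simplex bounds are off by a positive power of $\lambda$. For the triangle they give $\lambda^{1+\frac d2-\frac dp}$, whereas the sharp exponent is $\frac d2-\frac dp$. So they cannot establish the sharp exponent the conjecture demands, and $K_{3+t}$ and $C_{4+t}$, which you reduce to $K_3$, inherit the problem.

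\textbf{The diamond $C_4$.} The obstruction is genuine but different from the one you name, and Cauchy--Schwarz does not remove it. Step 4 needs the completion count to be $\lesssim N_G/N_{G-v_k}$ for \emph{every} configuration of $G-v_k$; a correct ratio of normalizations is not enough. For $C_4$ minus $x_4$, the sum $\sum_{x_4}S_\lambda(x_3-x_4)S_\lambda(x_4-x_1)$ is $\lesssim|S_\lambda^{d-2}|$ when $x_1\neq x_3$, but equals $|S_\lambda|$ on the diagonal $x_1=x_3$.

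The paper splits the diagonal off:
\begin{itemize}
\item the off-diagonal part is $\lesssim\|f_4\|_\infty\Lambda_{P_2}(f_1,f_2,f_3)$;
\item the diagonal part is $\lesssim\|f_4\|_\infty|S_\lambda^{d-2}|^{-1}\Lambda_{P_1}(f_2,f_1f_3)$.
\end{itemize}
By H\"older, the diagonal part requires $(\frac1{p_2},\frac1{p_1}+\frac1{p_3})$ to lie in the region of Theorem~\ref{Lthm}. This extra condition is what shrinks the $C_4$ region, and coinciding opposite vertices are exactly what the counterexamples of Section~\ref{diamondquestion} exploit. Cauchy--Schwarz in $(x_1,x_3)$ instead gives $\Lambda_{C_4}(f_1,f_2,f_3,f_2)^{1/2}\Lambda_{C_4}(f_1,f_4,f_3,f_4)^{1/2}$. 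These are again $C_4$ forms, not $P_2$ forms, and they carry the same diagonal.

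\textbf{Step 5 endpoints.} The trivial endpoints are one $p_i=1$ and the rest $\infty$, where $\Lambda_G\le\|f_i\|_1\prod_{j\neq i}\|f_j\|_\infty$. ``All $p_i=1$ except one'' is false with the sharp exponent. For example, for $K_3$ at $(1,1,0)$, take $f_1=1_{\{0\}}$, $f_2=1_{\{y\}}$ with $|y|^2=\lambda$, and $f_3\equiv1$. Then $\Lambda_{K_3}\approx|S_\lambda|^{-1}\approx\lambda^{-\frac{d-2}{2}}\gg\lambda^{-\frac d2}$.
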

\begin{remark}
    Theorem \ref{MainTheorem} is the accumulation of Theorem \ref{Lthm} for \(P_{1}\), Theorem \ref{2chain} for \(P_{2}\), Theorem \ref{trithm} for \(K_{3}\), Theorem \ref{tetrtheorem} for \(K_{4}\), Theorem \ref{Diamond+thm} for \(C_{4+t}\), Theorem \ref{Diamondthm} for \(C_{4}\), Theorem \ref{Tttheorem} for \(K_{3+t}\), Theorem \ref{Ytheorem} for Y, Theorem \ref{kchainthm} for a general \(P_{k}\), and Theorem \ref{simtheorem} for a general \(K_{k}\). The restrictions on \(p_{1},...,p_{k}\) and \(d\) vary based on each shape and can be found in each theorem.
\end{remark}
\begin{remark}
    In each of these theorems, the exponent is shown to be sharp by taking each of the functions to be equal to a ball of radius \(\sqrt{\lambda}\). While our theorems establish the sharpness of the exponent on \(\lambda\) for a range of \(p_{1}, ..., p_{k}\), we do not in general have sharpness in the range of \( p_{1}, ..., p_{k}\). We discuss such sharpness in the individual sections when relevant.\\
\end{remark}
\begin{remark}
    The only other multilinear \(\ell^{p}\) improving estimated that existed up until this point are by Anderson, Kumchev, and Palsson \cite{AKP24} for the triangle averaging operator and operators based on other \(k\)-simplices. In both of these cases, we have improvements on their results. For instance, in the case of the triangle, they found that for \(d\geq 7\) and \(\frac{d+1}{d-1}<p<\frac{2d}{d+2}\), the triangle averaging operator satisfies the bound \[\|A_{tri}(f,g)\|_{1}\lesssim\lambda^{1+\frac{d}{2}-\frac{d}{p}}\\f\|_{p}\|g\|_{p}.\] If we apply Theorem \ref{trithm} in this setting, letting \(p_{1}=p_{2}=p\) and \(p_{3}=\infty\), we get a decay term of \(\lambda^{\frac{d}{2}-\frac{d}{p}}\), which we can see is an improvement on the results in \cite{AKP24}. A similar process yields improvement in the case of the tetrahedron as well. The improvement for the tetrahedron is discussed more explicitly in Section \ref{tetr improvement}.
\end{remark}

In order to have good asymptotics for \(N_{G}\), we have to place some restrictions on the dimension, \(d\). In general, we require \(d\geq 5\), as the process of finding \(\ell^{p}\) improving estimates for these forms relies heavily on being able to approximate the number of integer points on a \(d-1\) dimensional sphere. If there is a triangle shape embedded into the graph, then we need to be able to estimate the points on a \(d-2\) dimensional sphere. As shown in \cite{AKP22}, requiring that \(d\geq 7\) will be sufficient. By similar methods, we can say that in the case of the tetrahedron where we need to estimate the number of points on a \(d-3\) dimensional circle, taking \(d\geq 9\) will be sufficient. In the case of a general \(k-\)simplex, a \(d-k\) dimensional circle is required, so we require that \(d>2k+1\).\\

Additionally, in the \(\Z^{d}\) setting, many \(\lambda\in\R^{d}\) will be such that no points in \(\Z^{d}\) lie on a sphere of radius \(\lambda\). For example, in the case of \(P_{2}\), we need \(\lambda\) to be such that \(\lambda\in\N\). As shown in \cite{AKP22} in the case of the triangle, we need not only that \(\lambda\in\N\), but also that \(\lambda\) is even. For a graph, \(G\), we will let \(\mathcal{R}_{G}\) be the set of all \(\lambda\) for which \(G\) is nondegenerate.
\\

We can also ask a question analogous to Question \ref{BIKPquestion}:
\begin{question}\label{mainquestion}
        Suppose that \(G'\) is a subgraph of the graph \(G\) with \(k\) vertices in \(\Z^{d}\). Let \(1\leq p_{i}\leq\infty\), \(1\leq i\leq k\). If the form based on \(G\), \(\Lambda_{G}\), is such that \[\Lambda_{G}\lesssim \lambda^{\frac{d}{2}(1-\sum_{i=1}^{k}\frac{1}{p_{i}})}\|f_{1}\|_{p_{1}}...\|f_{k}\|_{p_{k}}\] for \(p_{1},...,p_{k}\), is the form \(\Lambda_{G'}\) also such that \[\Lambda_{G}\lesssim \lambda^{\frac{d}{2}(1-\sum_{i=1}^{k}\frac{1}{p_{i}})}\|f_{1}\|_{p_{1}}...\|f_{k}\|_{p_{k}}\] for \(p_{1},...,p_{k}\)?
    \end{question}
In the finite field setting, it was shown through an example that the answer to this question is negative, but in the continuous setting, no such example has been found, despite a systematic study in \cite{IPWZ25}.
In the integer setting, as in the finite field case, we show that the answer is negative. There exists a graph \(G\), a subgraph \(G'\), and \(p_{1},\dots,p_{k}\) where Equation \ref{conjeqn} holds with \(p_{1},\dots,p_{k}\) for \(G\), but does not hold with \(p_{1},\dots,p_{k}\) for \(G'\). We verify this in Section \ref{diamondquestion} with the same graphs that Bhowmik, Iosevich, Koh, and Pham used to show that the answer to Question \ref{BIKPquestion} is negative. That is, taking \(G=C_{4+t}\) and \(G'=C_{4}\). We found two additional examples showing the the answer to Question \ref{mainquestion} is negative which are discussed in Sections \ref{diamond+question} and \ref{diamondquestion} and compiled in the following theorem.

\begin{theorem}\label{subgraph theorem}
    The graphs \(C_{4+t}\) and \(C_{4}\) are subgraphs of \(K_{4}\), and \(\Lambda_{K_{4}}\) is such that at the point \(\left(\frac{1}{p_{1}},\frac{1}{p_{2}},\frac{1}{p_{3}},\frac{1}{p_{4}}\right)=\left(0,\frac{d-1}{d+1}-\epsilon,0,\frac{d-1}{d+1}-\epsilon\right)\) for some \(\epsilon>0\) small, \ref{conjeqn} holds.
    However, \(\Lambda_{C_{4+t}}\) is such that as \(\lambda\to\infty\)
    \begin{equation}
        \Lambda_{C_{4+t}}(S_{\lambda},1_{\{0\}},S_{\lambda},1_{\{0\}})\gtrsim\lambda^{\frac{d}{2}\left(1-\frac{2(d-1)}{d+1}+2\epsilon\right)}\|S_{\lambda}\|_{(\frac{d-1}{d+1}-\epsilon)^{-1}}^{2}\|1_{\{0\}}\|_{\infty}^{2},
    \end{equation} 
    and \(\Lambda_{C_{4}}\) is such that as \(\lambda\to\infty\),
    \begin{equation}
        \Lambda_{C_{4}}(S_{\lambda},1_{\{0\}},S_{\lambda},1_{\{0\}})\gtrsim\lambda^{\frac{d}{2}\left(1-\frac{2(d-1)}{d+1}+2\epsilon\right)}\|S_{\lambda}\|_{(\frac{d-1}{d+1}-\epsilon)^{-1}}^{2}\|1_{\{0\}}\|_{\infty}^{2}.
    \end{equation}
\end{theorem}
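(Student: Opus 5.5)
The statement has two parts: an upper bound for \(\Lambda_{K_{4}}\), and two lower bounds that come from explicit test functions. I would handle them separately.

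\textbf{Upper bound for \(K_{4}\).} I would not invoke the later tetrahedron theorem. The point in question has \(p_{1}=p_{3}=\infty\), and in that case a direct reduction to the 1-chain should work. Bound \(|f_{1}|\le\|f_{1}\|_{\infty}\) and \(|f_{3}|\le\|f_{3}\|_{\infty}\). Then for fixed \(x_{2},x_{4}\) with \(|x_{2}-x_{4}|^{2}=\lambda\), count the pairs \((x_{1},x_{3})\) completing a tetrahedron.
\begin{itemize}
\item The vertex \(x_{1}\) lies on the intersection of two spheres, a \((d-2)\)-dimensional sphere.
\item The vertex \(x_{3}\) lies on a \((d-3)\)-dimensional sphere.
\end{itemize}
By the lattice point asymptotics of Section \ref{size section} (valid for \(d\ge 9\), \(\lambda\in\mathcal{R}_{K_{4}}\)), this count is \(\approx|S^{d-2}_{\lambda}||S^{d-3}_{\lambda}|\). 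Since \(N_{K_{4}}\approx|S_{\lambda}||S^{d-2}_{\lambda}||S^{d-3}_{\lambda}|\), we get
\[
\Lambda_{K_{4}}(f_{1},f_{2},f_{3},f_{4})\lesssim\|f_{1}\|_{\infty}\|f_{3}\|_{\infty}\,\Lambda_{P_{1}}(|f_{2}|,|f_{4}|)\approx\|f_{1}\|_{\infty}\|f_{3}\|_{\infty}\langle |f_{2}|,A_{\lambda}|f_{4}|\rangle .
\]
Next apply H\"older and Theorem \ref{hklthm} with \(p=p_{2}=p_{4}\), where \(\frac1p=\frac{d-1}{d+1}-\epsilon\). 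This lies in the admissible range \(\frac{d+1}{d-1}<p\le 2\) for small \(\epsilon\). The resulting power is \(\lambda^{\frac d2(1-\frac2p)}\), which is exactly \eqref{conjeqn} because \(\frac1{p_{1}}=\frac1{p_{3}}=0\).

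\textbf{Lower bounds.} Insert \(f_{1}=f_{3}=S_{\lambda}\) (the indicator of the sphere) and \(f_{2}=f_{4}=1_{\{0\}}\). This forces \(x_{2}=x_{4}=0\), so the vertices \(2\) and \(4\) must be non-adjacent in both \(C_{4}\) and \(C_{4+t}\). I would fix the labeling accordingly: the diagonal of \(C_{4+t}\) joins \(x_{1}\) and \(x_{3}\). The numerator is then a pure lattice count.
\begin{itemize}
\item For \(C_{4}\): the number of pairs \(x_{1},x_{3}\in S_{\lambda}\), which is \(|S_{\lambda}|^{2}\).
\item For \(C_{4+t}\): the number of pairs \(x_{1},x_{3}\in S_{\lambda}\) with \(|x_{1}-x_{3}|^{2}=\lambda\), which is \(\approx|S_{\lambda}||S^{d-2}_{\lambda}|\).
\end{itemize}
The normalizations are computed the same way:
\[
N_{C_{4}}\approx|S_{\lambda}|^{2}|S^{d-2}_{\lambda}|,\qquad N_{C_{4+t}}\approx|S_{\lambda}||S^{d-2}_{\lambda}|^{2}.
\]
On the right-hand side, \(\|S_{\lambda}\|_{r}=|S_{\lambda}|^{1/r}\). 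I would then substitute \(|S_{\lambda}|\approx\lambda^{\frac d2-1}\) and \(|S^{d-2}_{\lambda}|\approx\lambda^{\frac{d-3}{2}}\) and compare powers of \(\lambda\) on the two sides as \(\lambda\to\infty\).

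\textbf{Main obstacle.} The delicate step is this final exponent comparison. Which side wins depends sharply on the normalization convention, i.e.\ whether the test functions are indicators or normalized measures \(\sigma_{\lambda}\), and on the exact labeling of \(C_{4}\) and \(C_{4+t}\). My own rough check with indicator test functions suggests the powers may not line up in the claimed direction, so the convention has to be fixed precisely before doing the bookkeeping. If indicators do not give a ratio that grows with \(\lambda\), I would first try \(f_{1}=f_{3}=\sigma_{\lambda}\), and then try relabeling which pair of vertices receives the point masses.

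Once the convention is fixed, the lattice point counts are routine given Section \ref{size section} and the arguments in \cite{AKP22}. In particular, the pairs of sphere points at mutual distance \(\sqrt\lambda\) form a \((d-2)\)-sphere count uniformly in the base point for \(d\ge 7\).
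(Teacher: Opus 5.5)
Your \(K_{4}\) upper bound is fine and is essentially the paper's argument. The paper bounds \(\Lambda_{K_{4}}\) by \(\|f_{1}\|_{\infty}\Lambda_{K_{3}}\), then \(\Lambda_{K_{3}}\) by \(\|f_{3}\|_{\infty}\Lambda_{P_{1}}\), and applies Theorem \ref{hklthm}; you do the two lattice counts in one step.

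The gap is in the lower bounds, which you never establish. You stop at ``the powers may not line up,'' and the fixes you propose cannot help. Replacing \(S_{\lambda}\) by \(\sigma_{\lambda}\) changes nothing, since both sides are homogeneous of degree one in each \(f_{i}\). Any other placement of the two point masses puts them on an edge, and then the form vanishes because \(S_{\lambda}(0)=0\). What actually breaks your bookkeeping is the value \(|S^{d-2}_{\lambda}|\approx\lambda^{\frac{d-3}{2}}\), which is wrong. The set \(\{x:|x|^{2}=|x-y|^{2}=\lambda\}\) with \(|y|^{2}=\lambda\) is cut out of the \(\approx\lambda^{d/2}\) lattice points of a ball of radius \(\sqrt{\lambda}\) by two conditions, \(|x|^{2}=\lambda\) and \(2x\cdot y=\lambda\). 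Each costs a factor \(\lambda\), so \(|S^{d-2}_{\lambda}|\approx\lambda^{\frac{d-4}{2}}\). This is consistent with \(N_{K_{3}}\approx\lambda^{d-3}\) and \(N_{C_{4+t}}\approx\lambda^{\frac{3d}{2}-5}\) in Section \ref{size section}. Your numerators and normalizations are right, so both forms are \(\approx 1/|S^{d-2}_{\lambda}|\approx\lambda^{-\frac{d-4}{2}}\). With your value they would come out as \(\lambda^{-\frac{d-3}{2}}\), which wipes out the margin computed below even at the correct point.

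The exponents must also be attached as at the \(K_{4}\) point \(\left(0,\frac{d-1}{d+1}-\epsilon,0,\frac{d-1}{d+1}-\epsilon\right)\). The slots holding \(S_{\lambda}\) get \(p_{1}=p_{3}=\infty\), and the slots holding \(1_{\{0\}}\) get \(\frac{1}{p_{2}}=\frac{1}{p_{4}}=\frac{d-1}{d+1}-\epsilon\). This is how the paper argues. The norm subscripts in the displayed inequalities should therefore be read as \(\|S_{\lambda}\|_{\infty}^{2}\|1_{\{0\}}\|_{(\frac{d-1}{d+1}-\epsilon)^{-1}}^{2}=1\). Your suspicion about the literal reading is justified. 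With \(S_{\lambda}\) measured in \(\ell^{(\frac{d-1}{d+1}-\epsilon)^{-1}}\), the right side is \(\approx\lambda^{\frac{d}{2}-\frac{2(d-1)}{d+1}+2\epsilon}\), far larger than the left side. That point also lies in the region of Theorem \ref{Diamond+thm}, since \(x_{1}x_{3}\) is an edge of \(C_{4+t}\). At the correct point,
\[
-\frac{d-4}{2}-\frac{d}{2}\left(1-\frac{2(d-1)}{d+1}+2\epsilon\right)=\frac{2}{d+1}-d\epsilon,
\]
so the ratio of the left side to the right side grows like \(\lambda^{\frac{2}{d+1}-d\epsilon}\to\infty\) once \(\epsilon<\frac{2}{d(d+1)}\). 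This gives both displayed lower bounds. It shows that \eqref{conjeqn} fails for \(C_{4+t}\) and \(C_{4}\) at a point where it holds for \(K_{4}\).
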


In Section \ref{1 chain section} we discuss the properties of \(\Lambda_{P_{1}}\). In Section \ref{2 chain section} we discuss the properties of \(\Lambda_{P_{2}}\). In Section \ref{tri section} we discuss the properties of \(\Lambda_{K_{3}}\). In Section \ref{tetr section} we discuss the properties of \(\Lambda_{K_{4}}\). In Section \ref{Diamond+ section} we discuss the properties of \(\Lambda_{C_{4+t}}\) and address one example to show that the answer to Question \ref{mainquestion} is negative. In Section \ref{Diamond section} we discuss the properties of \(\Ld_{C_{4}}\) and address two more examples to show that the answer to Question \ref{mainquestion} is negative. In Section \ref{Tt section} we discuss the properties of \(\Ld_{K_{3+t}}\). In Section \ref{y section}, we discuss the properties of \(\Ld_{Y}\). In Section \ref{chain section} we discuss the properties of \(\Ld_{P_{k}}\) for \(k\geq3\).  Finally, in Section \ref{sim section} we discuss the properties of \(\Ld_{K_{k}}\) for \(k\geq5\).

\section{On \(N_{G}(\lambda)\)}\label{size section}
    We use the shorthand, \(|S_{\lambda}|\) to represent \(N_{\lambda}\), but we must note that this is not perfectly well defined, as there is some variance in the number of points we have on any particular sphere of a fixed radius. However, we know that the number of points on a sphere grows like \(\lambda^{\frac{d-2}{2}}\). We use similar shorthand for lower dimensional spheres.\\
    
    As with \(|S_{\lambda}|\), for a distance graph, \(G\), \(|S_{G}|\) is not necessarily well defined, in the sense that there is always some variance in the number of points in the configuration given by \(G\). However, we can similarly say that the number of points will grow like a power of \(\lambda\).\\

    For a simplex, \(K_{k}\), following \cite{CLM21}, we note by the work of Siegel \cite{S44}, Raghavan \cite{R59}, and Kitaoke \cite{K86} that \[N_{K_{k}}\approx\lambda^{\frac{d(k-1)-k(k-1)}{2}}\approx\prod_{i=1}^{k}|S_{\lambda}^{d-i}|.\]

    For a chain of length \(k\), we can approximate \(N_{P_{k}}\) by seeing that for some \(\lambda\), and \(x_{1}\in\Z^{d}\),
    \begin{align*}
        N_{P_{k}}&=\sum_{x_{2},...,x_{k+1}\in\Z^{d}}1_{S_{P_{k}}}(x_{1},x_{2},...,x_{k+1})\\
        &=\sum_{x_{2},...,x_{k+1}\in\Z^{d}}\prod_{i=1}^{k}S_{\lambda}(x_{i}-x_{i+1})\\
        &=\sum_{x_{2},...,x_{k}\in\Z^{d}}\prod_{i=1}^{k-1}S_{\lambda}(x_{i}-x_{i+1})\sum_{x_{k+1}\in\Z^{d}}S_{\lambda}(x_{k}-x_{k+1})\\
        &\approx |S_{\lambda}|\sum_{x_{2},...,x_{k}\in\Z^{d}}\prod_{i=1}^{k-1}S_{\lambda}(x_{i}-x_{i+1})\\
        &\quad\vdots\\
        &\approx |S_{\lambda}|^{k}\approx\lambda^{\frac{k(d-2)}{2}}.
    \end{align*}
    By the same process, we get that \(N_{C_{4+t}}\approx|S_{\lambda}||S_{\lambda}^{d-2}|^{2}\approx\lambda^{\frac{3d}{2}-5}\), \(N_{Y}\approx|S_{\lambda}|^{3}\approx\lambda^{\frac{3(d-2)}{2}}\), and \(N_{K_{3+t}}\approx|S_{K_{3}}||S_{\lambda}|\approx\lambda^{\frac{3d}{2}-4}\).\\

    The final graph to consider is \(C_{4}\). We know that 
    \begin{align*}
        N_{C_{4}}&=\sum_{x_{2},x_{3},x_{4}\in\Z^{d}}S_{\lambda}(x_{1}-x_{2})S_{\lambda}(x_{2}-x_{3})S_{\lambda}(x_{3}-x_{4})S_{\lambda}(x_{4}-x_{1})\\
        &=\sum_{x_{2},x_{4}\in\Z^{d}}S_{\lambda}(x_{1}-x_{2})S_{\lambda}(x_{1}-x_{4})\sum_{x_{3}\in\Z^{d}}S_{\lambda}(x_{3}-x_{4})S_{\lambda}(x_{2}-x_{3})\\
        &\lesssim|S_{\lambda}^{d-2}|\sum_{x_{2},x_{4}\in\Z^{d}}S_{\lambda}(x_{1}-x_{2})S_{\lambda}(x_{1}-x_{4})\\
        &=|S_{\lambda}^{d-2}|\sum_{x_{2}\in\Z^{d}}S_{\lambda}(x_{1}-x_{2})S_{\lambda}\sum_{x_{4}\in\Z^{d}}S_{\lambda}(x_{1}-x_{4})\\
        &\approx |S_{\lambda}^{d-2}||S_{\lambda}|^{2}\approx\lambda^{\frac{3d}{2}-4}.
    \end{align*}
    This is an overestimate, as it could be that \(x_{2}\) and \(x_{4}\) are \(2\lambda\) apart, or nearly that, in which case one might expect that \(\sum_{x_{3}\in\Z^{d}}S_{\lambda}(x_{3}-x_{4})S_{\lambda}(x_{2}-x_{3})\) is much less than \(|S_{\lambda}^{d-2}|\). \\

   To establish that \(N_{C_{4}}\gtrsim|S_{\lambda}|^{2}|S_{\lambda}^{d-2}|\), we consider a \(d-1\) dimensional sphere of radius \(\lambda\) centered at \(x_{1}\) in \(\Z^{d}\) where \(d\geq7\). Then, we know that \(x_{2}\) and \(x_{4}\) must lie on that sphere. Now, we restrict our choices for to only the points on the sphere whose last coordinates are between \(\frac{1}{2}\lambda\) and \(\frac{\sqrt{3}}{2}\lambda\). We will call this smaller region of the sphere \(S_{\lambda}'\). \(S_{\lambda}'\) is \(\frac{1}{6}\) of the whole sphere, and thus due to symmetry \(|S_{\lambda'}|\approx\frac{1}{6}|S_{\lambda}|\).\\
    Then,

    \begin{align*}
        N_{C_{4}}&=\sum_{x_{2},x_{3},x_{4}\in\Z^{d}}S_{\lambda}(x_{1}-x_{2})S_{\lambda}(x_{2}-x_{3})S_{\lambda}(x_{3}-x_{4})S_{\lambda}(x_{4}-x_{1})\\
        &=\sum_{x_{2},x_{4}\in\Z^{d}}S_{\lambda}(x_{1}-x_{2})S_{\lambda}(x_{1}-x_{4})\sum_{x_{3}\in\Z^{d}}S_{\lambda}(x_{3}-x_{4})S_{\lambda}(x_{2}-x_{3})\\
        &\geq \sum_{x_{2},x_{4}\in\Z^{d}}S_{\lambda}'(x_{1}-x_{2})S_{\lambda}'(x_{1}-x_{4})\sum_{x_{3}\in\Z^{d}}S_{\lambda}(x_{3}-x_{4})S_{\lambda}(x_{2}-x_{3})
    \end{align*}

    We know that \(\displaystyle \sum_{x_{3}\in\Z^{d}}S_{\lambda}(x_{3}-x_{4})S_{\lambda}(x_{2}-x_{3})\) is a \(d-2\) dimensional sphere of some radius \(\lambda'\leq\lambda\). We know that \(\lambda'\) is smallest when \(x_{2}\) and \(x_{4}\) are the farthest apart. Given that \(x_{2},x_{4}\in S_{\lambda}'\), the largest possible distance between them is \(\sqrt{3}\lambda\), in which case we have that \(\lambda'=\frac{1}{2}\). Thus, 
    \begin{align*}
        N_{C_{4}}&\geq \sum_{x_{2},x_{4}\in\Z^{d}}S_{\lambda}'(x_{1}-x_{2})S_{\lambda}'(x_{1}-x_{4})\sum_{x_{3}\in\Z^{d}}S_{\lambda}(x_{3}-x_{4})S_{\lambda}(x_{2}-x_{3})\\
        &\gtrsim|S_{\frac{1}{2}\lambda}^{d-2}|\sum_{x_{2},x_{4}\in\Z^{d}}S_{\lambda}'(x_{1}-x_{2})S_{\lambda}'(x_{1}-x_{4})\\
        &\approx |S_{\frac{1}{2}\lambda}^{d-2}||S_{\lambda}'|^{2}\\
        &\approx |S_{\lambda}^{d-2}||S_{\lambda}|^{2}.
    \end{align*}
    Thus, we have that \(|S_{\lambda}^{d-2}||S_{\lambda}|^{2}\lesssim N_{C_{4}}\lesssim |S_{\lambda}^{d-2}||S_{\lambda}|^{2}\).


\section{The 1-Chain, \(P_{1}\)}\label{1 chain section}

Define \[\Ld_{P_{1}}(f_{1},f_{2})=\frac{1}{N_{P_{1}}}\sum_{x_{1},x_{2}\in\Z^{d}}f_{1}(x_{1})f_{2}(x_{2})S_{\lambda}(x_{1}-x_{2}).\]

\begin{theorem}\label{Lthm}
    If \(d\geq 5\) and \(1<p_{1},p_{2}<\infty\) with \(\frac{1}{p_{1}}+\frac{1}{p_{2}}>1\), then there exist constants \(C_{p_{1},p_{2}}\) such that for all \(\lambda\in\mathcal{R}_{P_{1}}\) we have the \(\ell^{p}\) improving inequality
    \begin{equation}\label{Lequation}\Ld_{P_{1}}(f_{1},f_{2})\leq C_{p_{1},p_{2}}\lambda^{\frac{d}{2}(1-\frac{1}{p_{1}}-\frac{1}{p_{2}})}\|f_{1}\|_{p_{1}}\|f_{2}\|_{p_{2}}\end{equation}
    provided \((\frac{1}{p_{1}},\frac{1}{p_{2}})\) is in the convex hull of the points:
    \begin{itemize}
        \item \((1,0)\)
        \item \((0,1)\)
        \item \((\frac{d-1}{d+1},\frac{d-1}{d+1})\)
    \end{itemize}
\end{theorem}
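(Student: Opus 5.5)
The plan is to reduce \(\Ld_{P_{1}}\) to the spherical averaging operator, as sketched in the introduction, and then use multilinear (here bilinear) interpolation between three vertex estimates. We write
\[\Ld_{P_{1}}(f_{1},f_{2})\approx\langle f_{1},A_{\lambda}f_{2}\rangle,\]
since \(N_{P_{1}}\approx|S_{\lambda}|\). We may assume \(f_{1},f_{2}\geq 0\), because the form is positive. The estimate (\ref{Lequation}) then holds at \((\frac{1}{p_{1}},\frac{1}{p_{2}})\) as soon as the operator bound
\[\|A_{\lambda}f_{2}\|_{p_{1}'}\lesssim\lambda^{\frac{d}{2}(\frac{1}{p_{1}'}-\frac{1}{p_{2}})}\|f_{2}\|_{p_{2}}\]
holds, because \(\frac{1}{p_{1}'}-\frac{1}{p_{2}}=1-\frac{1}{p_{1}}-\frac{1}{p_{2}}\). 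So it suffices to check that \((\frac{1}{p_{2}},1-\frac{1}{p_{1}})\) lies in the region of Figure \ref{sphfig}.

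Under the map \((\frac{1}{p_{1}},\frac{1}{p_{2}})\mapsto(\frac{1}{p_{2}},1-\frac{1}{p_{1}})\), the three vertices go as follows:
\begin{itemize}
\item \((1,0)\mapsto(0,0)\);
\item \((0,1)\mapsto(1,1)\);
\item \((\frac{d-1}{d+1},\frac{d-1}{d+1})\mapsto(\frac{d-1}{d+1},\frac{2}{d+1})\).
\end{itemize}
These are exactly the three vertices of the triangle in Figure \ref{sphfig}. The map is affine, so it sends the convex hull to the convex hull. The bound (\ref{spheq}) on that triangle therefore gives the theorem directly.

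To be self-contained, I would instead verify the three vertices and interpolate. At \((1,0)\) and \((0,1)\) the exponent is \(0\). There the bound is just \(\Ld_{P_{1}}(f_{1},f_{2})\le\|f_{1}\|_{1}\|A_{\lambda}f_{2}\|_{\infty}\le\|f_{1}\|_{1}\|f_{2}\|_{\infty}\), and symmetrically with the roles swapped. At the third vertex I would apply Theorem \ref{hklthm} with \(p\) slightly above \(\frac{d+1}{d-1}\), i.e.\ at \((\frac{1}{p},\frac{1}{p})\) with \(\frac{1}{p}\) just below \(\frac{d-1}{d+1}\). This gives exponent \(\frac{d}{2}(1-\frac{2}{p})\), which matches \(\frac{d}{2}(1-\frac{1}{p_{1}}-\frac{1}{p_{2}})\).

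The exponent \(\frac{d}{2}(1-\frac{1}{p_{1}}-\frac{1}{p_{2}})\) is affine in \((\frac{1}{p_{1}},\frac{1}{p_{2}})\). Hence multilinear Riesz--Thorin (or complex interpolation of the bilinear form with the \(\lambda\)-power absorbed) gives the estimate with the correct power of \(\lambda\) throughout the interior of the hull.

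The main obstacle is the endpoint \((\frac{d-1}{d+1},\frac{d-1}{d+1})\) itself, since Theorem \ref{hklthm} requires the strict inequality \(p>\frac{d+1}{d-1}\). I would handle this by interpreting the hull with the endpoint excluded, i.e.\ reaching it only as a limit of points where the theorem applies. Since \(1<p_{i}<\infty\) and \(\frac{1}{p_{1}}+\frac{1}{p_{2}}>1\) are assumed, the degenerate vertices and the edge joining them are also excluded, so only interior and open-edge points need to be covered. Each such point is a convex combination of \((1,0)\), \((0,1)\), and some \((\frac{1}{p},\frac{1}{p})\) with \(\frac{1}{p}<\frac{d-1}{d+1}\), and the constant \(C_{p_{1},p_{2}}\) may depend on that choice.

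Finally, I would note that testing \(f_{1}=f_{2}=1_{B(0,\sqrt{\lambda})}\) shows the exponent is sharp. In that case \(\Ld_{P_{1}}\approx\lambda^{d/2}\), while \(\|f_{1}\|_{p_{1}}\|f_{2}\|_{p_{2}}\approx\lambda^{\frac{d}{2}(\frac{1}{p_{1}}+\frac{1}{p_{2}})}\).
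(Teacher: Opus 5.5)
Your main route is the paper's own: bound \(\Ld_{P_{1}}(f_{1},f_{2})\approx\langle f_{1},A_{\lambda}f_{2}\rangle\) by H\"older with exponent \(p_{1}'\), and check that \((\frac{1}{p_{2}},1-\frac{1}{p_{1}})\) lies in the region of (\ref{spheq}). Your observation that this affine map carries \((1,0)\), \((0,1)\), \((\frac{d-1}{d+1},\frac{d-1}{d+1})\) onto the three vertices of the triangle in Figure \ref{sphfig} is a cleaner substitute for the paper's case-by-case inequalities, and your vertex-plus-interpolation variant is equivalent. The step that fails is your treatment of the boundary. You claim that once \(V=(\frac{d-1}{d+1},\frac{d-1}{d+1})\) is removed, every remaining admissible point is a convex combination of \((1,0)\), \((0,1)\) and some \((\frac{1}{p},\frac{1}{p})\) with \(\frac{1}{p}<\frac{d-1}{d+1}\), including the points on the open edges.

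This is false on the two edges through \(V\). The edge from \((0,1)\) to \(V\) lies on the line \(\frac{2}{d-1}\cdot\frac{1}{p_{1}}+\frac{1}{p_{2}}=1\). By contrast, \((1,0)\) and every \((s,s)\) with \(s<\frac{d-1}{d+1}\) satisfy the strict inequality \(<1\); for \((s,s)\) the left side is \(\frac{d+1}{d-1}s\). Hence a convex combination lands on that edge only if all its weight sits on \((0,1)\). The same holds for the edge from \((1,0)\) to \(V\), which lies on \(\frac{1}{p_{1}}+\frac{2}{d-1}\cdot\frac{1}{p_{2}}=1\).

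Under your affine map these edges become the segments joining \((1,1)\) and \((0,0)\) to \((\frac{d-1}{d+1},\frac{2}{d+1})\), where (\ref{spheq}) is only available with strict inequality. So your direct route does not reach them either. Reaching them would require something like the endpoint case \(p=\frac{d+1}{d-1}\) of Theorem \ref{hklthm}, which is not available.

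What your argument proves is (\ref{Lequation}) on the open triangle \(\frac{1}{p_{1}}+\frac{1}{p_{2}}>1\), \(\frac{1}{p_{1}}+\frac{2}{d-1}\cdot\frac{1}{p_{2}}<1\), \(\frac{2}{d-1}\cdot\frac{1}{p_{1}}+\frac{1}{p_{2}}<1\). This is exactly what the paper's proof gives as well, since its conditions are all strict. You should state your conclusion there, not on the closed hull minus \(V\).
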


\begin{remark}
    The region in Theorem \ref{Lthm} is the set of all points \(\left(\frac{1}{p_{1}},\frac{1}{p_{2}}\right)\) such that the following hold:
        \begin{enumerate}
        \item \(\frac{1}{p_{1}}+\frac{2}{d-1}\left(\frac{1}{p_{2}}\right)<1\) when \(\frac{1}{p_{1}}<\frac{d-1}{d+1}\).
        \item \((\frac{2}{d-1})\frac{1}{p_{1}}+\frac{1}{p_{2}}<1\) when  \(\frac{1}{p_{1}}\geq\frac{d-1}{d+1}\).
    \end{enumerate}
    This region is shown in Figure \ref{Lfig}.\\
    
    \begin{figure}
\centering
\begin{tikzpicture}
    \begin{axis}[
        axis lines = middle,
        xlabel = \(1/p\),
        ylabel = \(1/q\),
        xmin = -0.3, xmax = 1.6,
        ymin = -0.2, ymax = 1.2,
        xtick = {0, 0.5,  1},
        ytick = {0,  0.5, 1},
        grid = both,
        width = 8cm,
        height = 8cm,
        axis equal image
    ]

    \addplot[thick, fill=blue!20] coordinates {
        (0, 1)
        (1, 0)
        (0.9, 0.9)
        (0, 1)
    };


    \node at (axis cs: 0.95, 0.95) [below right] {\((\frac{d-1}{d+1}, \frac{d-1}{d+1})\)};

    \end{axis}
\end{tikzpicture}
\caption{Region of \(\ell^{p}\) improving for \(\Ld_{P_{1}}\).}
\label{Lfig}
\end{figure}
\end{remark}

\begin{proof}[Proof of Theorem \ref{Lthm}]
This theorem is a direct consequence of Theorem \ref{hklthm}.\\ Let \(q_{2}\) be such that \(\frac{1}{q_{2}}=1-\frac{1}{p_{1}}\), and \((\frac{1}{p_{2}},\frac{1}{q_{2}})\) is in the region of \(\ell^{p}\) improving for the spherical averaging operator. Then,
\begin{align*}
    \Ld_{P_{1}}(f_{1},f_{2})&\approx\langle f_{1},A_{\lambda}f_{2}\rangle\\
    &\leq\|f_{1}\|_{p_{1}}\|A_{\lambda}f_{2}\|_{q_{2}} \\
    &\lesssim\lambda^{\frac{d}{2}(\frac{1}{q_{2}}-\frac{1}{p_{1}})}\|f_{1}\|_{p_{1}}\|f_{2}\|_{p_{2}}\\
    &=\lambda^{\frac{d}{2}(1-\frac{1}{p_{1}}-\frac{1}{p_{2}})}\|f_{1}\|_{p_{1}}\|f_{2}\|_{p_{2}}.
\end{align*}
Thus, we get that (\ref{Lequation}) holds as long as \((\frac{1}{p_{2}},1-\frac{1}{p_{1}})\) lies within the region of \(\ell^{p}\) improving for the spherical averaging operator in Figure \ref{sphfig}, which happens when the following conditions are satisfied:
\begin{enumerate}
    \item \(\frac{1}{p_{1}}+\frac{1}{p_{2}}>1\)
    \item \(\frac{2}{d-1}(\frac{1}{p_{1}})+\frac{1}{p_{2}}<1\) for \(\frac{1}{p_{1}}\geq\frac{d-1}{d+1}\)
    \item \(\frac{2}{d-1}(\frac{1}{p_{2}})+\frac{1}{p_{1}}<1\) for  \(\frac{1}{p_{1}}<\frac{d-1}{d+1}\).
\end{enumerate}
This is equivalent to saying that the point \((\frac{1}{p_{1}},\frac{1}{p_{2}})\) is in the convex hull of the points
\begin{itemize}
        \item \((1,0)\)
        \item \((0,1)\)
        \item \((\frac{d-1}{d+1},\frac{d-1}{d+1})\).
    \end{itemize}
\end{proof}

\subsection{Exponent sharpness}
To see that the exponent of \(\frac{d}{2}(1-\frac{1}{p_{1}}-\frac{1}{p_{2}})\) on \(\lambda\) is the best we can guarantee, take \(f_{1}=f_{2}=B_{\lambda}\) where \(B_{\lambda}\) is an indicator function for a ball of radius \(\lambda^{\frac{1}{2}}\). Note that as discussed in \cite{H20}, \(A_{\lambda}B_{\lambda}(x)\approx B_{\lambda}(x)\). Then,
\[\|f_{1}\|_{p_{1}}\|f_{2}\|_{p_{2}}\approx\lambda^{\frac{d}{2}(\frac{1}{p_{1}}+\frac{1}{p_{2}})},\]
and 
\begin{align*}
    \Ld_{P_{1}}(f_{1},f_{2})&\approx\frac{1}{|S_{\lambda}|}\sum_{x_{1},x_{2}\in\Z^{d}}B_{\lambda}(x_{1})B_{\lambda}(x_{2})S_{\lambda}(x_{1}-x_{2})\\
    &=\frac{1}{|S_{\lambda}|}\sum_{x_{1}\in\Z^{d}}B_{\lambda}(x_{1})\sum_{x_{2}\in\Z^{d}}B_{\lambda}(x_{2})S_{\lambda}(x_{1}-x_{2})\\
    &=\sum_{x_{1}\in\Z^{d}}B_{\lambda}(x_{1})A_{\lambda}B_{\lambda}(x_{1})\\
    &\approx\sum_{x_{1}\in\Z^{d}}B_{\lambda}(x_{1})\\
    &\approx \lambda^{\frac{d}{2}}\approx\lambda^{\frac{d}{2}(1-\frac{1}{p_{1}}-\frac{1}{p_{2}})}\|f_{1}\|_{p_{1}}\|f_{2}\|_{p_{2}}.
\end{align*}

Thus, \(\frac{d}{2}(1-\frac{1}{p_{1}}-\frac{1}{p_{2}})\) is the best exponent on \(\lambda\) that we can guarantee.


\section{The 2-Chain, \(P_{2}\)} \label{2 chain section}
Define
\[\Ld_{P_{2}}(f_{1},f_{2},f_{3})=\frac{1}{N_{P_{2}}}\sum_{x_{1},x_{2},x_{3}\in\Z^{d}}S_{\lambda}(x_{1}-x_{2})S_{\lambda}(x_{2}-x_{3})f_{1}(x_{1})f_{2}(x_{2})f_{3}(x_{3}).\]

\begin{theorem}\label{2chain}
    If \(d\geq 5\) and \(1<p_{1},p_{2},p_{3}<\infty\) with \(\frac{1}{p_{1}}+\frac{1}{p_{2}}+\frac{1}{p_{3}}>1\), then there exist constants \(C_{p_{1},p_{2},p_{3}}\) such that for all \(\lambda\in\mathcal{R}_{P_{2}}\) we have the \(\ell^{p}\) improving inequality
    \begin{equation}\label{2chainineq}\Ld_{P_{2}}(f_{1},f_{2},f_{3})\leq C_{p_{1},p_{2},p_{3}}\lambda^{\frac{d}{2}(1-\frac{1}{p_{1}}-\frac{1}{p_{2}}-\frac{1}{p_{3}})}\|f_{1}\|_{p_{1}}\|f_{2}\|_{p_{2}}\|f_{3}\|_{p_{3}}\end{equation}
    provided \(\left(\frac{1}{p_{1}},\frac{1}{p_{2}},\frac{1}{p_{3}}\right)\) is in the convex hull of the points:
\begin{multicols}{3}
    \begin{itemize}
        \item \((1,0,0)\)
        \item \((0,1,0)\)
        \item \((0,0,1)\)
        \columnbreak
        \item \((\frac{d-1}{d+1},\frac{d-1}{d+1},0)\)
        \item \((0,\frac{d-1}{d+1},\frac{d-1}{d+1})\)
        \item \((\frac{d-1}{d+1},\frac{d-3}{d+1},\frac{d-1}{d+1})\)
        \columnbreak
        \item \((\frac{d-1}{d+1},0,\frac{d^{2}-5}{d^{2}-1})\)
        \item \((\frac{d^{2}-5}{d^{2}-1},0,\frac{d-1}{d+1})\)
    \end{itemize}
\end{multicols}
\end{theorem}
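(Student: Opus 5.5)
The plan is to verify the estimate (\ref{2chainineq}) at each of the eight listed vertices, and then to fill in the convex hull by multilinear interpolation. Two facts make the interpolation step work. First, \(\Ld_{P_{2}}\) is a positive trilinear form, so we may take \(f_{i}\geq 0\). Second, the exponent \(\frac{d}{2}(1-\sum_i \frac{1}{p_{i}})\) is affine in \((\frac{1}{p_{1}},\frac{1}{p_{2}},\frac{1}{p_{3}})\). Multilinear Riesz--Thorin interpolation (complex interpolation of the trilinear form) between vertex estimates therefore produces exactly the interpolated power of \(\lambda\), with constants independent of \(\lambda\). Throughout we use \(N_{P_{2}}\approx|S_{\lambda}|^{2}\), and write \(a=\frac{d-1}{d+1}\).

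\emph{Trivial vertices.} At \((1,0,0)\) we sum over \(x_{3}\) and then \(x_{2}\); each sum against a sphere indicator contributes a factor \(|S_{\lambda}|\). This gives \(\Ld_{P_{2}}\lesssim\|f_{1}\|_{1}\|f_{2}\|_{\infty}\|f_{3}\|_{\infty}\), which has exponent \(0\), as required. The vertices \((0,1,0)\) and \((0,0,1)\) are handled the same way.

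\emph{Vertices reducing to \(P_{1}\).} At \((a,a,0)\) we bound \(f_{3}\) by \(\|f_{3}\|_{\infty}\) and sum over \(x_{3}\), which gains \(|S_{\lambda}|\). This leaves \(\|f_{3}\|_{\infty}\Ld_{P_{1}}(f_{1},f_{2})\), and Theorem \ref{Lthm} applies. The vertex \((0,a,a)\) is symmetric.

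\emph{Vertices using the operator \(A_{path_{2}}\).} The form factors as
\[\Ld_{P_{2}}\approx\sum_{x_{2}}f_{2}(x_{2})\,A_{\lambda}f_{1}(x_{2})\,A_{\lambda}f_{3}(x_{2}).\]
\begin{itemize}
\item At \((a,\frac{d-3}{d+1},a)\) we use Hölder with \(\frac{1}{p_{2}}+\frac{2}{q}=1\), together with the corner estimate of the spherical region, \(\|A_{\lambda}f\|_{q}\lesssim\lambda^{\frac d2(\frac1q-\frac1p)}\|f\|_{p}\) at \((\frac1p,\frac1q)=(a,\frac{2}{d+1})\). This forces \(\frac{1}{p_{2}}=1-\frac{4}{d+1}=\frac{d-3}{d+1}\). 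The total exponent is \(\frac d2(\frac2q-2a)=\frac d2(1-\frac1{p_{1}}-\frac1{p_{2}}-\frac1{p_{3}})\).
\item At \((a,0,\frac{d^{2}-5}{d^{2}-1})\) we put \(f_{2}\in\ell^{\infty}\) and use Hölder with \(\frac1{q_{1}}+\frac1{q_{3}}=1\). We take \(f_{1}\) at the corner \((a,\frac{2}{d+1})\), which forces \(\frac1{q_{3}}=a\). We then place \((\frac1{p_{3}},a)\) on the lower boundary \(\frac1q=\frac{d-1}{2}(\frac1p-1)+1\) of the spherical region. Solving gives \(\frac1{p_{3}}=1-\frac{4}{d^{2}-1}=\frac{d^{2}-5}{d^{2}-1}\). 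The exponents again add to \(\frac d2(1-\sum_i\frac1{p_i})\).
\item The vertex \((\frac{d^{2}-5}{d^{2}-1},0,a)\) follows by symmetry.
\end{itemize}

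The main obstacle is at the endpoints. Theorem \ref{hklthm} is stated only for \(p>\frac{d+1}{d-1}\), i.e.\ the corner \((a,\frac{2}{d+1})\) and the boundary of the spherical region are open. So the estimates at the nontrivial vertices hold only up to an \(\epsilon\). To handle this, I would run the argument at points \(\epsilon\)-close to each vertex inside the spherical region; these produce a slightly shrunken polytope. Letting \(\epsilon\to0\) then yields every point of the convex hull, except possibly boundary faces touching these open endpoints, where the constant \(C_{p_{1},p_{2},p_{3}}\) may blow up; this is consistent with the constants being allowed to depend on the \(p_{i}\). A secondary check is that the polytope generated this way is exactly the stated hull, which amounts to confirming that the eight points are the vertices.
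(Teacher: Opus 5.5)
Your argument is correct. It reaches the paper's region by a different assembly.

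\emph{The paper's route.} The paper never works vertex by vertex and never interpolates the trilinear form. It fixes an arbitrary point and applies H\"older to \(\langle f_{2},A_{\lambda}f_{1}\cdot A_{\lambda}f_{3}\rangle\). It then solves directly for \((\frac{1}{q_{1}},\frac{1}{q_{3}})\) with \(\frac{1}{q_{1}}+\frac{1}{q_{3}}+\frac{1}{p_{2}}=1\) and with both \((\frac{1}{p_{1}},\frac{1}{q_{1}})\) and \((\frac{1}{p_{3}},\frac{1}{q_{3}})\) in the spherical region. This is done by a four-case analysis on whether \(\frac{1}{p_{1}},\frac{1}{p_{3}}\leq\frac{d-1}{d+1}\). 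The result is the strict inequalities of Remark \ref{2chainrmk}, which the paper asserts describe the stated hull, detailing only Case 1.

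\emph{Your route.} You check the eight listed points, using the trivial bound, the reduction to \(\Ld_{P_{1}}\), and the same factorization. You then fill in by multilinear complex interpolation, which produces exactly the interpolated power of \(\lambda\) because the exponent is affine.

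\emph{What each buys.} The paper's route needs no multilinear interpolation, and it outputs explicit inequalities that it later compares with the necessary conditions. Yours proves the convex-hull description directly, with no case analysis.

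\emph{The regions agree.} The points where the paper's choice of \((q_{1},q_{3})\) is possible form the image of \(R\times R\) (\(R\) the spherical triangle) under \((u,v,s,w)\mapsto(u,1-v-w,s)\), cut by \(\frac{1}{p_{2}}\geq 0\). This set is convex, and its extreme points are exactly your eight points. So your vertex computations are precisely the check the paper leaves implicit. This convexity could even replace your trilinear interpolation.

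\emph{Endpoints.} Your \(\epsilon\)-treatment of the open corner \((\frac{d-1}{d+1},\frac{2}{d+1})\) is the right reading of the theorem. Neither proof covers the upper faces of the hull, which matches the strict inequalities in Remark \ref{2chainrmk}. Faces with some \(\frac{1}{p_{i}}=0\) or \(\sum_{i}\frac{1}{p_{i}}=1\) are excluded by the hypotheses anyway.

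Finally, your ``secondary check'' is unnecessary: interpolation from any finite set of points yields their convex hull, whether or not each point is extreme.
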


\begin{remark}\label{2chainrmk}
    The region in Theorem \ref{2chain} is the set of all points \(\left(\frac{1}{p_{1}},\frac{1}{p_{2}},\frac{1}{p_{3}}\right)\) such that the following hold:
        \begin{enumerate}
        \item \(\frac{1}{p_{2}}+\frac{2}{d-1}\left(\frac{1}{p_{1}}+\frac{1}{p_{3}}\right)<1\) when \(\frac{1}{p_{1}},\frac{1}{p_{3}}\leq\frac{d-1}{d+1}\)
        \item \((\frac{2}{d-1})\frac{1}{p_{3}}+\frac{1}{p_{2}}+(\frac{d-1}{2})\frac{1}{p_{1}}<\frac{d-1}{2}\) when \(\frac{1}{p_{3}}\leq\frac{d-1}{d+1}<\frac{1}{p_{1}}\)
        \item \((\frac{2}{d-1})\frac{1}{p_{1}}+\frac{1}{p_{2}}+(\frac{d-1}{2})\frac{1}{p_{3}}<\frac{d-1}{2}\) when \(\frac{1}{p_{1}}\leq\frac{d-1}{d+1}<\frac{1}{p_{3}}\)
        \item \(\frac{1}{p_{1}}+\frac{1}{p_{3}}+(\frac{2}{d-1})\frac{1}{p_{2}}<2-\frac{2}{d-1}\) when \(\frac{1}{p_{1}},\frac{1}{p_{3}}>\frac{d-1}{d+1}\).
    \end{enumerate}
    This region is shown in Figure \ref{fig:chainbounds}.\\
    
    \begin{figure}[h]
\centering
    \begin{tikzpicture}[tdplot_main_coords,scale=4]

\coordinate (A) at (0,1,0);
\coordinate (B) at (1,0,0);
\coordinate (C) at (0,0,1);
\coordinate (D) at (0,10/11,10/11);
\coordinate (E) at (10/11,10/11,0);
\coordinate (G) at (0.91,0.82,0.91);
\coordinate (H) at (0.91,0.82,0.91);
\coordinate (I) at (10/11,0,0.99);
\coordinate (J) at (0.99,0,10/11);

\draw[->] (0,0,0) -- (1.2,0,0) node[below left]{$\frac{1}{p_{1}}$};
\draw[->] (0,0,0) -- (0,1.2,0) node[left]{$\frac{1}{p_{2}}$};
\draw[->] (0,0,0) -- (0,0,1.2) node[above]{$\frac{1}{p_{3}}$};


    \filldraw[fill=gray!40,opacity=0.7] (A) -- (B) -- (C) -- cycle;
    \filldraw[fill=gray!40,opacity=0.7] (I)-- (J) -- (B) -- (C) -- cycle;
    \filldraw[fill=gray!40,opacity=0.7] (A) -- (C) -- (D) -- cycle;
    \filldraw[fill=gray!40,opacity=0.7] (A) -- (B) -- (E) -- cycle;
    \filldraw[fill=gray!40,opacity=0.7] (I) -- (J) -- (H) -- cycle;
    \filldraw[fill=gray!40,opacity=0.7] (I) -- (H) -- (D) -- (C) -- cycle;
    \filldraw[fill=gray!40,opacity=0.7] (J) -- (H) -- (E) -- (B) -- cycle;
    \filldraw[fill=gray!40,opacity=0.7] (A) -- (E) -- (H) -- (D) -- cycle;
    \foreach \p in {A,B,C,D,E,G,H,I,J}
       \fill (\p) circle[radius=0.5pt];

   
\end{tikzpicture}
\caption{Region of \(\ell^{p}\) improvement for \(\Ld_{P_{2}}\).}
\label{fig:chainbounds}
\end{figure}
\end{remark}

\begin{proof}[Proof of Theorem \ref{2chain}]
    Note that \[\Ld_{P_{2}}(f_{1},f_{2},f_{3})\approx\frac{1}{|S_{\lambda}|^{2}}\sum_{x_{1},x_{2},x_{3}\in\Z^{d}}S_{\lambda}(x_{1}-x_{2})S_{\lambda}(x_{2}-x_{3})f_{1}(x_{1})f_{2}(x_{2})f_{3}(x_{3})=\langle f_{2},A_{\lambda}(f_{1})\cdot A_{\lambda}(f_{3})\rangle.\] Assume that \(q_{1},q_{3}\) are such that \(\frac{1}{q_{1}}+\frac{1}{q_{3}}+\frac{1}{p_{2}}=1\) and (\ref{spheq}) holds for \((\frac{1}{p_{1}},\frac{1}{q_{1}})\) and \((\frac{1}{p_{3}},\frac{1}{q_{3}})\). Through an application of Cauchy-Schwarz, we get that
    \begin{align*}
        \Ld_{P_{2}}(f_{1},f_{2},f_{3})&\approx\langle f_{2},A_{\lambda}(f_{1})\cdot A_{\lambda}(f_{3})\rangle\\
        & \leq \|f_{2}\|_{p_{2}}\|A_{\lambda}(f_{1})\|_{q_{1}}\|A_{\lambda}(f_{3})\|_{q_{3}}\\
        &\lesssim \|f_{1}\|_{p_{1}}\|f_{2}\|_{p_{2}}\|f_{3}\|_{p_{3}}\lambda^{\frac{d}{2}\left(1-\frac{1}{p_{1}}-\frac{1}{p_{2}}-\frac{1}{p_{3}}\right)}
    \end{align*}
    Thus, (\ref{2chainineq}) holds as long as such \(q_{1}\) and \(q_{3}\) exist.\\
    To find if there exist such \(q_{1}\) and \(q_{3}\), we consider four cases, depending on \(\frac{1}{p_{1}}\) and \(\frac{1}{p_{3}}\):
    \begin{itemize}
        \item Case 1: \(\frac{1}{p_{1}},\frac{1}{p_{3}}\leq\frac{d-1}{d+1}\).
        \item Case 2: \(\frac{1}{p_{1}}\leq\frac{d-1}{d+1}<\frac{1}{p_{3}}\).
        \item Case 3: \(\frac{1}{p_{3}}\leq\frac{d-1}{d+1}<\frac{1}{p_{1}}\).
        \item Case 4: \(\frac{1}{p_{1}},\frac{1}{p_{3}}>\frac{d-1}{d+1}\).
    \end{itemize}

    We provide a detailed proof for Case 1. The proofs for the rest of the cases are similar.\\

    Case 1: \(\frac{1}{p_{1}},\frac{1}{p_{3}}\leq\frac{d-1}{d+1}\).\\
    Let us assume that we are given \(\frac{1}{p_{1}},\frac{1}{p_{2}},\frac{1}{p_{3}}\) with \(\frac{1}{p_{1}},\frac{1}{p_{3}}<\frac{d-1}{d+1}\). We want to choose \(\frac{1}{q_{1}}, \frac{1}{q_{3}}\) such that \(\frac{1}{q_{1}}+\frac{1}{q_{3}}+\frac{1}{p_{2}}=1\), and \(\left(\frac{1}{p_{1}},\frac{1}{q_{1}}\right)\) and \(\left(\frac{1}{p_{3}},\frac{1}{q_{3}}\right)\) are such that (\ref{spheq}) holds. This is guaranteed if \(\frac{1}{p_{1}}>\frac{1}{q_{1}}>(\frac{2}{d-1})\frac{1}{p_{1}}\) and \(\frac{1}{p_{3}}>\frac{1}{q_{3}}>(\frac{2}{d-1})\frac{1}{p_{3}}\). Rewriting \(\frac{1}{q_{3}}=1-\frac{1}{p_{2}}-\frac{1}{q_{1}}\) and rearranging the inequalities gives the following four conditions that \(\frac{1}{q_{1}}\) needs to satisfy:\\
    \begin{itemize}
        \item \(\frac{1}{q_{1}}\leq\frac{1}{p_{1}}\)
        \item \(\frac{1}{q_{1}}>(\frac{2}{d-1})\frac{1}{p_{1}}\)
        \item \(\frac{1}{q_{1}}\geq1-\frac{1}{p_{3}}-\frac{1}{p_{2}}\)
        \item \(\frac{1}{q_{1}}<1-\frac{1}{p_{2}}-(\frac{2}{d-1})\frac{1}{p_{3}}\).
    \end{itemize}

    Since \(\frac{1}{p_{1}}>(\frac{2}{d-1})\frac{1}{p_{1}}\) and \(\frac{1}{p_{3}}>(\frac{2}{d-1})\frac{1}{p_{3}}\), we have that \(1-\frac{1}{p_{2}}-(\frac{2}{d-1})\frac{1}{p_{3}}>1-\frac{1}{p_{3}}-\frac{1}{p_{2}}\). We assume that \(\frac{1}{p_{1}}+\frac{1}{p_{2}}+\frac{1}{p_{3}}>1\), so we have that \(\frac{1}{p_{1}}>1-\frac{1}{p_{2}}-\frac{1}{p_{3}}\). If we add the extra condition that \(\frac{1}{p_{2}}+\frac{2}{d-1}\left(\frac{1}{p_{1}}+\frac{1}{p_{3}}\right)<1\), then \((\frac{2}{d-1})\frac{1}{p_{1}}<1-\frac{1}{p_{2}}-(\frac{2}{d-1})\frac{1}{p_{3}}\). This guarantees that a \(\frac{1}{q_{1}}\) exists that satisfies the desired inequalities.\\

    Thus, in Case 1, if \(\frac{1}{p_{2}}+\frac{2}{d-1}\left(\frac{1}{p_{1}}+\frac{1}{p_{3}}\right)<1\) then (\ref{2chainineq}) holds. \\

    By doing the same process in Cases 2-4, we get that (\ref{2chainineq}) holds when \(\left(\frac{1}{p_{1}},\frac{1}{p_{2}},\frac{1}{p_{3}}\right)\) is in the region bounded by the planes listed in Remark \ref{2chainrmk}, which is the same region as the convex hull of the points listed in Theorem \ref{2chain}.

\end{proof} 

\subsection{Exponent sharpness} 
    To see that this exponent is sharp, take \(f_{1}=f_{2}=f_{3}=B_{\lambda}\) where \(B_{\lambda}\) is an indicator function for a ball of radius \(\lambda^{\frac{1}{2}}\). Note that \(A_{\lambda}B_{\lambda}\approx B_{\lambda}\).\\
    Then, \[\|f_{1}\|_{p_{1}}\|f_{2}\|_{p_{2}}\|f_{3}\|_{p_{3}}\approx\lambda^{\frac{d}{2}(\frac{1}{p_{1}}+\frac{1}{p_{2}}+\frac{1}{p_{3}})},\]
    and
    \begin{align*}
        \Ld_{P_{2}}(f_{1},f_{2},f_{3})&\approx\frac{1}{|S_{\lambda}|^{2}}\sum_{x_{1},x_{2},x_{3}\in\Z^{d}}B_{\lambda}(x_{1})B_{\lambda}(x_{2})B_{\lambda}(x_{3})S_{\lambda}(x_{1}-x_{2})S_{\lambda}(x_{2}-x_{3})\\
        &=\sum_{x_{2}\in\Z^{d}}B_{\lambda}(x_{2})A_{\lambda}B_{\lambda}(x_{2})\\
        &\approx \sum_{x_{2}\in\Z^{d}}B_{\lambda}(x_{2})\\
        &\approx \lambda^{\frac{d}{2}}\\
        &=\lambda^{\frac{d}{2}(\frac{1}{p_{1}}+\frac{1}{p_{2}}+\frac{1}{p_{3}})+\frac{d}{2}(1-\frac{1}{p_{1}}-\frac{1}{p_{2}}-\frac{1}{p_{3}})}\\
        &\approx\|f_{1}\|_{p_{1}}\|f_{2}\|_{p_{2}}\|f_{3}\|_{p_{3}}\lambda^{\frac{d}{2}(1-\frac{1}{p_{1}}-\frac{1}{p_{2}}-\frac{1}{p_{3}})}
    \end{align*}

    Thus, we can see that the exponent, \(\frac{d}{2}(1-\frac{1}{p_{1}}-\frac{1}{p_{2}}-\frac{1}{p_{3}})\), on \(\lambda\) is the best that we can guarantee.\\

\subsection{Necessary conditions.}

    By using test functions, we can find some necessary conditions for (\ref{2chainineq}) to hold. Take \(f_{1}(x)=f_{3}(x)=S_{\lambda}(x)\), and \(f_{2}(x)=1_{\{0\}}(x)\). Then
    \[\|f_{1}|\|_{p_{1}}\|f_{2}\|_{p_{2}}\|f_{3}\|_{p_{3}}\approx\lambda^{\frac{d-2}{2}(\frac{1}{p_{1}}+\frac{1}{p_{3}})},\]
    and 
    \begin{align*}
        \Ld_{P_{2}}(f_{1},f_{2},f_{3})&\approx\frac{1}{|S_{\lambda}|^{2}}\sum_{x_{1},x_{2},x_{3}\in\Z^{d}}1_{\{0\}}(x_{2})S_{\lambda}(x_{1})S_{\lambda}(x_{3})S_{\lambda}(x_{1}-x_{2})S_{\lambda}(x_{2}-x_{3})\\
        &=\frac{1}{|S_{\lambda}|^{2}}\sum_{x_{1},x_{3}\in\Z^{d}}S_{\lambda}(x_{1})S_{\lambda}(x_{3})\\
        &\approx 1.
    \end{align*}
    Thus, (\ref{2chainineq}) holds if \[0\leq \frac{d}{2}\left(1-\frac{1}{p_{1}}-\frac{1}{p_{2}}-\frac{1}{p_{3}}\right)+\frac{d-2}{2}\left(\frac{1}{p_{1}}+\frac{1}{p_{3}}\right),\] which is equivalent to saying that
    \[\left(\frac{1}{p_{1}}+\frac{1}{p_{3}}\right)\frac{2}{d}+\frac{1}{p_{2}}\leq 1.\]
    While not being an exact match, this is very similar to the equation for the plane that forms the boundary of the region in Theorem \ref{2chain} when \(\frac{1}{p_{3}},\frac{1}{p_{1}}\leq\frac{d-1}{d+1}\), which requires that \[\left(\frac{1}{p_{1}}+\frac{1}{p_{3}}\right)\frac{2}{d-1}+\frac{1}{p_{2}}\leq 1.\]
    These planes approach each other asymptotically as \(d\to\infty\).\\

    Similarly, take \(f_{1}(x)=f_{3}(x)=1_{\{0\}}(x)\) and \(f_{2}(x)=S_{\lambda}(x)\). Then, 
    \[\|f_{1}\|_{p_{1}}\|f_{2}\|_{p_{2}}\|f_{3}\|_{p_{3}}\approx\lambda^{\frac{d-2}{2}(\frac{1}{p_{2}})},\]
    and 
    \begin{align*}
        \Ld_{P_{2}}(f_{1},f_{2},f_{3})&\approx\frac{1}{|S_{\lambda}|^{2}}\sum_{x_{1},x_{2},x_{3}\in\Z^{d}}1_{\{0\}}(x_{1})1_{\{0\}}(x_{3})S_{\lambda}(x_{2})S_{\lambda}(x_{1}-x_{2})S_{\lambda}(x_{2}-x_{3})\\
        &=\frac{1}{|S_{\lambda}|^{2}}\sum_{x_{2}\in\Z^{d}}S_{\lambda}(x_{2})\\
        &\approx \frac{1}{|S_{\lambda}|}\approx\lambda^{-\frac{d-2}{2}}.
    \end{align*}
    In this case, (\ref{2chainineq}) holds if 
    \[-\frac{d-2}{2}<\frac{d}{2}\left(1-\frac{1}{p_{1}}-\frac{1}{p_{2}}-\frac{1}{p_{3}}\right)+\frac{d-2}{2}\left(\frac{1}{p_{2}}\right),\]
    or equivalently,
    \[\frac{1}{p_{2}}(\frac{2}{d})+\frac{1}{p_{1}}+\frac{1}{p_{3}}<2-\frac{2}{d}.\]
    This is again very similar to the equation for one of the planes that forms the boundary of the region in Theorem \ref{2chain}. Namely, when \(\frac{1}{p_{1}},\frac{1}{p_{3}}>\frac{d-1}{d+1}\), the plane bounding the region is \[\frac{1}{p_{1}}+\frac{1}{p_{3}}+\frac{2}{d-1}\left(\frac{1}{p_{2}}\right)<2-\frac{2}{d-1}.\]
    While again not being equal, the two planes approach each other asymptotically as \(d\to\infty\).


\section{The Triangle, \(K_{3}\)} \label{tri section}

For \(d\geq7\), define \[\Ld_{K_{3}}(f_{1},f_{2},f_{3})=\frac{1}{N_{K_{3}}}\sum_{x_{1},x_{2},x_{3}}f_{1}(x_{1})f_{2}(x_{2})f_{3}(x_{3})S_{\lambda}(x_{1}-x_{2})S_{\lambda}(x_{2}-x_{3})S_{\lambda}(x_{3}-x_{1})\]

\begin{theorem}\label{trithm}
    If \(d\geq 7\) and \(1<p_{1},p_{2},p_{3}\) with \(\frac{1}{p_{1}}+\frac{1}{p_{2}}+\frac{1}{p_{3}}>1\), then there exist constants \(C_{p_{1},p_{2},p_{3}}\) such that for all \(\lambda\in\mathcal{R}_{K_{3}}\) we have the \(\ell^{p}\) improving inequality
    \begin{equation}\label{tri ineq}\Ld_{K_{3}}(f_{1},f_{2},f_{3})\leq C_{p_{1},p_{2},p_{3}}\lambda^{\frac{d}{2}(1-\frac{1}{p_{1}}-\frac{1}{p_{2}}-\frac{1}{p_{3}})}\|f_{1}\|_{p_{1}}\|f_{2}\|_{p_{2}}\|f_{3}\|_{p_{3}}\end{equation}
    provided \(\left(\frac{1}{p_{1}},\frac{1}{p_{2}},\frac{1}{p_{3}}\right)\) is in the convex hull of the points:
\begin{multicols}{3}
    \begin{itemize}
        \item \((1,0,0)\)
        \item \((0,1,0)\)
        \columnbreak
        \item \((0,0,1)\)
        \item \((\frac{d-1}{d+1},\frac{d-1}{d+1},0)\)
        \columnbreak
        \item \((\frac{d-1}{d+1},0,\frac{d-1}{d+1})\)
        \item \((0,\frac{d-1}{d+1},\frac{d-1}{d+1})\)
    \end{itemize}
\end{multicols}
\end{theorem}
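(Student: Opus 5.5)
The plan is to reduce everything to the one-edge estimate, Theorem \ref{Lthm}, on the three coordinate faces of the region, and then fill in the interior by multilinear interpolation. The key observation is that, for fixed $x_{1},x_{2}$ with $|x_{1}-x_{2}|^{2}=\lambda$, the set of admissible $x_{3}$ is the set of lattice points on the $(d-2)$-dimensional sphere
\[
\{x_{3}:|x_{3}-x_{1}|^{2}=|x_{3}-x_{2}|^{2}=\lambda\}.
\]
For $d\geq 7$ and $\lambda\in\mathcal{R}_{K_{3}}$, this set has size $\lesssim|S_{\lambda}^{d-2}|$, uniformly in the pair, by the asymptotics in Section \ref{size section} (following \cite{AKP22}).

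\textbf{Step 1: the face $\frac{1}{p_{3}}=0$.} Bounding $f_{3}$ by its sup norm and summing out $x_{3}$ gives
\[
\Ld_{K_{3}}(f_{1},f_{2},f_{3})\lesssim \|f_{3}\|_{\infty}\frac{|S_{\lambda}^{d-2}|}{N_{K_{3}}}\sum_{x_{1},x_{2}}|f_{1}(x_{1})||f_{2}(x_{2})|S_{\lambda}(x_{1}-x_{2}) \approx \|f_{3}\|_{\infty}\Ld_{P_{1}}(|f_{1}|,|f_{2}|),
\]
using $N_{K_{3}}\approx|S_{\lambda}||S_{\lambda}^{d-2}|\approx|S_{\lambda}^{d-2}|N_{P_{1}}$. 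Theorem \ref{Lthm} then yields (\ref{tri ineq}) with exponent $\frac{d}{2}(1-\frac{1}{p_{1}}-\frac{1}{p_{2}})$, which is exactly the claimed exponent since $\frac{1}{p_{3}}=0$. This holds on the triangle with vertices $(1,0,0)$, $(0,1,0)$, $(\frac{d-1}{d+1},\frac{d-1}{d+1},0)$, with $\frac{1}{p_{1}}+\frac{1}{p_{2}}>1$. By the symmetry of $K_{3}$, the same argument gives the faces $\frac{1}{p_{1}}=0$ and $\frac{1}{p_{2}}=0$.

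\textbf{Step 2: interpolation into the interior.} The convex hull in the statement is exactly the convex hull of these three face regions. The target exponent $\frac{d}{2}(1-\sum_{i}\frac{1}{p_{i}})$ is affine in $(\frac{1}{p_{1}},\frac{1}{p_{2}},\frac{1}{p_{3}})$. Because the form has a nonnegative kernel, I would apply multilinear complex interpolation (multilinear Riesz--Thorin) between points on the faces. The bounds $C\lambda^{\eta_{0}}$ and $C\lambda^{\eta_{1}}$ interpolate to $C\lambda^{(1-\theta)\eta_{0}+\theta\eta_{1}}$, which is the affine exponent at the interpolated point. Any point of the hull with $\sum\frac{1}{p_{i}}>1$ is a convex combination of at most three face points. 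Each of these can be chosen strictly inside the open region where Theorem \ref{Lthm} applies, while still keeping $\sum\frac{1}{p_{i}}>1$ for them. Iterating two-point interpolation then covers the whole region with constants independent of $\lambda$.

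\textbf{Main obstacle.} I expect the hardest part to be the uniform upper bound on the number of lattice points on the $(d-2)$-sphere, $\lesssim|S_{\lambda}^{d-2}|$ for every admissible pair $x_{1},x_{2}$, together with the matching lower bound on $N_{K_{3}}$. This is what forces $d\geq 7$ and $\lambda\in\mathcal{R}_{K_{3}}$, and I would import it from \cite{AKP22}. A secondary issue is the boundary of the hull, in particular the corners $(\frac{d-1}{d+1},\frac{d-1}{d+1},0)$, where Theorem \ref{Lthm} is not available. I would handle this by choosing the face points slightly inside the open region, and I would not claim the estimate at these corners themselves.
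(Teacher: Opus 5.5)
Your proposal is correct and follows the same route as the paper. The paper bounds $f_3$ in $\ell^\infty$ and uses the uniform $\lesssim|S_{\lambda}^{d-2}|$ count of admissible $x_3$ to reduce to $\|f_3\|_\infty\Lambda_{P_1}(f_1,f_2)$; it then applies Theorem \ref{Lthm} on each coordinate face via the symmetry of $K_3$, and interpolates using that the exponent on $\lambda$ is affine. Your extra care in taking the face points inside the open region of Theorem \ref{Lthm}, and in not claiming the estimate on the non-coordinate boundary of the hull, sharpens the paper's one-line interpolation step; the strict inequalities in Remark \ref{Trirmk} implicitly exclude that boundary as well.
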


\begin{remark}\label{Trirmk}
    The region in Theorem \ref{trithm} is the set of all points \(\left(\frac{1}{p_{1}},\frac{1}{p_{2}},\frac{1}{p_{3}}\right)\) such that the following hold:
    \begin{enumerate}
    \item \(\frac{1}{p_{1}}+\frac{1}{p_{2}}+\frac{1}{p_{3}}>1\)
    \item \(\left(\frac{1}{p_{1}}+\frac{1}{p_{2}}\right)\frac{2}{d-1}+\frac{1}{p_{3}}<1\) when \(\frac{1}{p_{1}}+\frac{1}{p_{2}}<\frac{d-1}{d+1}\)
    \item \(\left(\frac{1}{p_{1}}+\frac{1}{p_{3}}\right)\frac{2}{d-1}+\frac{1}{p_{2}}<1\) when \(\frac{1}{p_{2}}>\frac{d-1}{d+1}\)
    \item \(\left(\frac{1}{p_{3}}+\frac{1}{p_{2}}\right)\frac{2}{d-1}+\frac{1}{p_{1}}<1\) when \(\frac{1}{p_{1}}>\frac{d-1}{d+1}\)
    \item \(\frac{1}{p_{1}}+\frac{1}{p_{2}}+\frac{1}{p_{3}}<\frac{2(d-1)}{d+1}\) when \(\frac{1}{p_{1}},\frac{1}{p_{2}}<\frac{d-1}{d+1},\) but \(\frac{1}{p_{1}}+\frac{1}{p_{2}}>\frac{d-1}{d+1}\).
    \end{enumerate}
    This is the region shown in Figure \ref{fig:tribounds}.
    \begin{figure}[h]
\centering
\begin{tikzpicture}[tdplot_main_coords,scale=4]
\coordinate (A) at (0,1,0);
\coordinate (B) at (1,0,0);
\coordinate (C) at (0,0,1);
\coordinate (D) at (0,10/11,10/11);
\coordinate (E) at (10/11,10/11,0);
\coordinate (F) at (10/11,0,10/11);

\draw[->] (0,0,0) -- (1.2,0,0) node[below left]{$\frac{1}{p_{1}}$};
\draw[->] (0,0,0) -- (0,1.2,0) node[left]{$\frac{1}{p_{2}}$};
\draw[->] (0,0,0) -- (0,0,1.2) node[above]{$\frac{1}{p_{3}}$};

\foreach \p in {A,B,C,D,E,F}
   \fill (\p) circle[radius=0.5pt];
    \filldraw[fill=gray!40,opacity=0.7] (A) -- (B) -- (C) -- cycle;
    \filldraw[fill=gray!40,opacity=0.7] (F) -- (B) -- (C) -- cycle;
    \filldraw[fill=gray!40,opacity=0.7] (A) -- (C) -- (D) -- cycle;
    \filldraw[fill=gray!40,opacity=0.7] (A) -- (B) -- (E) -- cycle;
    \filldraw[fill=gray!40,opacity=0.7] (C) -- (D) -- (F) -- cycle;
    \filldraw[fill=gray!40,opacity=0.7] (B) -- (E) -- (F) -- cycle;
    \filldraw[fill=gray!40,opacity=0.7] (A) -- (E) -- (D) -- cycle;
    \filldraw[fill=gray!40,opacity=0.7] (D) -- (E) -- (F) -- cycle;

\end{tikzpicture}
\caption{Region of \(\ell^{p}\) improvement for \(\Ld_{K_{3}}\).}
\label{fig:tribounds}
\end{figure}
\end{remark}

\begin{proof}[Proof of Theorem \ref{trithm}]

Note that

\begin{align*}
    \Ld_{K_{3}}(f_{1},f_{2},f_{3})\approx&\frac{1}{|S_{\lambda}||S_{\lambda}^{d-2}|}\sum_{x_{1},x_{2},x_{3}\in\Z^{d}}f_{1}(x_{1})f_{2}(x_{2})f_{3}(x_{3})S_{\lambda}(x_{1}-x_{2})S_{\lambda}(x_{2}-x_{3})S_{\lambda}(x_{3}-x_{1})\\
    =&\frac{1}{|S_{\lambda}||S_{\lambda}^{d-2}|}\sum_{x_{1},x_{2}\in\Z^{d}}f_{1}(x_{1})f_{2}(x_{2})S_{\lambda}(x_{1}-x_{2})\sum_{x_{3}\in\Z^{d}}f_{3}(x_{3})S_{\lambda}(x_{2}-x_{3})S_{\lambda}(x_{3}-x_{1})\\
    \leq& \frac{1}{|S_{\lambda}||S_{\lambda}^{d-2}|}\left(\sum_{x_{1},x_{2}\in\Z^{d}}f_{1}(x_{1})f_{2}(x_{2})S_{\lambda}(x_{1}-x_{2})\right)\\
    &\,\,\,\,\,\,\,\,\,\,\,\,\,\,\cdot\left(\max_{|x_{1}-x_{2}|=\lambda}\sum_{x_{3}\in\Z^{d}}f_{3}(x_{3})S_{\lambda}(x_{2}-x_{3})S_{\lambda}(x_{3}-x_{1})\right)\\
    \lesssim& \Ld_{P_{1}}(f_{1},f_{2})\|f_{3}\|_{\infty}\left(\max_{|x_{1}-x_{2}|=\lambda}\frac{1}{|S_{\lambda}^{d-2}|}\sum_{x_{3}\in\Z^{d}}S_{\lambda}(x_{2}-x_{3})S_{\lambda}(x_{3}-x_{1})\right)\\
    \lesssim &\Ld_{P_{1}}(f_{1},f_{2})\|f_{3}\|_{\infty}
\end{align*}

Thus, we have that \[\Ld_{K_{3}}(f_{1},f_{2},f_{3})\lesssim \|f_{1}\|_{p_{1}}\|f_{2}\|_{p_{2}}\|f_{3}\|_{\infty}\lambda^{\frac{d}{2}(1-\frac{1}{p_{1}}-\frac{1}{p_{2}})}\] when \(\left(\frac{1}{p_{1}},\frac{1}{p_{2}}\right)\) satisfies the conditions in Theorem \ref{Lthm} for \(\ell^{p}\) improving with the best exponent for \(\Ld_{P_{1}}(f_{1},f_{2})\). Using these conditions, the symmetry of the triangle, and interpolation, we get that \[\Ld_{K_{3}}(f_{1},f_{2},f_{3})\lesssim \|f_{1}\|_{p_{1}}\|f_{2}\|_{p_{2}}\|f_{3}\|_{p_{3}}\lambda^{\frac{d}{2}(1-\frac{1}{p_{1}}-\frac{1}{p_{2}}-\frac{1}{p_{3}})}\] holds when \((\frac{1}{p_{1}},\frac{1}{p_{2}},\frac{1}{p_{3}})\) is in the convex hull of the points 
\begin{multicols}{3}
    \begin{itemize}
        \item \((1,0,0)\)
        \item \((0,1,0)\)
        \columnbreak
        \item \((0,0,1)\)
        \item \((\frac{d-1}{d+1},\frac{d-1}{d+1},0)\)
        \columnbreak
        \item \((\frac{d-1}{d+1},0,\frac{d-1}{d+1})\)
        \item \((0,\frac{d-1}{d+1},\frac{d-1}{d+1})\).
    \end{itemize}
\end{multicols}

\end{proof}

\subsection{Exponent Sharpness}

Let \(B_{\lambda}(x)\) be an indicator for a ball of radius \(\lambda^{1/2}\). Given generic choices of \(y,z\in\Z\), and given that \(d\geq7\), then \( \displaystyle\sum_{x\in\Z}B_{\lambda}(x)S_{\lambda}(x-y)S_{\lambda}(x-z)\) counts the number of points \(x\in\Z\) where \(|x|<\lambda^{1/2}\), \(|x-y|=\lambda^{1/2}\), and \(|x-z|=\lambda^{1/2}\). This will be \(0\) if \(y>2\lambda^{1/2}\) or \(z>2\lambda^{1/2}\), and approximately \(|S_{\lambda}^{d-2}|\) otherwise. Thus, \[ \sum_{x\in\Z}B_{\lambda}(x)S_{\lambda}(x-y)S_{\lambda}(x-z)\approx |S_{\lambda}^{d-2}|B_{2\lambda}(y)B_{2\lambda}(z)\approx|S_{\lambda}^{d-2}|B_{\lambda}(y)B_{\lambda}(z).\]

To show the sharpness of the exponent on \(\lambda\), let \(f_{1}(x)=f_{2}(x)=f_{3}(x)=B_{\lambda}(x)\). Then \(\|f_{1}\|_{p_{1}}\|f_{2}\|_{p_{2}}\|f_{3}\|_{p_{3}}\approx \lambda^{\frac{d}{2}(\frac{1}{p_{1}}+\frac{1}{p_{2}}+\frac{1}{p_{3}})}\).\\
Now,
\begin{align*}
    \Ld_{K_{3}}(f_{1},f_{2},f_{3})&\approx\frac{1}{|S_{\lambda}||S_{\lambda}^{d-2}|}\sum_{x_{1},x_{2}, x_{3}\in\Z^{d}}B_{\lambda}(x_{1})B_{\lambda}(x_{2})B_{\lambda}(x_{3})S_{\lambda}(x_{1}-x_{2})S_{\lambda}(x_{2}-x_{3})S_{\lambda}(x_{3}-x_{1})\\
    &=\frac{1}{|S_{\lambda}||S_{\lambda}^{d-2}|}\sum_{x_{1},x_{2}\in\Z^{d}}B_{\lambda}(x_{1})B_{\lambda}(x_{2})S_{\lambda}(x_{1}-x_{2})\sum_{x_{3}\in\Z^{d}}B_{\lambda}(x_{3})S_{\lambda}(x_{2}-x_{3})S_{\lambda}(x_{3}-x_{1})\\
    &\approx \frac{1}{|S_{\lambda}|}\sum_{x_{1},x_{2}\in\Z^{d}}B_{\lambda}(x_{1})B_{\lambda}(x_{2})S_{\lambda}(x_{1}-x_{2})\\
    &=\frac{1}{|S_{\lambda}|}\sum_{x_{1}\in\Z^{d}}B_{R}(x_{1})\sum_{x_{2}\in\Z^{d}}B_{\lambda}(x_{2})S_{\lambda}(x_{1}-x_{2})\\
    &=\sum_{x_{1}\in\Z^{d}}B_{\lambda}(x_{1})A_{\lambda}(B_{\lambda})(x_{1})\\
    &\approx \sum_{x_{1}\in\Z^{d}}B_{\lambda}(x_{1})\\
    &\approx \lambda^{\frac{d}{2}}\\
    &=\lambda^{\frac{d}{2}(1-\frac{1}{p_{1}}-\frac{1}{p_{2}}-\frac{1}{p_{3}})}\lambda^{\frac{d}{2}(\frac{1}{p_{1}}+\frac{1}{p_{2}}+\frac{1}{p_{3}})}\\
    &=\lambda^{\frac{d}{2}(1-\frac{1}{p_{1}}-\frac{1}{p_{2}}-\frac{1}{p_{3}})}\|f_{1}\|_{p_{1}}\|f_{2}\|_{p_{2}}\|f_{3}\|_{p_{3}}
\end{align*}
Thus, \(\frac{d}{2}(1-\frac{1}{p_{1}}-\frac{1}{p_{2}}-\frac{1}{p_{3}})\) is the best exponent we can expect for \(\lambda\).
\\

\subsection{Necessary conditions}

To determine the necessary conditions to get an exponent of \(\frac{d}{2}(\frac{1}{p_{1}}-\frac{1}{p_{2}}-\frac{1}{p_{3}})\) on \(\lambda\), we take \(f_{1}(x)=1_{\{0\}}(x)\) and \(f_{2}(x)=f_{3}(x)=S_{\lambda}(x)\). Then
\[\|f_{1}\|_{p_{1}}\|f_{2}\|_{p_{2}}\|f_{3}\|_{p_{3}}\approx\lambda^{\frac{d-2}{2}(\frac{1}{p_{2}}+\frac{1}{p_{3}})},\]
and
\begin{align*}
    \Ld_{K_{3}}(f_{1},f_{2},f_{3})&\approx\frac{1}{|S_{\lambda}||S_{\lambda}^{d-2}|}\sum_{x_{1},x_{2},x_{3}\in\Z^{d}}1_{\{0\}}(x_{1})S_{\lambda}(x_{2})S_{\lambda}(x_{3})S_{\lambda}(x_{1}-x_{2})S_{\lambda}(x_{2}-x_{3})S_{\lambda}(x_{3}-x_{1})\\
    &=\frac{1}{|S_{\lambda}||S_{\lambda}^{d-2}|}\sum_{x_{2},x_{3}\in\Z^{d}}S_{\lambda}(x_{2})S_{\lambda}(x_{3})S_{\lambda}(x_{1}-x_{2})\\
    &\approx\frac{|S_{\lambda}|}{|S_{\lambda}^{d-2}|}\approx\lambda^{-1}.
\end{align*}
Thus, (\ref{tri ineq}) holds if 
\[-1\leq\frac{d}{2}\left(1-\frac{1}{p_{1}}-\frac{1}{p_{2}}-\frac{1}{p_{3}}\right),\]
or equivalently, if 
\[\frac{1}{p_{1}}+\frac{2}{d}\left(\frac{1}{p_{2}}+\frac{1}{p_{3}}\right)\leq 1+\frac{2}{d}.\]
This is similar to the equation for one of the planes the forms the boundary of the region in Theorem \ref{trithm}. When \(\frac{1}{p_{1}}>\frac{d-1}{d+1}\), the plane bounding the region is \[\frac{1}{p_{1}}+\frac{2}{d-1}\left(\frac{1}{p_{2}}+\frac{1}{p_{3}}\right)<1.\]
While not being equal, the two planes approach each other asymptotically as \(d\to\infty\). \\
By switching the roles of \(f_{1}\), \(f_{2}\), and \(f_{3}\), we can also get the necessary conditions 
\[\frac{1}{p_{2}}+\frac{2}{d}\left(\frac{1}{p_{1}}+\frac{1}{p_{3}}\right)\leq 1+\frac{2}{d},\]
and \[\frac{1}{p_{3}}+\frac{2}{d}\left(\frac{1}{p_{2}}+\frac{1}{p_{1}}\right)\leq 1+\frac{2}{d},\]
which, as \(d\to\infty\), asymptotically approach the sufficient conditions 
\[\frac{1}{p_{2}}+\frac{2}{d-1}\left(\frac{1}{p_{1}}+\frac{1}{p_{3}}\right)<1\] when \(\frac{1}{p_{2}}>\frac{d-1}{d+1}\) and \[\frac{1}{p_{3}}+\frac{2}{d-1}\left(\frac{1}{p_{2}}+\frac{1}{p_{1}}\right)<1\] when \(\frac{1}{p_{1}}+\frac{1}{p_{2}}<\frac{d-1}{d+1}\) respectively.\\

\subsection{Improvement on previous results}\label{tri improvement}

   For the triangle averaging operator, 
   \[A_{tri}(f,g)(x)=\frac{1}{\lambda^{d-3}}\sum_{y,z\in\Z^{d}}f(y)g(z)S_{\lambda}(x-y)S_{\lambda}(y-z)S_{\lambda}(z-x),\]
   Anderson, Kumchev, and Palsson \cite{AKP24} found that for \(d\geq 7\) and \(\frac{d+1}{d-1}<p<\frac{2d}{d+2}\), the triangle averaging operator satisfies the bound \[\|A_{tri}(f,g)\|_{1}\lesssim\lambda^{1+\frac{d}{2}-\frac{d}{p}}\\f\|_{p}\|g\|_{p}.\] If we apply Theorem \ref{trithm} in this setting, letting \(p_{1}=p_{2}=p\) and \(p_{3}=\infty\), we get a decay term of \(\lambda^{\frac{d}{2}-\frac{d}{p}}\), which improves on the results in \cite{AKP24}. Additionally, we have a much wider range for the choices of \(p_{1},p_{2},p_{3}\). 

\section{The Tetrahedron, \(K_{4}\)} \label{tetr section}

For \(d\geq 9\), we define

    \[\Ld_{K_{4}}(f_{1},f_{2},f_{3},f_{4})=\frac{1}{N_{K_{4}}}\sum_{x_{1},...x_{4}\in\Z^{d}}\prod_{i=1}^{4}f_{i}(x_{i})S_{K_{4}}(x_{1},x_{2},x_{3},x_{4})\]
Where \[S_{K_{4}}(x_{1},x_{2},x_{3},x_{4})=S_{\lambda}(x_{1}-x_{2})S_{\lambda}(x_{2}-x_{3})S_{\lambda}(x_{3}-x_{4})S_{\lambda}(x_{4}-x_{1})S_{\lambda}(x_{1}-x_{3})S_{\lambda}(x_{4}-x_{2}).\]
We have the following theorem:
\begin{theorem}\label{tetrtheorem}
    If \(d\geq 9\) and \(1<p_{1},p_{2},p_{3},p_{4}<\infty\) with \(\frac{1}{p_{1}}+\frac{1}{p_{2}}+\frac{1}{p_{3}}+\frac{1}{p_{4}}>1\), then there exist constants \(C_{p_{1},p_{2},p_{3},p_{4}}\) such that for all \(\lambda\in\mathcal{R}_{K_{4}}\) we have the \(\ell^{p}\) improving inequality
    \begin{equation}\label{Tetreq}\Ld_{K_{4}}(f_{1},f_{2},f_{3},f_{4})\leq C_{p_{1},p_{2},p_{3},p_{4}}\lambda^{\frac{d}{2}(1-\frac{1}{p_{1}}-\frac{1}{p_{2}}-\frac{1}{p_{3}}-\frac{1}{p_{4}})}\|f_{1}\|_{p_{1}}\|f_{2}\|_{p_{2}}\|f_{3}\|_{p_{3}}\|f_{4}\|_{p_{4}}\end{equation}
    provided \(\left(\frac{1}{p_{1}},\frac{1}{p_{2}},\frac{1}{p_{3}},\frac{1}{p_{4}}\right)\) is in the convex hull of the points:
\begin{multicols}{3}
    \begin{itemize}
            \item \((1,0,0,0)\)
            \item \((0,1,0,0)\)
            \item \((0,0,1,0)\)
            \item \((0,0,0,1)\)
            \columnbreak
            \item \((0, \frac{d-1}{d+1}, 0, \frac{d-1}{d+1})\)
            \item \((\frac{d-1}{d+1},\frac{d-1}{d+1},0,0)\)
            \item \((0,\frac{d-1}{d+1},\frac{d-1}{d+1},0)\)
            \columnbreak
            \item \((0,0,\frac{d-1}{d+1},\frac{d-1}{d+1})\)
            \item \((\frac{d-1}{d+1},0,0,\frac{d-1}{d+1})\)
            \item \((\frac{d-1}{d+1},0,\frac{d-1}{d+1},0)\).
        \end{itemize}
\end{multicols}
\end{theorem}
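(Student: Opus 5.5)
Following the proof of Theorem \ref{trithm}, I would reduce $\Ld_{K_{4}}$ to the one-edge form $\Ld_{P_{1}}$ on an arbitrary pair of vertices, bound the remaining two functions in $\ell^{\infty}$, and then use symmetry and multilinear interpolation to get the convex hull.

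Fix the pair $(x_{1},x_{2})$. Since $N_{K_{4}}\approx|S_{\lambda}||S_{\lambda}^{d-2}||S_{\lambda}^{d-3}|$, I would write
\[
\Ld_{K_{4}}(f_{1},f_{2},f_{3},f_{4})\approx\frac{1}{|S_{\lambda}|}\sum_{x_{1},x_{2}\in\Z^{d}}f_{1}(x_{1})f_{2}(x_{2})S_{\lambda}(x_{1}-x_{2})\,T(x_{1},x_{2}),
\]
where
\[
T(x_{1},x_{2})=\frac{1}{|S_{\lambda}^{d-2}||S_{\lambda}^{d-3}|}\sum_{x_{3},x_{4}}f_{3}(x_{3})f_{4}(x_{4})S_{\lambda}(x_{1}-x_{3})S_{\lambda}(x_{2}-x_{3})S_{\lambda}(x_{1}-x_{4})S_{\lambda}(x_{2}-x_{4})S_{\lambda}(x_{3}-x_{4}).
\]
Bound $T$ by $\|f_{3}\|_{\infty}\|f_{4}\|_{\infty}$ times the normalized count of pairs $(x_{3},x_{4})$ completing the tetrahedron over the edge $x_{1}x_{2}$.

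This count is the main step. For fixed $x_{1},x_{2}$ with $|x_{1}-x_{2}|=\lambda^{1/2}$, the point $x_{3}$ ranges over a $(d-2)$-dimensional sphere, giving $\lesssim|S_{\lambda}^{d-2}|$ choices. For each such $x_{3}$, the point $x_{4}$ lies on the intersection of three spheres, which is a $(d-3)$-dimensional sphere, giving $\lesssim|S_{\lambda}^{d-3}|$ choices. The upper bounds on the number of lattice points on these lower-dimensional spheres, uniform in the centre and in admissible $\lambda\in\mathcal{R}_{K_{4}}$, are exactly where I would use $d\geq 9$ and the Siegel/Raghavan/Kitaoka asymptotics quoted in Section \ref{size section}. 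Only upper bounds are needed, so I expect this to work. The uniformity of the slice counts over all admissible base edges is the main obstacle, and I would handle it by appealing to the simplex counting of \cite{CLM21}. This gives $T\lesssim\|f_{3}\|_{\infty}\|f_{4}\|_{\infty}$, and so
\[
\Ld_{K_{4}}\lesssim\Ld_{P_{1}}(f_{1},f_{2})\|f_{3}\|_{\infty}\|f_{4}\|_{\infty}.
\]

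Theorem \ref{Lthm} then yields (\ref{Tetreq}) with $p_{3}=p_{4}=\infty$ whenever $(\frac1{p_{1}},\frac1{p_{2}})$ lies in the convex hull of $(1,0)$, $(0,1)$ and $(\frac{d-1}{d+1},\frac{d-1}{d+1})$. Since $K_{4}$ is invariant under all permutations of its vertices, the same bound holds with any two exponents free and the other two equal to $\infty$. Together these cover the four unit vectors and the six points with two coordinates equal to $\frac{d-1}{d+1}$. The exponent $\frac d2(1-\sum\frac1{p_{i}})$ is affine in $(\frac1{p_{1}},\dots,\frac1{p_{4}})$, so multilinear (Riesz--Thorin type) interpolation among these estimates, for fixed $\lambda$, preserves the inequality on the whole convex hull listed in Theorem \ref{tetrtheorem}. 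The strict conditions $1<p_{i}<\infty$ and $\sum\frac1{p_{i}}>1$ only select the relevant part of that hull.

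Finally, sharpness of the exponent follows as for $K_{3}$. Take all $f_{i}=B_{\lambda}$; the completion counts over $x_{3},x_{4}$ within the ball are $\approx|S_{\lambda}^{d-2}||S_{\lambda}^{d-3}|$, so $\Ld_{K_{4}}\approx\lambda^{d/2}$.
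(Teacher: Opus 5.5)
Your route is the paper's argument with its two steps merged. The paper bounds $\Lambda_{K_4}\lesssim\|f_4\|_\infty\Lambda_{K_3}(f_1,f_2,f_3)$ by counting $x_4$ over a fixed triangle and then invokes Theorem~\ref{trithm}, whose proof bounds $\Lambda_{K_3}\lesssim\|f_3\|_\infty\Lambda_{P_1}(f_1,f_2)$ by counting $x_3$ over a fixed edge. You count $(x_3,x_4)$ over a fixed edge directly, and the vertex estimates, symmetry and interpolation are then the same.

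The gap is exactly the step you flag, and citing \cite{CLM21} or Siegel--Raghavan--Kitaoka cannot close it. Those results count $N_{K_4}$, i.e.\ completions averaged over base edges. You need $T(x_1,x_2)\lesssim\|f_3\|_\infty\|f_4\|_\infty$ for \emph{every} edge, and that is false. Take $\lambda=8b^2\in\mathcal{R}_{K_4}$ (the set $2b\cdot\{0,e_1+e_2,e_1+e_3,e_2+e_3\}$ is a regular tetrahedron of side $\lambda^{1/2}$), $x_1=0$ and $x_2=v=(2b,2b,0,\dots,0)$. The conditions $|x_3|^2=|x_3-v|^2=\lambda$ force $x_3=(b+t,b-t,y')$ with $y'\in\Z^{d-2}$ and $|y'|^2=6b^2-2t^2$. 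Since $r_{d-2}(N)\asymp N^{\frac{d-4}{2}}$ for $d-2\geq5$, summing over $|t|\leq b$ gives $\asymp\lambda^{\frac{d-3}{2}}$ choices of $x_3$, not $\lesssim|S^{d-2}_\lambda|\approx\lambda^{\frac{d-4}{2}}$. The reason is that the bisecting hyperplane $\{y\cdot v=\lambda/2\}$ of this edge carries a lattice of covolume $\sqrt2$, versus $\lambda^{1/2}$ for an edge with coprime coordinates. The same effect costs another $\lambda^{1/2}$ for $x_4$. Write $x_4=(b+s,b-s,z')$. For each of the $\asymp\lambda$ pairs $(t,s)$ with $|t|,|s|\leq b/10$, the number of $(y',z')\in(\Z^{d-2})^2$ with $|y'|^2=6b^2-2t^2$, $|z'|^2=6b^2-2s^2$, $y'\cdot z'=2b^2-2ts$ is $\asymp\lambda^{d-5}$. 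This is the Siegel--Kitaoka count that gives $N_{K_3}\approx\lambda^{d-3}$, now applied in $\Z^{d-2}$ with $d-2\geq7$. Hence this edge has $\approx\lambda^{d-4}$ completions, against the $|S^{d-2}_\lambda||S^{d-3}_\lambda|\approx\lambda^{d-5}$ your bound requires, so $T(0,v)\approx\lambda$ when $f_3=f_4\equiv1$.

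This is not a technicality that a sharper count can repair. Test (\ref{Tetreq}) with $f_1=1_{\{0\}}$, $f_2=1_{\{v\}}$, and $f_3=f_4$ the indicator of the ball of radius $2\lambda^{1/2}$ about the origin. The left side is $\approx\lambda^{d-4}/N_{K_4}\approx\lambda^{\frac{4-d}{2}}$. The right side is $\approx\lambda^{\frac d2(1-\frac1{p_1}-\frac1{p_2})}$, since $\|f_3\|_{p_3}\|f_4\|_{p_4}\approx\lambda^{\frac d2(\frac1{p_3}+\frac1{p_4})}$ cancels the $p_3,p_4$ part of the exponent. So (\ref{Tetreq}) fails as $b\to\infty$ whenever $\frac1{p_1}+\frac1{p_2}>2-\frac4d$. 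Because $\frac{2(d-1)}{d+1}=2-\frac4{d+1}>2-\frac4d$, the stated hull contains such points, with all $1<p_i<\infty$, next to $(\frac{d-1}{d+1},\frac{d-1}{d+1},0,0)$ and, by symmetry, next to the other five vertices of that type.

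The paper's proof has the same hole: it asserts precisely these uniform counts (the maximum over edges in the proof of Theorem~\ref{trithm}, and the $x_4$-count over an arbitrary triangle here). For $K_3$ the loss is only $\lambda^{1/2}$, and this test does not contradict Theorem~\ref{trithm}, although its proof is incomplete for the same reason. For $K_4$ the two losses compound. A correct result has to treat degenerate edges separately, and its region must be smaller near those six vertices.
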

\begin{proof}[Proof of Theorem \ref{tetrtheorem}]

We have that 
\begin{align*}
    \Ld_{K_{4}}(f_{1},f_{2},f_{3},f_{4})&\lesssim \frac{\|f_{4}\|_{\infty}}{|S_{\lambda}||S_{\lambda}^{d-2}||S_{\lambda}^{d-3}|}\sum_{x_{1},x_{2},x_{3}\in\Z^{d}}\prod_{i=1}^{3}f_{i}(x_{i})S_{\lambda}(x_{1}-x_{2})S_{\lambda}(x_{2}-x_{3})S_{\lambda}(x_{1}-x_{3})\\
    &\,\,\,\,\,\,\,\,\,\,\,\,\,\,\,\,\cdot \sum_{x_{4}\in\Z^{d}}S_{\lambda}(x_{3}-x_{4})S_{\lambda}(x_{4}-x_{1})S_{\lambda}(x_{4}-x_{2})\\
    &\lesssim \frac{\|f_{4}\|_{\infty}}{|S_{\lambda}||S_{\lambda}^{d-2}|}\sum_{x_{1},x_{2},x_{3}\in\Z^{d}}\prod_{i=1}^{3}f_{i}(x_{i})S_{\lambda}(x_{1}-x_{2})S_{\lambda}(x_{2}-x_{3})S_{\lambda}(x_{1}-x_{3})\\
    &\approx\|f_{4}\|_{\infty} \Ld_{K_{3}}(f_{1},f_{2},f_{3})
\end{align*}

By symmetry, we also get that \(\Ld_{K_{4}}(f_{1},f_{2},f_{3},f_{4})\lesssim \|f_{1}\|_{\infty} \Ld_{K_{3}}(f_{2},f_{3},f_{4})\), \(\Ld_{K_{4}}(f_{1},f_{2},f_{3},f_{4})\lesssim \|f_{2}\|_{\infty}  \Ld_{K_{3}}(f_{3},f_{4},f_{1})\), and \( \Ld_{K_{4}}(f_{1},f_{2},f_{3},f_{4})\lesssim \|f_{3}\|_{\infty}  \Ld_{K_{3}}(f_{4},f_{1},f_{2})\). Using this symmetry, interpolation, and Theorem \ref{trithm}, we get that \(\Ld_{K_{4}}(f_{1},f_{2},f_{3},f_{4})\lesssim \lambda^{\frac{d}{2}(1-\frac{1}{p_{1}}-\frac{1}{p_{2}}-\frac{1}{p_{3}}-\frac{1}{p_{4}})}\|f_{1}\|_{p_{1}}\|f_{2}\|_{p_{2}}\|f_{3}\|_{p_{3}}\|f_{4}\|_{p_{4}}\) when \((\frac{1}{p_{1}},\frac{1}{p_{2}},\frac{1}{p_{3}},\frac{1}{p_{4}})\) is in the convex hull of the points: 
    \begin{multicols}{3}
        \begin{itemize}
            \item \((1,0,0,0)\)
            \item \((0,1,0,0)\)
            \item \((0,0,1,0)\)
            \item \((0,0,0,1)\)
            \columnbreak
            \item \((0, \frac{d-1}{d+1}, 0, \frac{d-1}{d+1})\)
            \item \((\frac{d-1}{d+1},\frac{d-1}{d+1},0,0)\)
            \item \((0,\frac{d-1}{d+1},\frac{d-1}{d+1},0)\)
            \columnbreak
            \item \((0,0,\frac{d-1}{d+1},\frac{d-1}{d+1})\)
            \item \((\frac{d-1}{d+1},0,0,\frac{d-1}{d+1})\)
            \item \((\frac{d-1}{d+1},0,\frac{d-1}{d+1},0)\).
        \end{itemize}
    \end{multicols}
\end{proof}

\subsection{Exponent sharpness}

To show the sharpness of the exponent on \(\lambda\), take \(f_{1}(x)=f_{2}(x)=f_{3}(x)=f_{4}(x)=B_{\lambda}(x)\). Then \[\|f_{1}\|_{p_{1}}\|f_{2}\|_{p_{2}}\|f_{3}\|_{p_{3}}\|f_{4}\|_{p_{4}}\approx\lambda^{\frac{d}{2}(\frac{1}{p_{1}}+\frac{1}{p_{2}}+\frac{1}{p_{3}}+\frac{1}{p_{4}})}.\]
Additionally,
    \[\Ld_{K_{4}}(f_{1},f_{2},f_{3},f_{4})\approx\frac{1}{|S_{\lambda}||S_{\lambda}^{d-2}||S_{\lambda}^{d-3}|}\sum_{x_{1},...x_{4}\in\Z^{d}}\prod_{i=1}^{4}B_{\lambda}(x_{i})S_{K}(x_{1},x_{2},x_{3},x_{4}).\]
Through the same reasoning as we used for \(\Ld_{K_{3}}\), we have that \[\sum_{x_{1},x_{2},x_{3}\in\Z^{d}}B_{\lambda}(x_{4})S_{\lambda}(x_{1}-x_{4})S_{\lambda}(x_{2}-x_{4})S_{\lambda}(x_{3}-x_{4})\approx |S_{\lambda}^{d-3}|B_{\lambda}(x_{1})B_{\lambda}(x_{2})B_{\lambda}(x_{3}).\]
Thus, 
\begin{align*}
    \Ld_{K_{4}}(f_{1},f_{2},f_{3},f_{4})\approx&\frac{1}{|S_{\lambda}||S_{\lambda}^{d-2}||S_{\lambda}^{d-3}|}\sum_{x_{1},x_{2},x_{3}\in\Z^{d}}\prod_{i=1}^{3}B_{\lambda}(x_{i})S_{\lambda}(x_{1}-x_{2})S_{\lambda}(x_{2}-x_{3})S_{\lambda}(x_{1}-x_{3})\\
    &\,\,\,\,\,\,\,\,\,\,\,\,\,\cdot\sum_{x_{4}\in\Z^{d}}B_{\lambda}(x_{4})S_{\lambda}(x_{1}-x_{4})S_{\lambda}(x_{2}-x_{4})S_{\lambda}(x_{3}-x_{4})\\
    \approx &\frac{1}{|S_{\lambda}||S_{\lambda}^{d-2}|}\sum_{x_{1},x_{2},x_{3}\in\Z^{d}}\prod_{i=1}^{3}B_{\lambda}(x_{i})S_{\lambda}(x_{1}-x_{2})S_{\lambda}(x_{2}-x_{3})S_{\lambda}(x_{1}-x_{3})\\
    =&\frac{1}{|S_{\lambda}||S_{\lambda}^{d-2}|}\sum_{x_{1},x_{2}\in\Z^{d}}B_{\lambda}(x_{1})B_{\lambda}(x_{2})S_{\lambda}(x_{1}-x_{2})\sum_{x_{3}\in\Z^{d}}B_{\lambda}(x_{3})S_{\lambda}(x_{1}-x_{3})S_{\lambda}(x_{2}-x_{3})\\
    \approx& \frac{1}{|S_{\lambda}|}\sum_{x_{1},x_{2}\in\Z^{d}}B_{\lambda}(x_{1})B_{\lambda}(x_{2})S_{\lambda}(x_{1}-x_{2})\\
    =& \sum_{x_{1}\in\Z^{d}}B_{\lambda}(x_{1})A_{\lambda}B_{\lambda}(x_{2})\approx  \sum_{x_{1}\in\Z^{d}}B_{\lambda}(x_{1})
    \\\approx&\lambda^{\frac{d}{2}}\approx\lambda^{\frac{d}{2}(1-\frac{1}{p_{1}}-\frac{1}{p_{2}}-\frac{1}{p_{3}}-\frac{1}{p_{4}})}\|f_{1}\|_{p_{1}}\|f_{2}\|_{p_{2}}\|f_{3}\|_{p_{3}}\|f_{4}\|_{p_{4}}
\end{align*}

Thus, the exponent on \(\lambda\) of \(\frac{d}{2}(1-\frac{1}{p_{1}}-\frac{1}{p_{2}}-\frac{1}{p_{3}}-\frac{1}{p_{4}})\) is the best that we can guarantee.

\subsection{Improvement on previous results}\label{tetr improvement}
For the averaging operator based on the tetrahedron, 
\[A_{K_{4}}(f,g,h)(x)=\frac{1}{\lambda^{\frac{3d-12}{2}}}\sum_{x_{1},x_{2},x_{3}\in\Z^{d}}f(x_{1})g(x_{2})h(x_{3})S_{K_{4}}(x,x_{1},x_{2},x_{3}),\]
Anderson, Kumchev, and Palsson \cite{AKP24} gave two propositions. First, they have that if \(d\geq 9\), \(2<p_{1}\leq p_{2}\leq p_{3}\), and \(\frac{1}{p_{1}}+\frac{1}{p_{2}}+\frac{1}{p_{3}}=1\), then 
\[\|A_{K_{4}}(f_{1},f_{2},f_{3})\|_{1}\lesssim\lambda^{\frac{6-dj}{2}+d(\sum_{i\leq j}\frac{1}{p_{i}})}\prod_{i=1}^{j}\|f_{i}\|_{p_{i}'}\prod_{i=j+1}^{3}\|f_{i}\|_{p_{i}}\]
where \(j\geq1\) is the largest index for which \(p_{i}<\frac{1}{2}(d+1)\). \\

If \(j=1\), this bound is 
\[\|A_{K_{4}}(f_{1},f_{2},f_{3})\|_{1}\lesssim\lambda^{\frac{6-d}{2}+d(\frac{1}{p_{1}})}\|f_{1}\|_{p_{1}'}\|f_{2}\|_{p_{2}}\|f_{3}\|_{p_{3}}=\lambda^{3+\frac{d}{2}(1-\frac{1}{p_{1}'}+\frac{1}{p_{2}}+\frac{1}{p_{3}})}\|f_{1}\|_{p_{1}'}\|f_{2}\|_{p_{2}}\|f_{3}\|_{p_{3}}.\]
If we apply Theorem \ref{tetrtheorem} in this setting, letting our \(p_{1}\) be their \(p_{1}'\), our \(p_{2}, p_{3}\) be the same as theirs, and \(p_{4}=\infty\), we get a decay term of \(\lambda^{\frac{d}{2}(1-\frac{1}{p_{1}'}-\frac{1}{p_{2}}-\frac{1}{p_{3}})}\), which we can see is strictly better than the improvement in \cite{AKP24}. In the cases of \(j=2\) and \(j=3\), the results are the same.\\

In the next proposition, Anderson, Kumchev, and Palsson have that if \(d\geq 9\), \(\frac{d+1}{d-1}<p_{1}<2<p_{2}\leq p_{3}\), and \(\frac{1}{p_{1}}+\frac{1}{p_{2}}+\frac{1}{p_{3}}=1\), then 
\[\|A_{K_{4}}(f_{1},f_{2},f_{3})\|_{1}\lesssim\lambda^{\frac{6-dj}{2}+\frac{d}{p_{1}'}+d\sum_{1<i\leq j}\frac{1}{p_{i}}}\|f_{1}\|_{p_{1}}\|f_{2}\|_{p_{2}'}\|f_{3}\|_{p_{3}'},\] where \(j\geq1\) is the largest integer such that \(p_{i}<\frac{1}{2}(d+1)\).\\
If \(j=1\), this bound is 
\[\|A_{K_{4}}(f_{1},f_{2},f_{3})\|_{1}\lesssim\lambda^{\frac{6-d}{2}+\frac{d}{p_{1}'}}\|f_{1}\|_{p_{1}}\|f_{2}\|_{p_{2}'}\|f_{3}\|_{p_{3}'}=\lambda^{3+\frac{d}{2}+\frac{d}{2}(1-\frac{1}{p_{1}}-\frac{1}{p_{2}'}-\frac{1}{p_{3}'})}\|f_{1}\|_{p_{1}}\|f_{2}\|_{p_{2}'}\|f_{3}\|_{p_{3}'}.\]

Then, when \(j=2\), the bound is
\[\|A_{K_{4}}(f_{1},f_{2},f_{3})\|_{1}\lesssim\lambda^{\frac{6-2d}{2}+\frac{d}{p_{1}'}+\frac{d}{p_{2}}}\|f_{1}\|_{p_{1}}\|f_{2}\|_{p_{2}'}\|f_{3}\|_{p_{3}'}=\lambda^{3+\frac{d}{2}+\frac{d}{2}(1-\frac{1}{p_{1}}-\frac{1}{p_{2}'}-\frac{1}{p_{3}'})}\|f_{1}\|_{p_{1}}\|f_{2}\|_{p_{2}'}\|f_{3}\|_{p_{3}'},\]
and when \(j=3\) the bound is
\[\|A_{K_{4}}(f_{1},f_{2},f_{3})\|_{1}\lesssim\lambda^{\frac{6-3d}{2}+\frac{d}{p_{1}'}+\frac{d}{p_{2}}+\frac{d}{p_{3}}}\|f_{1}\|_{p_{1}}\|f_{2}\|_{p_{2}'}\|f_{3}\|_{p_{3}'}=\lambda^{3+\frac{3d}{2}+\frac{d}{2}(1-\frac{1}{p_{1}}-\frac{1}{p_{2}'}-\frac{1}{p_{3}'})}\|f_{1}\|_{p_{1}}\|f_{2}\|_{p_{2}'}\|f_{3}\|_{p_{3}'}.\]

If we apply Theorem \ref{tetrtheorem} in this setting, letting our \(p_{1}\) be the same as theirs, our \(p_{2}\) and \( p_{3}\) be their \(p_{2}'\) and \(p_{3}'\), and \(p_{4}=\infty\), we get a decay term of \(\lambda^{\frac{d}{2}(1-\frac{1}{p_{1}}-\frac{1}{p_{2}'}-\frac{1}{p_{3}'})}\), which we can see is strictly better than the improvement in \cite{AKP24}. \\

Additionally, compared to both propositions we have a wider range for the choices of \(p_{1},p_{2},p_{3}\).


\section{The Diamond with a Diagonal, \(C_{4+t}\)} \label{Diamond+ section}

We define
\[\Ld_{C_{4+t}}(f_{1},f_{2},f_{3},f_{4})=\frac{1}{N_{C_{4+t}}}\sum_{x_{1},x_{2},x_{3},x_{4}\in\Z^{d}}\left(\prod_{i=1}^{4}f_{i}(x_{i})\right)S_{C_{4+t}}(x_{1},x_{2},x_{3},x_{4}),\]

where \[S_{C_{4+t}}(x_{1},x_{2},x_{3},x_{4})=S_{\lambda}(x_{1}-x_{2})S_{\lambda}(x_{2}-x_{3})S_{\lambda}(x_{3}-x_{4})S_{\lambda}(x_{4}-x_{1})S_{\lambda}(x_{3}-x_{1}).\]

\begin{theorem}\label{Diamond+thm}
    Suppose that \(1<p_{1},p_{2},p_{3},p_{4}\).
   If \(d\geq 7\) and \(1<p_{1},p_{2},p_{3},p_{4}\) with \(\frac{1}{p_{1}}+\frac{1}{p_{2}}+\frac{1}{p_{3}}+\frac{1}{p_{4}}>1\), then there exist constants \(C_{p_{1},p_{2},p_{3},p_{4}}\) such that for all \(\lambda\in\mathcal{R}_{C_{4+t}}\) we have the \(\ell^{p}\) improving inequality
    \begin{equation}\label{Diamond+eq}\Ld_{C_{4+t}}(f_{1},f_{2},f_{3},f_{4})\leq C_{p_{1},p_{2},p_{3}}\lambda^{\frac{d}{2}(1-\frac{1}{p_{1}}-\frac{1}{p_{2}}-\frac{1}{p_{3}}-\frac{1}{p_{4}})}\|f_{1}\|_{p_{1}}\|f_{2}\|_{p_{2}}\|f_{3}\|_{p_{3}}\|f_{4}\|_{p_{4}}\end{equation}
    provided that the point \((\frac{1}{p_{1}},\frac{1}{p_{2}},\frac{1}{p_{3}},\frac{1}{p_{4}})\) is in the convex hull of the following points: 
    \begin{multicols}{3}
        \begin{itemize}
            \item \((1,0,0,0)\)
            \item \((0,1,0,0)\)
            \item \((0,0,1,0)\)
            \columnbreak
            \item \((0,0,0,1)\)
            \item \((\frac{d-1}{d+1},\frac{d-1}{d+1},0,0)\)
            \item \((0,\frac{d-1}{d+1},\frac{d-1}{d+1},0)\)
            \columnbreak
            \item \((0,0,\frac{d-1}{d+1},\frac{d-1}{d+1})\)
            \item \((\frac{d-1}{d+1},0,0,\frac{d-1}{d+1})\)
            \item \((\frac{d-1}{d+1},0,\frac{d-1}{d+1},0)\)
        \end{itemize}
    \end{multicols}
\end{theorem}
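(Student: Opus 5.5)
We may assume every \(f_{i}\geq 0\), since replacing \(f_{i}\) by \(|f_{i}|\) only increases the form. The plan is to copy the reduction used for \(\Ld_{K_{4}}\): bound one function in \(\ell^{\infty}\), sum out its vertex, and land on a triangle form controlled by Theorem \ref{trithm}.

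The edges of \(C_{4+t}\) are \(12,23,34,41\) and the diagonal \(13\). Hence \(C_{4+t}\) is the union of the two triangles \((x_{1},x_{2},x_{3})\) and \((x_{1},x_{3},x_{4})\), glued along the edge \(x_{1}x_{3}\). Since \(N_{C_{4+t}}\approx|S_{\lambda}||S_{\lambda}^{d-2}|^{2}\) by Section \ref{size section}, I would write
\[
\Ld_{C_{4+t}}(f_{1},f_{2},f_{3},f_{4})\lesssim\frac{\|f_{4}\|_{\infty}}{|S_{\lambda}||S_{\lambda}^{d-2}|^{2}}\sum_{x_{1},x_{2},x_{3}}f_{1}(x_{1})f_{2}(x_{2})f_{3}(x_{3})S_{K_{3}}(x_{1},x_{2},x_{3})\sum_{x_{4}}S_{\lambda}(x_{3}-x_{4})S_{\lambda}(x_{4}-x_{1}).
\]

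The inner sum over \(x_{4}\) counts lattice points on the intersection of two spheres of radius \(\lambda^{1/2}\) whose centers are exactly \(\lambda^{1/2}\) apart, because \(S_{\lambda}(x_{1}-x_{3})=1\) on the support. This is the same configuration used in the proof of Theorem \ref{trithm}. For \(d\geq 7\) it is uniformly \(\lesssim|S_{\lambda}^{d-2}|\) (cf. \cite{AKP22}). Cancelling this factor against one \(|S_{\lambda}^{d-2}|\) in the normalization gives
\[
\Ld_{C_{4+t}}(f_{1},f_{2},f_{3},f_{4})\lesssim\|f_{4}\|_{\infty}\Ld_{K_{3}}(f_{1},f_{2},f_{3}).
\]
By the symmetry \(x_{2}\leftrightarrow x_{4}\) of the graph, the same argument gives
\[
\Ld_{C_{4+t}}(f_{1},f_{2},f_{3},f_{4})\lesssim\|f_{2}\|_{\infty}\Ld_{K_{3}}(f_{1},f_{3},f_{4}).
\]

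Next I would apply Theorem \ref{trithm} to each bound. The exponent \(\frac{d}{2}(1-\sum_{i}\frac{1}{p_{i}})\) is unchanged when some \(\frac{1}{p_{i}}=0\). The first bound therefore gives (\ref{Diamond+eq}) on the convex hull of \(e_{1},e_{2},e_{3}\), \((\frac{d-1}{d+1},\frac{d-1}{d+1},0,0)\), \((0,\frac{d-1}{d+1},\frac{d-1}{d+1},0)\) and \((\frac{d-1}{d+1},0,\frac{d-1}{d+1},0)\). The second gives it on the analogous hull built from \(e_{1},e_{3},e_{4}\) and the pairs \(\{1,3\}\), \(\{3,4\}\), \(\{1,4\}\). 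Together these contain all nine listed points: the four unit vectors, plus the five pairs \(12,23,34,14,13\). Note that the pair \(24\) does not appear, matching the missing edge.

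To fill in the full convex hull, I would use multilinear interpolation (real or complex) for the nonnegative form \(\Ld_{C_{4+t}}\) at fixed \(\lambda\). The bound at each vertex is \(C\lambda^{\frac{d}{2}(1-\sum_{i}1/p_{i})}\), whose exponent is affine in \((\frac{1}{p_{1}},\dots,\frac{1}{p_{4}})\). Interpolating between two such points therefore yields exactly the same exponent formula at every intermediate point, with constants independent of \(\lambda\). The condition \(\sum_{i}\frac{1}{p_{i}}>1\) holds throughout the hull away from its base face.

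The main obstacle I expect is the uniform fiber estimate \(\sum_{x_{4}}S_{\lambda}(x_{3}-x_{4})S_{\lambda}(x_{4}-x_{1})\lesssim|S_{\lambda}^{d-2}|\), uniform over all pairs with \(|x_{1}-x_{3}|^{2}=\lambda\) and all \(\lambda\in\mathcal{R}_{C_{4+t}}\). I would take it from the asymptotics for lower-dimensional spheres in \cite{AKP22} with \(d\geq 7\), exactly as in the triangle proof. A lesser technical point is that the endpoints \(p_{i}=\infty\) and \(p_{i}=1\) appear among the interpolation vertices. These are fine as inputs for multilinear Riesz–Thorin, and the final statement only claims the open range \(1<p_{i}\).
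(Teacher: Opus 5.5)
Your proposal is correct and is essentially the paper's proof: bound $f_{4}$ (resp.\ $f_{2}$) in $\ell^{\infty}$, use the uniform fiber bound $\sum_{x_{4}}S_{\lambda}(x_{3}-x_{4})S_{\lambda}(x_{4}-x_{1})\lesssim|S_{\lambda}^{d-2}|$ to get $\Ld_{C_{4+t}}\lesssim\|f_{4}\|_{\infty}\Ld_{K_{3}}(f_{1},f_{2},f_{3})$ and $\Ld_{C_{4+t}}\lesssim\|f_{2}\|_{\infty}\Ld_{K_{3}}(f_{1},f_{3},f_{4})$, then apply Theorem \ref{trithm} and interpolate, using that the exponent is affine in $(\frac{1}{p_{1}},\dots,\frac{1}{p_{4}})$. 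One small correction to your last remark: the vertices with entries $\frac{d-1}{d+1}$ are not themselves known bounds, since the spherical input holds only for the open range $p>\frac{d+1}{d-1}$, so you should interpolate from points of the open region near those vertices rather than from the vertices themselves (the paper has the same looseness).
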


\begin{proof}[Proof of Theorem \ref{Diamond+thm}]
To start, we will rearrange \(\Ld_{C_{4+t}}\).
\begin{align*}
    \Ld_{C_{4+t}}(f_{1},f_{2},f_{3},f_{4})&\approx\frac{1}{|S_{\lambda}||S_{\lambda}^{d-2}|^2}\sum_{x_{1},x_{2},x_{3},x_{4}\in\Z^{d}}\left(\prod_{i=1}^{4}f_{i}(x_{i})\right)S_{\lozenge+t}(x_{1},x_{2},x_{3},x_{4})\\
    &=\frac{1}{|S_{\lambda}||S_{\lambda}^{d-2}|^{2}}\sum_{x_{1},x_{2},x_{3}\in\Z^{d}}\left(\prod_{i=1}^{3}f_{i}(x_{i})\right)S_{\lambda}(x_{1}-x_{2})S_{\lambda}(x_{2}-x_{3})S_{\lambda}(x_{3}-x_{1})\\
    & \,\,\,\,\,\,\,\,\,\,\,\,\,\cdot\left(\sum_{x_{4}\in\Z^{d}}f_{4}(x_{4})S_{\lambda}(x_{3}-x_{4})S_{\lambda}(x_{4}-x_{1})\right)\\
    &\leq \frac{\|f_{4}\|_{\infty}}{|S_{\lambda}||S_{\lambda}^{d-2}|^{2}}\sum_{x_{1},x_{2},x_{3}\in\Z^{d}}\left(\prod_{i=1}^{3}f_{i}(x_{i})\right)S_{\lambda}(x_{1}-x_{2})S_{\lambda}(x_{2}-x_{3})S_{\lambda}(x_{3}-x_{1})\\
    & \,\,\,\,\,\,\,\,\,\,\,\,\,\cdot\left(\sum_{x_{4}\in\Z^{d}}S_{\lambda}(x_{3}-x_{4})S_{\lambda}(x_{4}-x_{1})\right)\\
    &\lesssim\frac{\|f_{4}\|_{\infty}}{|S_{\lambda}||S_{\lambda}^{d-2}|}\sum_{x_{1},x_{2},x_{3}\in\Z^{d}}\left(\prod_{i=1}^{3}f_{i}(x_{i})\right)S_{\lambda}(x_{1}-x_{2})S_{\lambda}(x_{2}-x_{3})S_{\lambda}(x_{3}-x_{1})\\
    &\approx\|f_{4}\|_{\infty}\Lambda_{K_{3}}(f_{1},f_{2},f_{3})
\end{align*}

By the same method, 
\[\Ld_{C_{4+t}}(f_{1},f_{2},f_{3},f_{4})\lesssim \|f_{2}\|_{\infty}\Ld_{K_{3}}(f_{1},f_{4},f_{3}).\]

Using this symmetry, Theorem \ref{trithm}, and interpolation we get that
\[\Ld_{C_{4+t}}(f_{1},f_{2},f_{3},f_{4})\lesssim\lambda^{\frac{d}{2}(1-\frac{1}{p_{1}}-\frac{1}{p_{2}}-\frac{1}{p_{3}}-\frac{1}{p_{4}})}\|f_{1}\|_{p_{1}}\|f_{2}\|_{p_{2}}\|f_{3}\|_{p_{3}}\|f_{4}\|_{p_{4}}\]
when \(\left(\frac{1}{p_{1}},\frac{1}{p_{2}},\frac{1}{p_{3}},\frac{1}{p_{4}}\right)\) is in the convex hull of points listed in Theorem \ref{Diamond+thm}.

\end{proof}

\subsection{Exponent Sharpness} 
To show that \(\frac{d}{2}(1-\frac{1}{p_{1}}-\frac{1}{p_{2}}-\frac{1}{p_{3}}-\frac{1}{p_{4}})\) is the best exponent on \(\lambda\) that we can expect, we let \(f_{1}(x)=f_{2}(x)=f_{3}(x)=f_{4}(x)=B_{\lambda}(x)\). Then \[\|f_{1}\|_{p_{1}}\|f_{2}\|_{p_{2}}\|f_{3}\|_{p_{3}}\|f_{4}\|_{p_{4}}\approx\lambda^{\frac{d}{2}(\frac{1}{p_{1}}+\frac{1}{p_{2}}+\frac{1}{p_{3}}+\frac{1}{p_{4}})},\]
and
\begin{align*}
    \Ld_{C_{4+t}}&(f_{1},f_{2},f_{3},f_{4})\\
    &\approx\frac{1}{|S_{\lambda}||S_{\lambda}^{d-2}|^{2}}\sum_{x_{1},\dots,x_{4}\in\Z^{d}}\left(\prod_{i=1}^{4}B_{\lambda}(x_{i})\right)S_{C_{4+t}}(x_{1},x_{2},x_{3},x_{4})\\
    &=\frac{1}{|S_{\lambda}||S_{\lambda}^{d-2}|}\sum_{x_{1},x_{2},x_{3}\in\Z^{d}}\left(\prod_{i=1}^{3}B_{\lambda}(x_{i})\right)S_{\lambda}(x_{1}-x_{2})S_{\lambda}(x_{2}-x_{3})S_{\lambda}(x_{3}-x_{1})\\
    &\,\,\,\,\,\,\,\,\,\,\,\,\,\,\,\,\,\,\cdot\left(\frac{1}{|S_{\lambda}^{d-2}|}\sum_{x_{4}\in\Z^{d}}B_{\lambda}(x_{4})S_{\lambda}(x_{3}-x_{4})S_{\lambda}(x_{4}-x_{3})\right)\\
    &\approx \frac{1}{|S_{\lambda}||S_{\lambda}^{d-2}|}\sum_{x_{1},x_{2}\in\Z^{d}}B_{\lambda}(x_{1})B_{\lambda}(x_{2})S_{\lambda}(x_{1}-x_{2})B_{\lambda}(x_{3})S_{\lambda}(x_{2}-x_{3})S_{\lambda}(x_{3}-x_{1})\\
    &= \frac{1}{|S_{\lambda}|}\sum_{x_{1},x_{2}\in\Z^{d}}B_{\lambda}(x_{1})B_{\lambda}(x_{2})S_{\lambda}(x_{1}-x_{2})\frac{1}{|S_{\lambda}^{d-2}|}\sum_{x_{3}\in\Z^{d}}B_{\lambda}(x_{3})S_{\lambda}(x_{2}-x_{3})S_{\lambda}(x_{3}-x_{1})\\
    &\approx\frac{1}{|S_{\lambda}|}\sum_{x_{1},x_{2}\in\Z^{d}}B_{\lambda}(x_{1})B_{\lambda}(x_{2})S_{\lambda}(x_{1}-x_{2})\\
    &=\sum_{x_{1}\in\Z^{d}}B_{\lambda}(x_{1})\frac{1}{|S_{\lambda}|}\sum_{x_{2}\in\Z^{d}}B_{\lambda}(x_{2})S_{\lambda}(x_{1}-x_{2})\\
    &\approx \sum_{x_{1}\in\Z^{d}}B_{\lambda}(x_{1})\\
    &\approx \lambda^{\frac{d}{2}}\approx \lambda^{\frac{d}{2}(1-\frac{1}{p_{1}}-\frac{1}{p_{2}}-\frac{1}{p_{3}}-\frac{1}{p_{4}})}\|f_{1}\|_{p_{1}}\|f_{2}\|_{p_{2}}\|f_{3}\|_{p_{3}}\|f_{4}\|_{p_{4}}
\end{align*}
Thus, the exponent on \(\lambda\) is sharp.\\

\subsection{Addressing Question \ref{mainquestion}}\label{diamond+question}

Take \(f_{1}=f_{3}=S_{\lambda}\) and \(f_{2}=f_{4}=1_{\{0\}}\). Then \[\|f_{1}\|_{p_{1}}\|f_{2}\|_{p_{2}}\|f_{3}\|_{p_{3}}\|f_{4}\|_{p_{4}}\approx\lambda^{\frac{d-2}{2}(\frac{1}{p_{1}}+\frac{1}{p_{3}})},\]

and so 
\begin{align*}
    \Ld_{C_{4+t}}(f_{1},f_{2},f_{3},f_{4})&\approx\frac{1}{|S_{\lambda}||S_{\lambda}^{d-2}|^{2}}\sum_{x_{1},...,x_{4}\in\Z^{d}} S_{\lambda}(x_{1})S_{\lambda}(x_{3})1_{\{0\}}(x_{2})1_{\{0\}}(x_{4})S_{\lozenge+t}(x_{1},x_{2},x_{3},x_{4})\\
    &=\frac{1}{|S_{\lambda}||S_{\lambda}^{d-2}|^{2}}\sum_{x_{1},x_{3}\in\Z^{d}} S_{\lambda}(x_{1})S_{\lambda}(x_{3})S_{\lambda}(x_{1}-x_{3})\\
    &=\frac{1}{|S_{\lambda}||S_{\lambda}^{d-2}|^{2}}\sum_{x_{1}\in\Z^{d}} S_{\lambda}(x_{1})\sum_{x_{3}\in\Z^{d}}S_{\lambda}(x_{3})S_{\lambda}(x_{1}-x_{3})\\
    &\approx \frac{|S_{\lambda}||S_{\lambda}^{d-2}|}{|S_{\lambda}||S_{\lambda}^{d-2}|^{2}} \approx \lambda^{-\frac{d-4}{2}}
\end{align*}

Thus, in order for the inequality in Theorem \ref{Diamond+thm} to hold, it must be that \[\lambda^{-\frac{d-4}{2}}\lesssim \lambda^{\frac{d}{2}(1-\frac{1}{p_{1}}-\frac{1}{p_{2}}-\frac{1}{p_{3}}-\frac{1}{p_{4}})}) \lambda^{\frac{d-2}{2}(\frac{1}{p_{1}}+\frac{1}{p_{3}})}.\]

Take the point where \(\frac{1}{p_{1}}=\frac{1}{p_{3}}=0\) and \(\frac{1}{p_{2}}=\frac{1}{p_{4}}=\frac{d-1}{d+1}\). Note that this is one of the points that makes up the convex hull of the range where Theorem \ref{tetrtheorem} holds for \(\Ld_{K_{4}}\). However, at this point, 
\[\frac{d}{2}(1-\frac{1}{p_{1}}-\frac{1}{p_{2}}-\frac{1}{p_{3}}-\frac{1}{p_{4}})+\frac{d-2}{2}(\frac{1}{p_{1}}+\frac{1}{p_{3}})=\frac{d}{2}(1-\frac{2(d-1)}{d+1})=2(\frac{d}{d+1})-\frac{d}{2}<-\frac{d-4}{2}.\]

Thus, there is some \(\epsilon>0\) small such that when \(\left(\frac{1}{p_{1}},\frac{1}{p_{2}},\frac{1}{p_{3}},\frac{1}{p_{4}}\right)=\left(0,\frac{d-1}{d+1}-\epsilon,0,\frac{d-1}{d+1}-\epsilon\right)\), we have that the \(\ell^{p}\) improving inequality holds with the best exponent for \(\Ld_{K_{4}}\), but does not hold for \(\Ld_{C_{4+t}}\). So the answer to Question \ref{mainquestion} is negative. Note that this example is one that was not addressed by Bhowmik, Iosevich, Koh, and Pham.


\section{The Diamond, \(C_{4}\)} \label{Diamond section}

Define 
\[\Ld_{C_{4}}(f_{1},f_{2},f_{3},f_{4})=\frac{1}{N_{C_{4}}}\sum_{x_{1},x_{2},x_{3},x_{4}\in\Z^{d}}\left(\prod_{i=1}^{4}f_{i}(x_{i})\right)S_{C_{4}}(x_{1},x_{2},x_{3},x_{4})\]

where \[S_{C_{4}}(x_{1},x_{2},x_{3},x_{4})=S_{\lambda}(x_{1}-x_{2})S_{\lambda}(x_{2}-x_{3})S_{\lambda}(x_{3}-x_{4})S_{\lambda}(x_{4}-x_{1})\]

\begin{theorem}\label{Diamondthm}
    Suppose that \(1<p_{1},p_{2},p_{3},p_{4}\). If \(d\geq 5\) and \(1<p_{1},p_{2},p_{3},p_{4}\) with \(\frac{1}{p_{1}}+\frac{1}{p_{2}}+\frac{1}{p_{3}}+\frac{1}{p_{4}}>1\), then there exist constants \(C_{p_{1},p_{2},p_{3},p_{4}}\) such that for all \(\lambda\in\mathcal{R}_{C_{4}}\) we have the \(\ell^{p}\) improving inequality
    \begin{equation}\label{Diamondeq}\Ld_{C_{4}}(f_{1},f_{2},f_{3},f_{4})\leq C_{p_{1},p_{2},p_{3}}\lambda^{\frac{d}{2}(1-\frac{1}{p_{1}}-\frac{1}{p_{2}}-\frac{1}{p_{3}}-\frac{1}{p_{4}})}\|f_{1}\|_{p_{1}}\|f_{2}\|_{p_{2}}\|f_{3}\|_{p_{3}}\|f_{4}\|_{p_{4}}\end{equation}
    provided that the point \((\frac{1}{p_{1}},\frac{1}{p_{2}},\frac{1}{p_{3}},\frac{1}{p_{4}})\) is in the convex hull of the following points: 
    \begin{multicols}{3}
        \begin{itemize}
            \item \((1,0,0,0)\)
            \item \((0,1,0,0)\)
            \item \((0,0,1,0)\)
            \columnbreak
            \item \((0,0,0,1)\)
            \item \((\frac{d-1}{d+1},\frac{d-1}{d+1},0,0)\)
            \item \((0,\frac{d-1}{d+1},\frac{d-1}{d+1},0)\)
            \columnbreak
            \item \((0,0,\frac{d-1}{d+1},\frac{d-1}{d+1})\)
            \item \((\frac{d-1}{d+1},0,0,\frac{d-1}{d+1})\)
        \end{itemize}
    \end{multicols}
     
\end{theorem}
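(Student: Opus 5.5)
Following the pattern used for \(K_{4}\) and \(C_{4+t}\), I would reduce \(\Ld_{C_{4}}\) to a smaller form by bounding one function in \(\ell^{\infty}\) and summing out its vertex. Here the natural reduction is to the 2-chain \(P_{2}\) rather than the triangle, since removing a vertex from the diamond leaves a path. Recall from Section \ref{size section} that \(N_{C_{4}}\approx|S_{\lambda}|^{2}|S_{\lambda}^{d-2}|\).

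For instance, summing out \(x_{3}\) gives
\begin{align*}
\Ld_{C_{4}}(f_{1},f_{2},f_{3},f_{4})&\lesssim\frac{\|f_{3}\|_{\infty}}{|S_{\lambda}|^{2}|S_{\lambda}^{d-2}|}\sum_{x_{1},x_{2},x_{4}\in\Z^{d}}f_{1}(x_{1})f_{2}(x_{2})f_{4}(x_{4})S_{\lambda}(x_{1}-x_{2})S_{\lambda}(x_{4}-x_{1})\\
&\qquad\cdot\sum_{x_{3}\in\Z^{d}}S_{\lambda}(x_{2}-x_{3})S_{\lambda}(x_{3}-x_{4}).
\end{align*}
The inner sum counts lattice points on the intersection of two spheres of radius \(\lambda^{1/2}\) centered at \(x_{2},x_{4}\). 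That intersection is a \((d-2)\)-dimensional sphere of radius at most \(\lambda^{1/2}\), so the inner sum is \(\lesssim|S_{\lambda}^{d-2}|\) uniformly in \(x_{2},x_{4}\). This is exactly the upper bound already used in Section \ref{size section}.

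Two cases need separate attention. When \(x_{2}=x_{4}\), the count is instead \(|S_{\lambda}|\), which is larger. Since that diagonal has only one free point instead of two, I expect it to contribute at most a lower-order term. I would isolate it and check that it is dominated, e.g.\ by bounding it via \(\|f_{3}\|_{\infty}\) times a \(P_{1}\)-type form with an extra factor \(\lambda^{-(d-2)/2}\cdot|S_{\lambda}|/|S_{\lambda}^{d-2}|\approx\lambda^{-(d-4)/2}\). Handling this diagonal carefully is the main obstacle. If the crude bound fails on part of the region, I would fall back on a uniform bound for the number of lattice points on the spheres involved. The nondegeneracy of \(d\)-dimensional counts for \(d\geq 5\) should give \(\lesssim|S_{\lambda}^{d-2}|\) off the diagonal, uniformly even for small intersection radius, since the count of points on lower-dimensional spheres is bounded by a power of the radius.

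Granting this, we get
\[\Ld_{C_{4}}(f_{1},f_{2},f_{3},f_{4})\lesssim\|f_{3}\|_{\infty}\Ld_{P_{2}}(f_{2},f_{1},f_{4}),\]
where the chain is \(x_{2}-x_{1}-x_{4}\) with \(x_{1}\) as the middle vertex. By the cyclic symmetry of \(C_{4}\), this yields four such bounds, with \(f_{1},f_{2},f_{3},f_{4}\) in turn placed in \(\ell^{\infty}\). Now apply Theorem \ref{2chain} to each. Its region contains the vertices \((1,0,0)\), \((0,1,0)\), \((0,0,1)\) and the two points with \(\frac{d-1}{d+1}\) on the middle vertex and one endpoint.

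Embedding these vertices in four coordinates (with a \(0\) in the slot of the \(\ell^{\infty}\) function) produces the points:
\begin{itemize}
\item the coordinate vertices;
\item \(\frac{d-1}{d+1}\) on each adjacent pair \((1,2)\), \((2,3)\), \((3,4)\), \((4,1)\).
\end{itemize}
These are exactly the listed points. Note that \((\frac{d-1}{d+1},0,\frac{d-1}{d+1},0)\) does not appear, since \(x_{1},x_{3}\) are never adjacent. Multilinear interpolation (complex interpolation of the four estimates) then gives (\ref{Diamondeq}) on the convex hull. The exponent \(\frac{d}{2}(1-\sum\frac{1}{p_{i}})\) is preserved because it is affine in \((\frac{1}{p_{i}})\) and \(1/\infty=0\).
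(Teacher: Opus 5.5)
Your strategy is the paper's up to relabeling. The paper bounds \(f_{4}\) in \(\ell^{\infty}\), sums out \(x_{4}\) and splits off \(x_{1}=x_{3}\); you do the same with \(f_{3}\), \(x_{3}\) and \(x_{2}=x_{4}\), then use the cyclic symmetry and interpolate.

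The gap is the diagonal, which is not a lower-order term. The inequality \(\Ld_{C_{4}}(f_{1},f_{2},f_{3},f_{4})\lesssim\|f_{3}\|_{\infty}\Ld_{P_{2}}(f_{2},f_{1},f_{4})\) that you grant is false. Take \(f_{1}=f_{3}=S_{\lambda}\) and \(f_{2}=f_{4}=1_{\{0\}}\), as in Section \ref{diamondquestion}. Every contributing configuration has \(x_{2}=x_{4}=0\), and \(\Ld_{C_{4}}\approx|S_{\lambda}^{d-2}|^{-1}\approx\lambda^{-\frac{d-4}{2}}\). By contrast, \(\|f_{3}\|_{\infty}\Ld_{P_{2}}(1_{\{0\}},S_{\lambda},1_{\{0\}})\approx|S_{\lambda}|^{-1}\approx\lambda^{-\frac{d-2}{2}}\), smaller by a factor of \(\lambda\). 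Indeed, if your reduction held, Theorem \ref{2chain} on its full region would give (\ref{Diamondeq}) at \((0,\frac{d-1}{d+1}-\epsilon,0,\frac{d-1}{d+1}-\epsilon)\). That point satisfies condition (1) of Remark \ref{2chainrmk} for the chain \(x_{2}-x_{1}-x_{4}\), yet Section \ref{diamondquestion} shows the inequality fails there.

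You do write down the right expression for the diagonal, namely \(\|f_{3}\|_{\infty}|S_{\lambda}^{d-2}|^{-1}\Ld_{P_{1}}(f_{1},f_{2}f_{4})\). But the factor \(|S_{\lambda}^{d-2}|^{-1}\approx\lambda^{-\frac{d-4}{2}}\) does not make it negligible; in the example above this term accounts for the entire form. It must be estimated separately, as the paper does:
apply Theorem \ref{Lthm} at \((\frac{1}{p_{1}},\frac{1}{p_{2}}+\frac{1}{p_{4}})\), then H\"older, \(\|f_{2}f_{4}\|_{r}\leq\|f_{2}\|_{p_{2}}\|f_{4}\|_{p_{4}}\) with \(\frac{1}{r}=\frac{1}{p_{2}}+\frac{1}{p_{4}}\).

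This requires \((\frac{1}{p_{1}},\frac{1}{p_{2}}+\frac{1}{p_{4}})\) to lie in the \(P_{1}\) region. Intersected with the \(P_{2}\) region, that is exactly the convex hull of the five points you use. This is the real reason the other three vertices of the \(P_{2}\) region (both endpoints large) must be discarded. Your write-up drops them without justification, and dropping them would be unnecessary if your reduction were true. With this step supplied, your argument becomes the paper's proof.

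Separately, neither the uniform off-diagonal bound \(\lesssim|S_{\lambda}^{d-2}|\) nor your fall-back holds for very short \(x_{2}-x_{4}\). For \(x_{4}=x_{2}+2e_{1}\), with \(e_{1}\) the first standard basis vector, the intersection is \(\{x_{2}+y:\ y_{1}=1,\ |y|^{2}=\lambda\}\). Its size is the number of representations of \(\lambda-1\) as a sum of \(d-1\) squares, which is \(\approx\lambda^{\frac{d-3}{2}}\gg|S_{\lambda}^{d-2}|\) for \(d\geq 6\). The paper asserts the same uniform bound, so this is not where you depart from it. Still, a fully rigorous version of either argument would also have to handle near-diagonal pairs.
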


\begin{proof}[Proof of Theorem \ref{Diamondthm}]
We will start by setting up the case where \(p_{4}=\infty\), then use the symmetry of the shape to get the full range in Theorem \ref{Diamondthm}.
\begin{align*}
    \Ld_{C_{4}}(f_{1},f_{2},f_{3},f_{4})&\approx\frac{1}{|S_{\lambda}|^{2}|S_{\lambda}^{d-2}|}\sum_{x_{1},x_{2},x_{3},x_{4}\in\Z^{d}}\prod_{i=1}^{4}f_{i}(x_{i})S_{\lambda}(x_{1}-x_{2})S_{\lambda}(x_{2}-x_{3})S_{\lambda}(x_{3}-x_{4})S_{\lambda}(x_{4}-x_{1})\\
    &=\frac{1}{|S_{\lambda}|^{2}|S_{\lambda}^{d-2}|}\sum_{x_{1},x_{2},x_{3}\in\Z^{d}}\prod_{i=1}^{3}f_{i}(x_{i})S_{\lambda}(x_{1}-x_{2})S_{\lambda}(x_{2}-x_{3})\\&\quad\quad\quad\cdot\sum_{x_{4}\in\Z^{d}}f_{4}(x_{4})S_{\lambda}(x_{3}-x_{4})S_{\lambda}(x_{4}-x_{1})\\
    &\leq \frac{\|f_{4}\|_{\infty}}{|S_{\lambda}|^{2}|S_{\lambda}^{d-2}|}\sum_{x_{1},x_{2},x_{3}\in\Z^{d}}\prod_{i=1}^{3}f_{i}(x_{i})S_{\lambda}(x_{1}-x_{2})S_{\lambda}(x_{2}-x_{3})\sum_{x_{4}\in\Z^{d}}S_{\lambda}(x_{3}-x_{4})S_{\lambda}(x_{4}-x_{1})
    \end{align*}
    If \(x_{1}\neq x_{3}\), then \(\displaystyle\sum_{x_{4}\in\Z^{d}}S_{\lambda}(x_{3}-x_{4})S_{\lambda}(x_{4}-x_{1}) \approx |S_{\lambda}^{d-2}|.\) However, if \(x_{1}=x_{3}\), then \\ \(\displaystyle\sum_{x_{4}\in\Z^{d}}S_{\lambda}(x_{3}-x_{4})S_{\lambda}(x_{4}-x_{1}) \approx |S_{\lambda}|\). To account for this difference in constants, we must split the sum into two parts.
    
    \begin{align*}
    \Ld_{C_{4}}(f_{1},f_{2},f_{3},f_{4})&\lesssim\frac{\|f_{4}\|_{\infty}}{|S_{\lambda}|^{2}}\sum_{\substack{x_{1},x_{2},x_{3}\in\Z^{d}\\x_{1}\neq x_{3}}}\prod_{i=1}^{3}f_{i}(x_{i})S_{\lambda}(x_{1}-x_{2})S_{\lambda}(x_{2}-x_{3})\\
    &\,\,\,\,\,\,\,\,\,\,+\frac{\|f_{4}\|_{\infty}}{|S_{\lambda}||S_{\lambda}^{d-2}|}\sum_{\substack{x_{1},x_{2},x_{3}\in\Z^{d}\\x_{1}=x_{3}}}\prod_{i=1}^{3}f_{i}(x_{i})S_{\lambda}(x_{1}-x_{2})S_{\lambda}(x_{2}-x_{3})\\
   &\approx\|f_{4}\|_{\infty} \left(\Ld_{P_{2}}(f_{1},f_{2},f_{3})+\frac{1}{|S_{\lambda}^{d-2}|}\Ld_{P_{1}}(f_{2},f_{3}\cdot f_{1})\right).
\end{align*}

If \(\frac{1}{p_{1}},\frac{1}{p_{2}},\frac{1}{p_{3}}\) are such that the  conditions in Theorem \ref{2chain} for \(\Ld_{P_{2}}\) are satisfied by \(\left(\frac{1}{p_{1}},\frac{1}{p_{2}},\frac{1}{p_{3}}\right)\), and \(\left(\frac{1}{p_{2}},\frac{1}{p_{1}}+\frac{1}{p_{3}}\right)\) is in the region where the \(\ell^{p}\) improving inequality in Theorem \ref{Lthm} holds for \(\Ld_{P_{1}}\), then

\begin{align*}
    \Ld_{C_{4}}(f_{1},f_{2},f_{3},f_{4})&\lesssim\|f_{4}\|_{\infty} \left(\Ld_{P_{2}}(f_{1},f_{2},f_{3})+\frac{1}{|S_{\lambda}^{d-2}|}\Ld_{P_{1}}(f_{2},f_{3}\cdot f_{1})\right)\\
    &\lesssim \|f_{4}\|_{\infty}\|f_{1}\|_{p_{1}}\|f_{2}\|_{p_{2}}\|f_{3}\|_{p_{3}}\lambda^{\frac{d}{2}(1-\frac{1}{p_{1}}-\frac{1}{p_{2}}-\frac{1}{p_{3}})}\left(1+\frac{1}{|S_{\lambda}^{d-2}|}\right)\\
    &\lesssim\|f_{4}\|_{\infty}\|f_{1}\|_{p_{1}}\|f_{2}\|_{p_{2}}\|f_{3}\|_{p_{3}}\lambda^{\frac{d}{2}(1-\frac{1}{p_{1}}-\frac{1}{p_{2}}-\frac{1}{p_{3}})}
\end{align*}
as
\[\Ld_{P_{2}}(f_{1},f_{2},f_{3})\lesssim \|f_{1}\|_{p_{1}}\|f_{2}\|_{p_{2}}\|f_{3}\|_{p_{3}}\lambda^{\frac{d}{2}(1-\frac{1}{p_{1}}-\frac{1}{p_{2}}-\frac{1}{p_{3}})}\]
and 
\begin{align*}
    \Ld_{P_{1}}(f_{2},f_{1}\cdot f_{3})
    &\lesssim\|f_{2}\|_{p_{2}}\|f_{1}f_{3}\|_{\left(\frac{1}{p_{1}}+\frac{1}{p_{3}}\right)^{-1}}\lambda^{\frac{d}{2}(1-\frac{1}{p_{2}}-\frac{1}{p_{3}}-\frac{1}{p_{1}})}\\
    &\leq \|f_{2}\|_{p_{2}}\|f_{1}\|_{p_{1}}\|f_{3}\|_{p_{3}}\lambda^{\frac{d}{2}(1-\frac{1}{p_{2}}-\frac{1}{p_{3}}-\frac{1}{p_{1}})}.
\end{align*}

We need to know for what \(\frac{1}{p_{1}},\frac{1}{p_{2}},\frac{1}{p_{3}}\) we get that the conditions in Theorem \ref{2chain} for \(\Ld_{P_{2}}\) are satisfied for \(\left(\frac{1}{p_{1}},\frac{1}{p_{2}},\frac{1}{p_{3}}\right)\), and the conditions in Theorem \ref{Lthm} for \(\Ld_{P_{1}}\) are satisfied with \(\left(\frac{1}{p_{2}},\frac{1}{p_{1}}+\frac{1}{p_{3}}\right)\).\\

We have that the conditions in Theorem \ref{2chain} are satisfied when \((\frac{1}{p_{1}},\frac{1}{p_{2}},\frac{1}{p_{3}})\) is in the convex hull of the points: 
\begin{multicols}{3}
    \begin{itemize}
        \item \((1,0,0)\)
        \item \((0,1,0)\)
        \item \((0,0,1)\)
        \columnbreak
        \item \((\frac{d-1}{d+1},\frac{d-1}{d+1},0)\)
        \item \((0,\frac{d-1}{d+1},\frac{d-1}{d+1})\)
        \item \((\frac{d-1}{d+1},\frac{d-3}{d+1},\frac{d-1}{d+1})\)
        \columnbreak
        \item \((\frac{d-1}{d+1},0,\frac{d^{2}-5}{d^{2}-1})\)
        \item \(\frac{d^{2}-5}{d^{2}-1}, 0, \frac{d-1}{d+1})\)
    \end{itemize}
\end{multicols}

The conditions in Theorem \ref{Lthm} will be satisfied when, additionally, we have that \((\frac{1}{p_{1}},\frac{1}{p_{2}},\frac{1}{p_{3}})\) is in the convex hull of the points 
\begin{multicols}{3}
    \begin{itemize}
        \item \((1,0,0)\)
        \item \((0,1,0)\)
        \columnbreak
        \item \((0,0,1)\)
        \item \((\frac{d-1}{d+1},\frac{d-1}{d+1},0)\)
        \columnbreak
        \item \((0,\frac{d-1}{d+1},\frac{d-1}{d+1})\)
    \end{itemize}
\end{multicols}

The conditions in Theorem \ref{Lthm} are more restrictive than the ones in Theorem \ref{2chain}. Thus, we can say that 
\[C_{4}(f_{1},f_{2},f_{3},f_{4})\lesssim\lambda^{\frac{d}{2}(1-\frac{1}{p_{1}}-\frac{1}{p_{2}}-\frac{1}{p_{3}})}\|f_{1}\|_{p_{1}}\|f_{2}\|_{p_{2}}\|f_{3}\|_{p_{3}}\|f_{4}\|_{\infty}\]
 when \((\frac{1}{p_{1}},\frac{1}{p_{2}},\frac{1}{p_{3}})\) is in the convex hull of the points:
 \begin{multicols}{3}
    \begin{itemize}
        \item \((1,0,0)\)
        \item \((0,1,0)\)
        \columnbreak
        \item \((0,0,1)\)
        \item \((\frac{d-1}{d+1},\frac{d-1}{d+1},0)\)
        \columnbreak
        \item \((0,\frac{d-1}{d+1},\frac{d-1}{d+1})\)
    \end{itemize}
\end{multicols}

Using the symmetry of the diamond, we can say additionally that \[\Ld_{C_{4}}(f_{1},f_{2},f_{3},f_{4})\lesssim\lambda^{\frac{d}{2}(1-\frac{1}{p_{1}}-\frac{1}{p_{2}}-\frac{1}{p_{4}})}\|f_{1}\|_{p_{1}}\|f_{2}\|_{p_{2}}\|f_{3}\|_{\infty}\|f_{4}\|_{p_{4}}\] when \((\frac{1}{p_{1}},\frac{1}{p_{2}},\frac{1}{p_{4}})\) is in the convex hull of the points listed above, that \[\Ld_{C_{4}}(f_{1},f_{2},f_{3},f_{4})\lesssim\lambda^{\frac{d}{2}(1-\frac{1}{p_{1}}-\frac{1}{p_{3}}-\frac{1}{p_{4}})}\|f_{1}\|_{p_{1}}\|f_{2}\|_{\infty}\|f_{3}\|_{p_{3}}\|f_{4}\|_{p_{4}}\] when \((\frac{1}{p_{1}},\frac{1}{p_{3}},\frac{1}{p_{4}})\) is in the convex hull of the points listed above, and that \[\Ld_{C_{4}}(f_{1},f_{2},f_{3},f_{4})\lesssim\lambda^{\frac{d}{2}(1-\frac{1}{p_{2}}-\frac{1}{p_{3}}-\frac{1}{p_{4}})}\|f_{1}\|_{\infty}\|f_{2}\|_{p_{2}}\|f_{3}\|_{p_{3}}\|f_{4}\|_{p_{4}}\] when \((\frac{1}{p_{2}},\frac{1}{p_{3}},\frac{1}{p_{4}})\) when is in the convex hull of the points listed above. \\ 
Interpolating between these results gives Theorem \ref{Diamondthm}. 
\end{proof}

\subsection{Exponent Sharpness}
To establish that \(\frac{d}{2}\left(1-\frac{1}{p_{1}}-\frac{1}{p_{2}}-\frac{1}{p_{3}}-\frac{1}{p_{4}}\right)\) is the best exponent that we can guarantee, take \(f_{1}(x)=f_{2}(x)=f_{3}(x)=f_{4}(x)=B_{\lambda}(x)\). Then \[\|f_{1}\|_{p_{1}}\|f_{2}\|_{p_{2}}\|f_{3}\|_{p_{3}}\|f_{4}\|_{p_{4}}\approx\lambda^{\frac{d}{2}(\frac{1}{p_{1}}+\frac{1}{p_{2}}+\frac{1}{p_{3}}+\frac{1}{p_{4}})},\]
and
\begin{align*}
    \Ld_{C_{4}}(f_{1},f_{2},f_{3},f_{4})&\approx\frac{1}{|S_{\lambda}|^{2}|S_{\lambda}^{d-2}|}\sum_{x_{1},x_{2},x_{3},x_{4}\in\Z^{d}}\left(\prod_{i=1}^{4}B_{\lambda}(x_{i})\right)S_{C_{4}}(x_{1},x_{2},x_{3},x_{4})\\
    &=\frac{1}{|S_{\lambda}|^{2}}\sum_{x_{1},x_{2},x_{3}\in\Z^{d}}\left(\prod_{i=1}^{3}B_{\lambda}(x_{i})\right)S_{\lambda}(x_{1}-x_{2})S_{\lambda}(x_{2}-x_{3})\frac{1}{|S_{\lambda}^{d-2}|}\sum_{x_{4}\in\Z^{d}}B_{\lambda}(x_{4})S_{\lambda}(x_{3}-x_{4})S_{\lambda}(x_{4}-x_{3})\\
    &\approx \frac{1}{|S_{\lambda}|^{2}}\sum_{x_{1},x_{2}\in\Z^{d}}B_{\lambda}(x_{1})B_{\lambda}(x_{2})S_{\lambda}(x_{1}-x_{2})B_{\lambda}(x_{3})S_{\lambda}(x_{2}-x_{3})\\
    &= \frac{1}{|S_{\lambda}|}\sum_{x_{1},x_{2}\in\Z^{d}}B_{\lambda}(x_{1})B_{\lambda}(x_{2})S_{\lambda}(x_{1}-x_{2})\frac{1}{|S_{\lambda}|}\sum_{x_{3}\in\Z^{d}}B_{\lambda}(x_{3})S_{\lambda}(x_{2}-x_{3})\\
    &\approx\frac{1}{|S_{\lambda}|}\sum_{x_{1},x_{2}\in\Z^{d}}B_{\lambda}(x_{1})B_{\lambda}(x_{2})S_{\lambda}(x_{1}-x_{2})\\
    &=\sum_{x_{1}\in\Z^{d}}B_{\lambda}(x_{1})\frac{1}{|S_{\lambda}|}\sum_{x_{2}\in\Z^{d}}B_{\lambda}(x_{2})S_{\lambda}(x_{1}-x_{2})\\
    &\approx \sum_{x_{1}\in\Z^{d}}B_{\lambda}(x_{1})\\
    &\approx \lambda^{\frac{d}{2}}\approx \lambda^{\frac{d}{2}(1-\frac{1}{p_{1}}-\frac{1}{p_{2}}-\frac{1}{p_{3}}-\frac{1}{p_{4}})}\|f_{1}\|_{p_{1}}\|f_{2}\|_{p_{2}}\|f_{3}\|_{p_{3}}\|f_{4}\|_{p_{4}}
\end{align*}

Thus, \(\frac{d}{2}\left(1-\frac{1}{p_{1}}-\frac{1}{p_{2}}-\frac{1}{p_{3}}-\frac{1}{p_{4}}\right)\) is the best exponent that we can expect for \(\Lambda_{C_{4}}\).

\subsection{Addressing Question \ref{mainquestion}}\label{diamondquestion}

As in Section \ref{diamond+question}, take \(f_{1}=f_{3}=S_{\lambda}\) and \(f_{2}=f_{4}=1_{\{0\}}\). Then \[\|f_{1}\|_{p_{1}}\|f_{2}\|_{p_{2}}\|f_{3}\|_{p_{3}}\|f_{4}\|_{p_{4}}\approx\lambda^{\frac{d-2}{2}(\frac{1}{p_{1}}+\frac{1}{p_{3}})},\]

and 
\begin{align*}
    \Ld_{C_{4}}(f_{1},f_{2},f_{3},f_{4})&\approx\frac{1}{|S_{\lambda}|^{2}|S_{\lambda}^{d-2}|}\sum_{x_{1},...,x_{4}\in\Z^{d}}1_{\{0\}}(x_{2})1_{\{0\}}(x_{4}) S_{\lambda}(x_{1})S_{\lambda}(x_{3})S_{C_{4}}(x_{1},x_{2},x_{3},x_{4})\\
    &=\frac{1}{|S_{\lambda}|^{2}|S_{\lambda}^{d-2}|}\sum_{x_{1},x_{3}\in\Z^{d}} S_{\lambda}(x_{1})S_{\lambda}(x_{3})\\
    &\approx\frac{|S_{\lambda}|^{2}}{|S_{\lambda}|^{2}|S_{\lambda}^{d-2}|}
     \approx \lambda^{-\frac{d-4}{2}}.
\end{align*}

Thus, in order to get \(\ell^{p}\) improving results with the best exponent on \(\lambda\), it must be that \[\lambda^{-\frac{d-4}{2}}\lesssim \lambda^{\frac{d}{2}(1-\frac{1}{p_{1}}-\frac{1}{p_{2}}-\frac{1}{p_{3}}-\frac{1}{p_{4}})}) \lambda^{\frac{d-2}{2}(\frac{1}{p_{2}}+\frac{1}{p_{4}})}.\]

Take the point where \(\frac{1}{p_{1}}=\frac{1}{p_{3}}=0\) and \(\frac{1}{p_{2}}=\frac{1}{p_{4}}=\frac{d-1}{d+1}\). Note that this is one of the points that makes up the convex hull of the range where Theorem \ref{tetrtheorem} holds for \(\Ld_{K_{4}}\). However, at this point, 
\[\frac{d}{2}(1-\frac{1}{p_{1}}-\frac{1}{p_{2}}-\frac{1}{p_{3}}-\frac{1}{p_{4}})+\frac{d-2}{2}(\frac{1}{p_{1}}+\frac{1}{p_{3}})=\frac{d}{2}(1-\frac{2(d-1)}{d+1})=2(\frac{d}{d+1})-\frac{d}{2}<-\frac{d-4}{2}.\]

Thus, there is some small \(\epsilon>0\) such that at the point \(\left(\frac{1}{p_{1}},\frac{1}{p_{2}},\frac{1}{p_{3}},\frac{1}{p_{4}}\right)=\left(0,\frac{d-1}{d+1}-\epsilon,0, \frac{d-1}{d+1}-\epsilon,\right)\), we have that the \(\ell^{p}\) improving inequality holds with the best exponent for \(\Ld_{K_{4}}\), but we have shown with this example that it does not hold for \(\Ld_{C_{4}}\). This shows that the answer to Question \ref{mainquestion} is negative.\\

We can also use the example from \cite{BIKP25} to show that the answer to question \ref{mainquestion} is negative. That is, where \(G=C_{4+t}\) and \(G'=C_{4}\), and we let \(f_{2}=f_{4}=S_{\lambda}\) and \(f_{1}=f_{3}=1_{\{0\}}\). Then \[\|f_{1}\|_{p_{1}}\|f_{2}\|_{p_{2}}\|f_{3}\|_{p_{3}}\|f_{4}\|_{p_{4}}\approx\lambda^{\frac{d-2}{2}(\frac{1}{p_{2}}+\frac{1}{p_{4}})},\]

and 
\begin{align*}
    \Ld_{C_{4}}(f_{1},f_{2},f_{3},f_{4})&\approx\frac{1}{|S_{\lambda}|^{2}|S_{\lambda}^{d-2}|}\sum_{x_{1},...,x_{4}\in\Z^{d}}1_{\{0\}}(x_{1})1_{\{0\}}(x_{3}) S_{\lambda}(x_{2})S_{\lambda}(x_{4})S_{C_{4}}(x_{1},x_{2},x_{3},x_{4})\\
    &=\frac{1}{|S_{\lambda}|^{2}|S_{\lambda}^{d-2}|}\sum_{x_{2},x_{4}\in\Z^{d}} S_{\lambda}(x_{2})S_{\lambda}(x_{4})\\
    &\approx\frac{|S_{\lambda}|^{2}}{|S_{\lambda}|^{2}|S_{\lambda}^{d-2}|}
     \approx \lambda^{-\frac{d-4}{2}}
\end{align*}

Thus, in order for the inequality in Theorem \ref{Diamond+thm} to hold, it must be that \[\lambda^{-\frac{d-4}{2}}\lesssim \lambda^{\frac{d}{2}(1-\frac{1}{p_{1}}-\frac{1}{p_{2}}-\frac{1}{p_{3}}-\frac{1}{p_{4}})}) \lambda^{\frac{d-2}{2}(\frac{1}{p_{2}}+\frac{1}{p_{4}})}.\]

Take the point where \(\frac{1}{p_{2}}=\frac{1}{p_{4}}=0\) and \(\frac{1}{p_{1}}=\frac{1}{p_{3}}=\frac{d-1}{d+1}\). Note that this is one of the points that makes up the convex hull of the range where where Theorem \ref{Diamond+thm} holds for \(\Ld_{C_{4+t}}\). However, at this point, 
\[\frac{d}{2}(1-\frac{1}{p_{1}}-\frac{1}{p_{2}}-\frac{1}{p_{3}}-\frac{1}{p_{4}})+\frac{d-2}{2}(\frac{1}{p_{2}}+\frac{1}{p_{4}})=\frac{d}{2}(1-\frac{2(d-1)}{d+1})=2(\frac{d}{d+1})-\frac{d}{2}<-\frac{d-4}{2}.\]

Thus, there is some small \(\epsilon>0\) such that at the point \(\left(\frac{1}{p_{1}},\frac{1}{p_{2}},\frac{1}{p_{3}},\frac{1}{p_{4}}\right)=\left(\frac{d-1}{d+1}-\epsilon,0,\frac{d-1}{d+1}\epsilon,0\right)\), we have that the \(\ell^{p}\) improving inequality holds with the best exponent for \(\Ld_{C_{4+t}}\), but does not hold for \(\Ld_{C_{4}}\). This is another example to show that the answer to Question \ref{mainquestion} is negative.


\section{The Triangle with a Tail, \(K_{3+t}\)} \label{Tt section}

For \(d\geq 7\), define \(\Ld_{K_{3+t}}\) to be the form \[\Ld_{K_{3+t}}(f_{1},f_{2},f_{3},f_{4})=\frac{1}{N_{K_{3}}}\sum_{x_{1},\dots,x_{4}\in\Z^{d}}\left(\prod_{i=1}^{4}f_{i}(x_{i})\right)S_{\lambda}(x_{1}-x_{2})S_{\lambda}(x_{2}-x_{3})S_{\lambda}(x_{3}-x_{1})S_{\lambda}(x_{3}-x_{4}).\]

\begin{theorem}\label{Tttheorem}
    If \(d\geq 7\) and \(1<p_{1},p_{2},p_{3},p_{4}\) with \(\frac{1}{p_{1}}+\frac{1}{p_{2}}+\frac{1}{p_{3}}+\frac{1}{p_{4}}>1\), then there exist constants \(C_{p_{1},p_{2},p_{3},p_{4}}\) such that for all \(\lambda\in\mathcal{R}_{K_{3+t}}\) we have the \(\ell^{p}\) improving inequality
    \begin{equation}\label{Tteq}\Ld_{K_{3+t}}(f_{1},f_{2},f_{3},f_{4})\leq C_{p_{1},p_{2},p_{3},p_{4}}\lambda^{\frac{d}{2}(1-\frac{1}{p_{1}}-\frac{1}{p_{2}}-\frac{1}{p_{3}}-\frac{1}{p_{4}})}\|f_{1}\|_{p_{1}}\|f_{2}\|_{p_{2}}\|f_{3}\|_{p_{3}}\|f_{4}\|_{p_{4}}\end{equation}
    provided \(\left(\frac{1}{p_{1}},\frac{1}{p_{2}},\frac{1}{p_{3}},\frac{1}{p_{4}}\right)\) is in the convex hull of the points:
\begin{multicols}{4}
    \begin{itemize}
            \item \((1,0,0,0)\)
            \item \((0,1,0,0)\)
            \item \((0,0,1,0)\)
            \item \((0,0,0,1)\)
            \columnbreak
            \item \((0,\frac{d-1}{d+1},0,\frac{d-1}{d+1})\)
            \item \((\frac{d-1}{d+1},\frac{d-1}{d+1},0,0)\)
            \item \((0,0,\frac{d-1}{d+1},\frac{d-1}{d+1})\)
            \item \((0,\frac{d-1}{d+1},\frac{d-1}{d+1},0)\)
            \columnbreak
            \item \((0,\frac{d-1}{d+1},\frac{d-3}{d+1},\frac{d-1}{d+1})\)
            \item \((\frac{d-1}{d+1},0,\frac{d-3}{d+1},\frac{d-1}{d+1})\)
            \item \((\frac{d-3}{d+1},\frac{d-1}{d+1},0,\frac{d-1}{d+1})\)
            \item \((\frac{d-1}{d+1},\frac{d-3}{d+1},0,\frac{d-1}{d+1})\)
            \columnbreak
            \item \((0,\frac{d-1}{d+1},0,\frac{d^{2}-5}{d^{2}-1})\)
            \item \((0,\frac{d^{2}-5}{d^{2}-1},0,\frac{d-1}{d+1})\)
            \item \((\frac{d-1}{d+1},0,0,\frac{d^{2}-5}{d^{2}-1})\)
            \item \((\frac{d^{2}-5}{d^{2}-1},0,0,\frac{d-1}{d+1})\)
        \end{itemize}
\end{multicols}
\end{theorem}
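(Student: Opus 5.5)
We reduce the triangle with a tail to forms already handled, one for each vertex whose function we bound in \(\ell^{\infty}\), and then interpolate. The form is normalized by \(N_{K_{3+t}}\approx|S_{\lambda}|^{2}|S_{\lambda}^{d-2}|\).

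\textbf{Step 1: \(p_{4}=\infty\).} Summing out the tail variable gives \(\sum_{x_{4}}f_{4}(x_{4})S_{\lambda}(x_{3}-x_{4})\leq\|f_{4}\|_{\infty}|S_{\lambda}|\). This yields
\[\Ld_{K_{3+t}}(f_{1},f_{2},f_{3},f_{4})\lesssim\|f_{4}\|_{\infty}\Ld_{K_{3}}(f_{1},f_{2},f_{3}),\]
so Theorem \ref{trithm} applies on its hull. This produces the vertices \((1,0,0,0),(0,1,0,0),(0,0,1,0)\) and the three points with two entries \(\frac{d-1}{d+1}\) among the first three coordinates.

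\textbf{Step 2: \(p_{3}=\infty\).} Bounding \(f_{3}\) by its sup norm leaves
\[\frac{\|f_{3}\|_{\infty}}{|S_{\lambda}|^{2}|S_{\lambda}^{d-2}|}\sum f_{1}(x_{1})f_{2}(x_{2})f_{4}(x_{4})S_{\lambda}(x_{1}-x_{2})\sum_{x_{3}}S_{\lambda}(x_{1}-x_{3})S_{\lambda}(x_{2}-x_{3})S_{\lambda}(x_{3}-x_{4}).\]
The inner sum over \(x_{3}\) is no bigger than \(\sum_{x_{3}}S_{\lambda}(x_{1}-x_{3})S_{\lambda}(x_{2}-x_{3})\lesssim|S_{\lambda}^{d-2}|\), since \(x_{1}\neq x_{2}\) here. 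That bound, however, drops the tail and leaves \(f_{4}\) summed freely, which is useless. So instead keep the edge \(x_{3}x_{4}\) and discard the edge \(x_{1}x_{2}\). The configuration is then a path \(x_{4}-x_{3}-x_{1}\) together with the extra vertex \(x_{2}\).

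The cleaner route is to fix \(f_{1}\) (or \(f_{2}\)) in \(\ell^{\infty}\) instead. Summing over \(x_{1}\) with \(x_{2},x_{3}\) at distance \(\sqrt{\lambda}\) gives at most \(\|f_{1}\|_{\infty}|S_{\lambda}^{d-2}|\). The result is
\[\Ld_{K_{3+t}}\lesssim\|f_{1}\|_{\infty}\Ld_{P_{2}}(f_{2},f_{3},f_{4}),\]
where the path is \(x_{2}-x_{3}-x_{4}\) with middle vertex \(x_{3}\); the normalization \(|S_{\lambda}|^{2}\) matches. Symmetrically, \(\Ld_{K_{3+t}}\lesssim\|f_{2}\|_{\infty}\Ld_{P_{2}}(f_{1},f_{3},f_{4})\). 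Theorem \ref{2chain}, with \(f_{3}\) in the middle slot, then produces exactly the listed vertices. Examples are \((0,\frac{d-1}{d+1},\frac{d-3}{d+1},\frac{d-1}{d+1})\), \((0,\frac{d-1}{d+1},0,\frac{d^{2}-5}{d^{2}-1})\), and \((0,\frac{d^{2}-5}{d^{2}-1},0,\frac{d-1}{d+1})\), together with their counterparts obtained by swapping coordinates \(1\) and \(2\).

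\textbf{Step 3: collect the remaining vertices.} The points \((0,0,0,1)\) and \((0,0,\frac{d-1}{d+1},\frac{d-1}{d+1})\) come from either chain bound. The hull points with first or second coordinate \(\frac{d-3}{d+1}\) and third coordinate \(0\), such as \((\frac{d-3}{d+1},\frac{d-1}{d+1},0,\frac{d-1}{d+1})\), also need accounting for. They arise from the \(P_{2}\) vertex \((\frac{d-1}{d+1},\frac{d-3}{d+1},\frac{d-1}{d+1})\) after relabeling. Checking this matching of coordinates is the main bookkeeping obstacle. Each estimate has the correct exponent \(\frac{d}{2}(1-\sum\frac{1}{p_{i}})\), because \(\frac{1}{\infty}=0\).

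\textbf{Step 4: interpolate.} Multilinear interpolation (complex or real, applied to the vertices) among the three reductions yields the full convex hull, as in the proofs of Theorems \ref{tetrtheorem} and \ref{Diamondthm}. Exponent sharpness follows by taking every \(f_{i}=B_{\lambda}\), exactly as before.
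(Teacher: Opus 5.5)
Each of your three reductions is valid on its own. The problem is that interpolating them does not give the stated hull, and Step 3 is where this breaks.

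\textbf{Step 3 is false.} The vertices $(\frac{d-3}{d+1},\frac{d-1}{d+1},0,\frac{d-1}{d+1})$ and $(\frac{d-1}{d+1},\frac{d-3}{d+1},0,\frac{d-1}{d+1})$ cannot come from relabeling the $P_2$ vertex $(\frac{d-1}{d+1},\frac{d-3}{d+1},\frac{d-1}{d+1})$. That would need a $2$-chain with middle vertex $x_1$ (resp.\ $x_2$) and endpoint $x_4$. But $x_4$ is adjacent only to $x_3$, so both of your chains have middle vertex $x_3$. Relabeling therefore only returns the already-listed points $(0,\frac{d-1}{d+1},\frac{d-3}{d+1},\frac{d-1}{d+1})$ and $(\frac{d-1}{d+1},0,\frac{d-3}{d+1},\frac{d-1}{d+1})$.

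\textbf{These vertices lie outside the convex hull of your three regions.} Write $a=\frac{d-1}{d+1}$ and $b=\frac{d^{2}-5}{d^{2}-1}$. All coordinates are nonnegative, so a convex combination with third coordinate $0$ can only use points on the face $\frac{1}{p_3}=0$. On that face your regions are:
\begin{itemize}
\item $\mathrm{conv}\{(1,0,0,0),(0,1,0,0),(a,a,0,0)\}$;
\item $\mathrm{conv}\{(0,1,0,0),(0,0,0,1),(0,a,0,b),(0,b,0,a)\}$;
\item $\mathrm{conv}\{(1,0,0,0),(0,0,0,1),(a,0,0,b),(b,0,0,a)\}$.
\end{itemize}
Every point of these has coordinate sum at most $a+b=\frac{2(d-2)}{d-1}$. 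However, $(\frac{d-3}{d+1},a,0,a)$ has coordinate sum
\[
\frac{3d-5}{d+1}=\frac{2(d-2)}{d-1}+\frac{(d-3)^{2}}{d^{2}-1}.
\]
So a whole neighbourhood of these vertices inside the claimed region is never reached, not just boundary points. Your abandoned $p_3=\infty$ attempt in Step 2 is exactly the face where this happens.

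\textbf{The missing idea} is to use the $\ell^p$ improving of $A_\lambda$ on the tail, rather than putting one of $f_1,f_2,f_4$ in $\ell^\infty$. The paper sums out $x_4$ to get
\[
\Lambda_{K_{3+t}}(f_1,f_2,f_3,f_4)\approx\Lambda_{K_3}(f_1,f_2,f_3\cdot A_\lambda f_4).
\]
It then applies Theorem \ref{trithm} at $(\frac{1}{p_1},\frac{1}{p_2},\frac{1}{p_3}+\frac{1}{q_4})$. Finally it uses H\"older together with $\|A_\lambda f_4\|_{q_4}\lesssim\lambda^{\frac{d}{2}(\frac{1}{q_4}-\frac{1}{p_4})}\|f_4\|_{p_4}$, so the $\frac{1}{q_4}$ terms cancel in the exponent.

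With $p_3=\infty$, this lets $f_1$, $f_2$ and $f_4$ carry large $\frac{1}{p_i}$ simultaneously. The vertex $(\frac{d-3}{d+1},a,0,a)$ arises (as a limit, like every vertex here) from $\frac{1}{p_4}=a$ and $\frac{1}{q_4}=\frac{2}{d-1}a=\frac{2}{d+1}$. The corresponding triangle point is $(\frac{d-3}{d+1},a,\frac{2}{d+1})$, which lies on the edge of the $K_3$ hull joining $(a,a,0)$ and $(0,a,a)$.

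On the faces $\frac{1}{p_4}=0$, $\frac{1}{p_1}=0$ and $\frac{1}{p_2}=0$, this argument essentially reproduces your three bounds. Your approach therefore captures only part of what the tail identity gives.
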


\begin{remark}\label{Ttrmk}
    The region in Theorem \ref{Tttheorem} is the set of all points \(\left(\frac{1}{p_{1}},\frac{1}{p_{2}},\frac{1}{p_{3}},\frac{1}{p_{4}}\right)\) such that the following hold:
    \begin{itemize}
    \item \(\frac{1}{p_{1}}+\frac{1}{p_{2}}+\frac{1}{p_{3}}+\frac{1}{p_{4}}>1\)
    \item \(\left(\frac{1}{p_{1}}+\frac{1}{p_{2}}+\frac{1}{p_{4}}\right)\frac{2}{d-1}+\frac{1}{p_{3}}<1\) when \(\frac{1}{p_{1}}+\frac{1}{p_{2}}<\frac{d-1}{d+1}\) and \(\frac{1}{p_{4}}<\frac{d-1}{d+1}\)
    \item \(\left(\frac{1}{p_{1}}+\frac{1}{p_{3}}\right)\frac{2}{d-1}+\frac{1}{p_{2}}+\frac{d-1}{2}\left(\frac{1}{p_{4}}\right)<1\) when \(\frac{1}{p_{2}}>\frac{d-1}{d+1}\) and \(\frac{1}{p_{4}}<\frac{d-1}{d+1}\)
    \item \(\left(\frac{1}{p_{3}}+\frac{1}{p_{2}}\right)\frac{2}{d-1}+\frac{1}{p_{1}}+\frac{d-1}{2}\left(\frac{1}{p_{4}}\right)<1\) when \(\frac{1}{p_{1}}>\frac{d-1}{d+1}\) and \(\frac{1}{p_{4}}<\frac{d-1}{d+1}\)
    \item \(\frac{1}{p_{1}}+\frac{1}{p_{2}}+\frac{1}{p_{3}}+\frac{2}{d-1}\left(\frac{1}{p_{4}}\right)<\frac{2(d-1)}{d+1}\) when \(\frac{1}{p_{1}},\frac{1}{p_{2}}<\frac{d-1}{d+1},\) but \(\frac{1}{p_{1}}+\frac{1}{p_{2}}>\frac{d-1}{d+1}\), and \(\frac{1}{p_{4}}<\frac{d-1}{d+1}\).
    \item \(\frac{1}{p_{1}}+\frac{1}{p_{2}}+\frac{1}{p_{3}}+\left(\frac{1}{p_{4}}\right)\frac{2}{d-1}<\frac{d-1}{2}\) when \(\frac{1}{p_{1}}+\frac{1}{p_{2}}<\frac{d-1}{d+1}\), and \(\frac{1}{p_{4}}\geq\frac{d-1}{d+1}\)
    \item \(\left(\frac{1}{p_{1}}+\frac{1}{p_{3}}\right)\frac{2}{d-1}+\frac{1}{p_{2}}+\frac{1}{p_{4}}<2-\frac{2}{d-1}\) when \(\frac{1}{p_{2}}>\frac{d-1}{d+1}\) and \(\frac{1}{p_{4}}\geq\frac{d-1}{d+1}\)
    \item \(\left(\frac{1}{p_{2}}+\frac{1}{p_{3}}\right)\frac{2}{d-1}+\frac{1}{p_{1}}+\frac{1}{p_{4}}<2-\frac{2}{d-1}\) when \(\frac{1}{p_{1}}>\frac{d-1}{d+1}\) and \(\frac{1}{p_{4}}\geq\frac{d-1}{d+1}\)
    \item \(\frac{1}{p_{1}}+\frac{1}{p_{2}}+\frac{1}{p_{3}}+\frac{d-1}{2}\left(\frac{1}{p_{4}}\right)<\frac{2(d-1)}{d+1}+\frac{d-3}{2}\) when \(\frac{1}{p_{1}},\frac{1}{p_{2}}<\frac{d-1}{d+1},\) but \(\frac{1}{p_{1}}+\frac{1}{p_{2}}>\frac{d-1}{d+1}\),  and \(\frac{1}{p_{4}}\geq\frac{d-1}{d+1}\).
\end{itemize}
Note that if we consider the cases where \(\frac{1}{p_{1}}=0\) or \(\frac{1}{p_{2}}=0\), then these conditions become the equations in Remark \ref{2chainrmk} that bound the region of \(\ell^{p}\) improvement for \(\Lambda_{P_{2}}\). This makes sense as ignoring the points \(x_{1}\) or \(x_{2}\) gives a 2-chain. Similarly, taking \(\frac{1}{p_{4}}=0\) gives the conditions in Remark \ref{Trirmk} for the region of \(\ell^{p}\) improvement for \(\Lambda_{K_{3}}\). Again, this makes sense as ignoring the point \(x_{4}\) leaves a triangle shape.
\end{remark}

\begin{proof}[Proof of Theorem \ref{Tttheorem}]
We can rewrite \(\Lambda_{K_{3+t}}\) as
\begin{align*}
    \Ld_{K_{3+t}}(f_{1},f_{2},f_{3},f_{4})&\approx\frac{1}{|S_{\lambda}|^{2}|S_{\lambda}^{d-2}|}\sum_{x_{1},x_{2},x_{3},x_{4}\in\Z^{d}}\left(\prod_{i=1}^{4}f_{i}(x_{i})\right)S_{\lambda}(x_{1}-x_{2})S_{\lambda}(x_{2}-x_{3})S_{\lambda}(x_{3}-x_{1})S_{\lambda}(x_{3}-x_{4})\\
    &=\frac{1}{|S_{\lambda}|^{2}|S_{\lambda}^{d-2}|}\sum_{x_{1},x_{2},x_{3}\in\Z^{d}}\left(\prod_{i=1}^{3}f_{i}(x_{i})\right)S_{\lambda}(x_{1}-x_{2})S_{\lambda}(x_{2}-x_{3})S_{\lambda}(x_{3}-x_{1})\\
    &\quad\quad\quad\cdot\sum_{x_{4}\in\Z^{d}}f_{4}(x_{4})S_{\lambda}(x_{3}-x_{4})\\
    &=\frac{1}{|S_{\lambda}||S_{\lambda}^{d-2}|}\sum_{x_{1},x_{2},x_{3}\in\Z_{d}}\left(\prod_{i=1}^{3}f_{i}(x_{i})\right)A_{\lambda}f_{4}(x_{3})S_{\lambda}(x_{1}-x_{2})S_{\lambda}(x_{2}-x_{3})S_{\lambda}(x_{3}-x_{1})\\
    &\approx\Ld_{K_{3}}(f_{1},f_{2},f_{3}\cdot A_{\lambda}f_{4})
\end{align*}
Assume that \(\frac{1}{p_{1}},\frac{1}{p_{2}},\frac{1}{p_{3}},\frac{1}{p_{4}}\) are such that there exists a \(\frac{1}{q_{4}}\) where \(\left(\frac{1}{p_{4}},\frac{1}{q_{4}}\right)\) is in the region of \(\ell^{p}\) improvement for \(A_{\lambda}\), and \(\left(\frac{1}{p_{1}},\frac{1}{p_{2}},\frac{1}{p_{3}}+\frac{1}{q_{4}}\right)\) satisfies the conditions in Theorem \ref{trithm} to get \(\ell^{p}\) improvement for \(\Ld_{K_{3}}\). If such a \(\frac{1}{q_{4}}\) exists, then we can say
\begin{align*}
    \Ld_{K_{3+t}}(f_{1},f_{2},f_{3},f_{4})&\approx\Ld_{K_{3}}(f_{1},f_{2},f_{3}\cdot A_{\lambda}f_{4})\\
    &\lesssim \lambda^{\frac{d}{2}(1-\frac{1}{p_{1}}-\frac{1}{p_{2}}-\frac{1}{p_{3}}-\frac{1}{q_{4}})}\|f_{1}\|_{p_{1}}\|f_{2}\|_{p_{2}}\|f_{3}\|_{p_{3}}\|A_{\lambda}f_{4}\|_{q_{4}}\\
    &\lesssim  \lambda^{\frac{d}{2}(1-\frac{1}{p_{1}}-\frac{1}{p_{2}}-\frac{1}{p_{3}}-\frac{1}{p_{4}})}\|f_{1}\|_{p_{1}}\|f_{2}\|_{p_{2}}\|f_{3}\|_{p_{3}}\|f_{4}\|_{p_{4}}.
\end{align*}

If \(\frac{1}{p_{4}}<\frac{d-1}{d+1}\), we will need \(\frac{1}{q_{4}}\) to be such that \(\frac{1}{p_{4}}>\frac{1}{q_{4}}>\frac{2}{d-1}(\frac{1}{p_{4}})\) so that \((\frac{1}{p_{4}},\frac{1}{q_{4}}\) is in the region of \(\ell^{p}\) improving for \(A_{\lambda}\), and we will need that the following equations hold so that \(\left(\frac{1}{p_{1}},\frac{1}{p_{2}},\frac{1}{p_{3}}+\frac{1}{q_{4}}\right)\) satisfies the conditions in Theorem \ref{trithm} to get \(\ell^{p}\) improvement for \(\Ld_{K_{3}}\).
\begin{itemize}
    \item \(\frac{1}{p_{1}}+\frac{1}{p_{2}}+\frac{1}{p_{3}}+\frac{1}{q_{4}}>1\)
    \item \(\left(\frac{1}{p_{1}}+\frac{1}{p_{2}}\right)\frac{2}{d-1}+\frac{1}{p_{3}}+\frac{1}{q_{4}}<1\) when \(\frac{1}{p_{1}}+\frac{1}{p_{2}}<\frac{d-1}{d+1}\)
    \item \(\left(\frac{1}{p_{1}}+\frac{1}{p_{3}}+\frac{1}{q_{4}}\right)\frac{2}{d-1}+\frac{1}{p_{2}}<1\) when \(\frac{1}{p_{2}}>\frac{d-1}{d+1}\)
    \item \(\left(\frac{1}{p_{3}}+\frac{1}{q_{4}}+\frac{1}{p_{2}}\right)\frac{2}{d-1}+\frac{1}{p_{1}}<1\) when \(\frac{1}{p_{1}}>\frac{d-1}{d+1}\)
    \item \(\frac{1}{p_{1}}+\frac{1}{p_{2}}+\frac{1}{p_{3}}+\frac{1}{q_{4}}<\frac{2(d-1)}{d+1}\) when \(\frac{1}{p_{1}},\frac{1}{p_{2}}<\frac{d-1}{d+1},\) but \(\frac{1}{p_{1}}+\frac{1}{p_{2}}>\frac{d-1}{d+1}\).
\end{itemize}
Putting these conditions together, in the case where \(\frac{1}{p_{4}}<\frac{d-1}{d+1}\), we get \(\ell^{p}\) improvement with an exponent of \(\lambda^{\frac{d}{2}(1-\frac{1}{p_{1}}-\frac{1}{p_{2}}-\frac{1}{p_{3}}-\frac{1}{p_{4}})}\) as long as the following conditions hold:

\begin{itemize}
    \item \(\frac{1}{p_{1}}+\frac{1}{p_{2}}+\frac{1}{p_{3}}+\frac{1}{p_{4}}>1\)
    \item \(\left(\frac{1}{p_{1}}+\frac{1}{p_{2}}+\frac{1}{p_{4}}\right)\frac{2}{d-1}+\frac{1}{p_{3}}<1\) when \(\frac{1}{p_{1}}+\frac{1}{p_{2}}<\frac{d-1}{d+1}\)
    \item \(\left(\frac{1}{p_{1}}+\frac{1}{p_{3}}\right)\frac{2}{d-1}+\frac{1}{p_{2}}+\frac{d-1}{2}\left(\frac{1}{p_{4}}\right)<1\) when \(\frac{1}{p_{2}}>\frac{d-1}{d+1}\)
    \item \(\left(\frac{1}{p_{3}}+\frac{1}{p_{2}}\right)\frac{2}{d-1}+\frac{1}{p_{1}}+\frac{d-1}{2}\left(\frac{1}{p_{4}}\right)<1\) when \(\frac{1}{p_{1}}>\frac{d-1}{d+1}\)
    \item \(\frac{1}{p_{1}}+\frac{1}{p_{2}}+\frac{1}{p_{3}}+\frac{2}{d-1}\left(\frac{1}{p_{4}}\right)<\frac{2(d-1)}{d+1}\) when \(\frac{1}{p_{1}},\frac{1}{p_{2}}<\frac{d-1}{d+1},\) but \(\frac{1}{p_{1}}+\frac{1}{p_{2}}>\frac{d-1}{d+1}\).
\end{itemize}

When, \(\frac{1}{p_{4}}\geq\frac{d-1}{d+1}\), we need that \(\frac{1}{p_{4}}>\frac{1}{q_{4}}>\frac{d-1}{2}(\frac{1}{p_{4}}-1)+1\). Again, putting this together with the conditions for \(\left(\frac{1}{p_{1}},\frac{1}{p_{2}},\frac{1}{p_{3}}+\frac{1}{q_{4}}\right)\) satisfying the conditions in Theorem \ref{trithm} to get \(\ell^{p}\) improvement for \(\Ld_{K_{3}}\), we get the following additional conditions for \(\ell^{p}\) improvement for \(\Lambda_{K_{3+t}}\):
\begin{itemize}
    \item \(\frac{1}{p_{1}}+\frac{1}{p_{2}}+\frac{1}{p_{3}}+\frac{1}{p_{4}}>1\)
    \item \(\frac{1}{p_{1}}+\frac{1}{p_{2}}+\frac{1}{p_{3}}+\left(\frac{1}{p_{4}}\right)\frac{2}{d-1}<\frac{d-1}{2}\) when \(\frac{1}{p_{1}}+\frac{1}{p_{2}}<\frac{d-1}{d+1}\)
    \item \(\left(\frac{1}{p_{1}}+\frac{1}{p_{3}}\right)\frac{2}{d-1}+\frac{1}{p_{2}}+\frac{1}{p_{4}}<2-\frac{2}{d-1}\) when \(\frac{1}{p_{2}}>\frac{d-1}{d+1}\)
    \item \(\left(\frac{1}{p_{2}}+\frac{1}{p_{3}}\right)\frac{2}{d-1}+\frac{1}{p_{1}}+\frac{1}{p_{4}}<2-\frac{2}{d-1}\) when \(\frac{1}{p_{1}}>\frac{d-1}{d+1}\)
    \item \(\frac{1}{p_{1}}+\frac{1}{p_{2}}+\frac{1}{p_{3}}+\frac{d-1}{2}\left(\frac{1}{p_{4}}\right)<\frac{2(d-1)}{d+1}+\frac{d-3}{2}\) when \(\frac{1}{p_{1}},\frac{1}{p_{2}}<\frac{d-1}{d+1},\) but \(\frac{1}{p_{1}}+\frac{1}{p_{2}}>\frac{d-1}{d+1}\).
\end{itemize}

Together, the cases where \(\frac{1}{p_{4}}<\frac{d-1}{d+1}\) and \(\frac{1}{p_{4}}\geq\frac{d-1}{d+1}\), we get the conditions in Remark \ref{Ttrmk} which bound the region formed by the convex hull of the points listed in Theorem \ref{Tttheorem}.
\end{proof}

\subsection{Exponent sharpness}
We now establish that \(\frac{d}{2}\left(1-\frac{1}{p_{1}}-\frac{1}{p_{2}}-\frac{1}{p_{3}}-\frac{1}{p_{4}}\right)\) is the smallest exponent that we can guarantee. Take \(f_{1}(x)=f_{2}(x)=f_{3}(x)=f_{4}(x)=B_{\lambda}(x)\). Then \[\|f_{1}\|_{p_{1}}\|f_{2}\|_{p_{2}}\|f_{3}\|_{p_{3}}\|f_{4}\|_{p_{4}}\approx\lambda^{\frac{d}{2}(\frac{1}{p_{1}}+\frac{1}{p_{2}}+\frac{1}{p_{3}}+\frac{1}{p_{4}})},\]
and
\begin{align*}
    \Ld_{K_{3+t}}&(f_{1},f_{2},f_{3},f_{4})\\
    &\approx\frac{1}{|S_{\lambda}|^{2}|S_{\lambda}^{d-2}|}\sum_{x_{1},x_{2},x_{3},x_{4}\in\Z^{d}}\left(\prod_{i=1}^{4}B_{\lambda}(x_{i})\right)S_{\lambda}(x_{1}-x_{2})S_{\lambda}(x_{2}-x_{3})S_{\lambda}(x_{3}-x_{1})S_{\lambda}(x_{3}-x_{4})\\
    &=\frac{1}{|S_{\lambda}||S_{\lambda}^{d-2}|}\sum_{x_{1},x_{2},x_{3}\in\Z^{d}}\left(\prod_{i=1}^{3}B_{\lambda}(x_{i})\right)S_{\lambda}(x_{1}-x_{2})S_{\lambda}(x_{2}-x_{3})S_{\lambda}(x_{3}-x_{1})A_{\lambda}B_{\lambda}(x_{3})\\
    &\approx \frac{1}{|S_{\lambda}||S_{\lambda}^{d-2}|}\sum_{x_{1},x_{2},x_{3}\in\Z^{d}}\left(\prod_{i=1}^{3}B_{\lambda}(x_{i})\right)S_{\lambda}(x_{1}-x_{2})S_{\lambda}(x_{2}-x_{3})S_{\lambda}(x_{3}-x_{1})\\
    &=\frac{1}{|S_{\lambda}||S_{\lambda}^{d-2}|}\sum_{x_{1},x_{3}\in\Z^{d}}B_{\lambda}(x_{1})B_{\lambda}(x_{3})S_{\lambda}(x_{3}-x_{3})\sum_{x_{2}\in\Z^{d}}B_{\lambda}(x_{2})(S_{\lambda}(x_{1}-x_{2})S_{\lambda}(x_{2}-x_{3})\\
    &\approx \frac{1}{|S_{\lambda}|}\sum_{x_{1},x_{3}\in\Z^{d}}B_{\lambda}(x_{1})B_{\lambda}(x_{3})S_{\lambda}(x_{1}-x_{3})\\
    &=\sum_{x_{1}\in\Z^{d}}B_{\lambda}(x_{1})A_{\lambda}B_{\lambda}(x_{1})\approx \sum_{x_{1}\in\Z^{d}}B_{\lambda}(x_{1})\\
    &\approx \lambda^{\frac{d}{2}}\approx \lambda^{\frac{d}{2}(1-\frac{1}{p_{1}}-\frac{1}{p_{2}}-\frac{1}{p_{3}}-\frac{1}{p_{4}})}\|f_{1}\|_{p_{1}}\|f_{2}\|_{p_{2}}\|f_{3}\|_{p_{3}}\|f_{4}\|_{p_{4}}.
\end{align*}

Thus, the exponent on \(\lambda\) is sharp.


\section{The Y-shape, \(Y\)}\label{y section}

Define
\[\Ld_{Y}(f_{1},f_{2},f_{3},f_{4})=\frac{1}{N_{Y}}\sum_{x_{1},\dots,x_{4}\in\Z^{d}}\left(\prod_{i=1}^{4}f_{i}(x_{1})\right)S_{\lambda}(x_{3}-x_{1})S_{\lambda}(x_{3}-x_{2})S_{\lambda}(x_{3}-x_{4}).\]

\begin{theorem}\label{Ytheorem}
    If \(d\geq 5\) and \(1<p_{1},p_{2},p_{3},p_{4}\) with \(\frac{1}{p_{1}}+\frac{1}{p_{2}}+\frac{1}{p_{3}}+\frac{1}{p_{4}}>1\), then there exist constants \(C_{p_{1},p_{2},p_{3},p_{4}}\) such that for all \(\lambda\in\mathcal{R}_{Y}\) we have the \(\ell^{p}\) improving inequality
    \begin{equation}\label{Yeq}\Ld_{Y}(f_{1},f_{2},f_{3},f_{4})\leq C_{p_{1},p_{2},p_{3},p_{4}}\lambda^{\frac{d}{2}(1-\frac{1}{p_{1}}-\frac{1}{p_{2}}-\frac{1}{p_{3}}-\frac{1}{p_{4}})}\|f_{1}\|_{p_{1}}\|f_{2}\|_{p_{2}}\|f_{3}\|_{p_{3}}\|f_{4}\|_{p_{4}}\end{equation}
    provided \(\left(\frac{1}{p_{1}},\frac{1}{p_{2}},\frac{1}{p_{3}},\frac{1}{p_{4}}\right)\) is in the convex hull of the points:
\begin{multicols}{4}
    \begin{itemize}
            \item \((1,0,0,0)\)
            \item \((0,1,0,0)\)
            \item \((0,0,1,0)\)
            \item \((0,0,0,1)\)
            \columnbreak
            \item \((0,\frac{d-1}{d+1},\frac{d-1}{d+1},0)\)
            \item \((0,0,\frac{d-1}{d+1},\frac{d-1}{d+1})\)
            \item \((\frac{d-1}{d+1},0,\frac{d-1}{d+1},0)\)
            \item \((\frac{d-1}{d+1},\frac{d-1}{d+1},\frac{d-5}{d+1},\frac{d-1}{d+1})\)
           \columnbreak
            \item \((\frac{d-1}{d+1}, \frac{d^{2}-5}{d^{2}-1},0,0)\)
            \item \((\frac{d^{2}-5}{d^{2}-1},\frac{d-1}{d+1},0,0)\)
            \item \((0,\frac{d-1}{d+1}, 0, \frac{d^{2}-5}{d^{2}-1})\)
             \item \((0,\frac{d^{2}-5}{d^{2}-1},0,\frac{d-1}{d+1})\)
             \columnbreak
             \item \((\frac{d-1}{d+1},0,0, \frac{d^{2}-5}{d^{2}-1})\)
            \item \((\frac{d^{2}-5}{d^{2}-1},0,0,\frac{d-1}{d+1})\)
            \item \((\frac{d-1}{d+1},\frac{d-1}{d+1},\frac{d-3}{d+1},0)\)
            \item \((0,\frac{d-1}{d+1},\frac{d-3}{d+1},\frac{d-1}{d+1})\)
            \item \((\frac{d-1}{d+1},0,\frac{d-3}{d+1},\frac{d-1}{d+1})\)
        \end{itemize}
\end{multicols}
\end{theorem}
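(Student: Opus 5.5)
The Y-shape has center $x_3$ joined to $x_1,x_2,x_4$, and the form factors through spherical averages at the center. Since $N_Y\approx|S_\lambda|^3$, summing out the leaves gives
\[\Ld_{Y}(f_{1},f_{2},f_{3},f_{4})\approx\sum_{x_3\in\Z^d} f_3(x_3)\,A_\lambda f_1(x_3)\,A_\lambda f_2(x_3)\,A_\lambda f_4(x_3)=\langle f_3, A_\lambda f_1\cdot A_\lambda f_2\cdot A_\lambda f_4\rangle .\]
This is the same structure as the proof of Theorem \ref{2chain}, now with three leaves instead of two. The plan is to choose exponents $q_1,q_2,q_4$ with
\[\frac{1}{q_1}+\frac{1}{q_2}+\frac{1}{q_4}+\frac{1}{p_3}=1\]
such that each pair $(\frac{1}{p_i},\frac{1}{q_i})$, $i=1,2,4$, lies in the region where (\ref{spheq}) holds. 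Hölder's inequality followed by (\ref{spheq}) then gives
\[\Ld_Y\lesssim \|f_3\|_{p_3}\prod_{i\in\{1,2,4\}}\lambda^{\frac d2(\frac1{q_i}-\frac1{p_i})}\|f_i\|_{p_i}=\lambda^{\frac d2(1-\sum_i\frac1{p_i})}\prod_{i=1}^4\|f_i\|_{p_i}.\]
The exponents combine exactly because $\sum_{i\in\{1,2,4\}}\frac1{q_i}=1-\frac1{p_3}$. Exponent sharpness then follows verbatim from the ball example $f_i=B_\lambda$, using $A_\lambda B_\lambda\approx B_\lambda$.

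The substance is determining when such $q_i$ exist. For each leaf, (\ref{spheq}) allows $\frac1{q_i}$ to range over an interval $(L_i,\frac1{p_i})$ (with the endpoint $\frac1{p_i}$ available via the trivial $\ell^p\to\ell^p$ bound). The lower endpoint is
\[L_i=\frac{2}{d-1}\cdot\frac1{p_i}\quad\text{if }\frac1{p_i}<\frac{d-1}{d+1},\qquad L_i=\frac{d-1}{2}\Big(\frac1{p_i}-1\Big)+1\quad\text{if }\frac1{p_i}\ge\frac{d-1}{d+1}.\]
A triple $\frac1{q_i}$ summing to $1-\frac1{p_3}$ exists if and only if
\[\sum_{i} L_i<1-\frac1{p_3}\le\sum_i\frac1{p_i}.\]
The right inequality is the hypothesis $\sum_{i=1}^4\frac1{p_i}>1$. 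The left inequality gives one linear condition in each of the $2^3=8$ cases, according to which leaf exponents exceed $\frac{d-1}{d+1}$. By the permutation symmetry of the leaves $1,2,4$, these reduce to four essentially distinct cases: no leaf, one leaf, two leaves, or three leaves above the threshold.

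To finish, I would check that the union of these polyhedral pieces is the convex hull of the listed points. First compute the vertices of each piece. The faces $\frac1{p_j}=0$ for a leaf $j$ reproduce the 2-chain region of Theorem \ref{2chain}, which accounts for the points involving $\frac{d^2-5}{d^2-1}$ and $\frac{d-3}{d+1}$. The face $\frac1{p_3}=0$ together with all three leaves at $\frac{d-1}{d+1}$ gives a condition that forces the center coordinate down to $\frac{d-5}{d+1}$, producing the vertex $(\frac{d-1}{d+1},\frac{d-1}{d+1},\frac{d-5}{d+1},\frac{d-1}{d+1})$. Then confirm that no additional vertices appear. One could alternatively argue by interpolation: each listed vertex is a (limit) point where the Hölder argument applies, and multilinear complex interpolation fills the convex hull, with the endpoint vertices handled by an $\epsilon$-approach as in earlier sections.

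The main obstacle is this bookkeeping: verifying that the piecewise-linear region cut out by $\sum L_i<1-\frac1{p_3}$ is convex and coincides with the stated hull, especially in the mixed cases where the two formulas for $L_i$ meet. I would handle it by computing the vertex set case by case and relying on interpolation for convexity, rather than proving convexity directly.
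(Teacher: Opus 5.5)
Your argument is the paper's. It uses the same factorization $\Ld_Y\approx\langle f_3,A_\lambda f_1\cdot A_\lambda f_2\cdot A_\lambda f_4\rangle$, H\"older with $\frac1{q_1}+\frac1{q_2}+\frac1{q_4}+\frac1{p_3}=1$, and an eight-case search for admissible $q_i$. The paper simply asserts that this search yields the 17-point hull. Your criterion $\sum_i L_i<1-\frac1{p_3}\le\sum_i\frac1{p_i}$ is right. The bookkeeping you flag as the main obstacle, however, should go differently from how you plan it.

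\textbf{Convexity is immediate.} $L(x)=\max\{\frac{2}{d-1}x,\ \frac{d-1}{2}(x-1)+1\}$ is convex. So the admissible set is the intersection of eight half-spaces with $\{\sum_i\frac1{p_i}\ge1\}$, and no interpolation is needed. Since the theorem only asserts a sufficient condition, you need only check that each of the 17 listed points satisfies $\frac1{p_3}+L(\frac1{p_1})+L(\frac1{p_2})+L(\frac1{p_4})\le1$. This holds, with equality, for every one of them.

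\textbf{The step ``confirm that no additional vertices appear'' would fail for $d\ge6$.} Put $t=\frac{d-1}{d+1}$ and $u=\frac{d^2-9}{d^2-1}$. Consider $(t,t,0,u)$, $(t,u,0,t)$, $(u,t,0,t)$: two leaves sit at the kink, $\frac1{p_3}=0$, and the third leaf solves $L(u)=1-\frac{4}{d+1}$. These are vertices of the admissible set, because $\frac1{p_3}=0$ together with the four half-spaces active at the two kinks has rank $4$. They are extreme points of a convex set containing the hull and are not among the 17 listed points, so they lie outside the stated hull. For $d=5$ they merge with $(t,t,\frac{d-5}{d+1},t)$. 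The method therefore gives strictly more than the theorem states, and you should prove only the containment.

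\textbf{Two smaller points.} First, $(t,t,\frac{d-5}{d+1},t)$ comes from all three leaves at the kink with $\frac1{p_3}=1-\frac{6}{d+1}$, not from the face $\frac1{p_3}=0$. Second, every listed vertex lies on $\frac1{p_3}+\sum L=1$. There the strict inequality is unavailable, as is the endpoint $p=\frac{d+1}{d-1}$ excluded in Theorem \ref{hklthm}. An $\epsilon$-approach reaches nearby points but not these boundary points themselves. Your argument shares this looseness with the paper's.
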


We note that 
\begin{align*}
    \Ld_{Y}(f_{1},f_{2},f_{3},f_{4})&\approx\frac{1}{|S_{\lambda}|^{3}}\sum_{x_{1},\dots,x_{4}\in\Z^{d}}\left(\prod_{i=1}^{4}f_{i}(x_{1})\right)S_{\lambda}(x_{3}-x_{1})S_{\lambda}(x_{3}-x_{2})S_{\lambda}(x_{3}-x_{4})\\
    &=\frac{1}{|S_{\lambda}|^{3}}\sum_{x_{1},x_{2}\in\Z^{d},x_{3}}\left(\prod_{i=1}^{3}f_{i}(x_{1})\right)S_{\lambda}(x_{3}-x_{1})S_{\lambda}(x_{3}-x_{2})\sum_{x_{4}}S_{\lambda}(x_{3}-x_{4})\\
    &=\frac{1}{|S_{\lambda}|^{2}}\sum_{x_{1},\dots,x_{3}}\left(\prod_{i=1}^{3}f_{i}(x_{1})\right)A_{\lambda}f_{4}(x_{3})S_{\lambda}(x_{3}-x_{1})S_{\lambda}(x_{3}-x_{2})\\
    &=\frac{1}{|S_{\lambda}|^{2}}\sum_{x_{1},x_{3}}f_{1}(x_{1})f_{3}(x_{3})A_{\lambda}f_{4}(x_{3})S_{\lambda}(x_{3}-x_{1})\sum_{x_{2}}f_{2}(x_{2})S_{\lambda}(x_{3}-x_{2})\\
    &=\frac{1}{|S_{\lambda}|}\sum_{x_{3}}f_{3}(x_{3})A_{\lambda}f_{4}(x_{3})A_{\lambda}f_{2}(x_{3})\sum_{x_{1}}f_{1}(x_{1})S_{\lambda}(x_{3}-x_{1})\\
    &=\frac{1}{|S_{\lambda}|}\sum_{x_{3}}f_{3}(x_{3})A_{\lambda}f_{4}(x_{3})A_{\lambda}f_{2}(x_{3})A_{\lambda}f_{1}(x_{3})\\
    &=\langle f_{3},A_{\lambda}f_{4}\cdot A_{\lambda}f_{2}\cdot A_{\lambda}f_{1}\rangle
\end{align*}
Assume there exist \(\frac{1}{q_{1}},\frac{1}{q_{2}},\frac{1}{q_{4}}\) such that \(\frac{1}{q_{1}}+\frac{1}{q_{2}}+\frac{1}{p_{3}}+\frac{1}{q_{4}}=1\) and the points \(\left(\frac{1}{p_{i}},\frac{1}{q_{i}}\right)\), (\(i=1,2,4\)), are in the region of \(\ell^{p}\) improving with the exponent \(\frac{d}{2}(\frac{1}{q_{i}}-\frac{1}{p_{i}})\) on \(\lambda\). Then,

\begin{align*}
    \Ld_{Y}(f_{1},f_{2},f_{3},f_{4})&\approx\langle f_{3},A_{\lambda}f_{4}\cdot A_{\lambda}f_{2}\cdot A_{\lambda}f_{1}\rangle\\
    &\leq \|f_{3}\|_{p_{3}}\|A_{\lambda}f_{1}\|_{q_{1}}\|A_{\lambda}f_{2}\|_{q_{2}}\|A_{\lambda}f_{4}\|_{q_{4}}\\
    &\lesssim \lambda^{\frac{d}{2}(\frac{1}{q_{1}}-\frac{1}{p_{1}}+\frac{1}{q_{2}}-\frac{1}{p_{2}}+\frac{1}{q_{4}}-\frac{1}{p_{4}})}\|f_{1}\|_{p_{1}}\|f_{2}\|_{p_{2}}\|f_{3}\|_{p_{3}}\|f_{4}\|_{p_{4}}\\
    &=\lambda^{\frac{d}{2}(1-\frac{1}{p_{1}}-\frac{1}{p_{2}}-\frac{1}{p_{3}}-\frac{1}{p_{4}})}\|f_{1}\|_{p_{1}}\|f_{2}\|_{p_{2}}\|f_{3}\|_{p_{3}}\|f_{4}\|_{p_{4}}.
\end{align*}

Finding when such \(\frac{1}{q_{1}},\frac{1}{q_{2}},\frac{1}{q_{4}}\) exist breaks down into 8 cases. Following the methods outlined in the case of \(P_{2}\), we find that such \(\frac{1}{q_{1}},\frac{1}{q_{2}},\frac{1}{q_{4}}\) exist when \(\left(\frac{1}{p_{1}},\frac{1}{p_{2}},\frac{1}{p_{3}},\frac{1}{p_{4}}\right)\) is in the convex hull of the following 17 points:

\begin{multicols}{4}
    \begin{itemize}
            \item \((1,0,0,0)\)
            \item \((0,1,0,0)\)
            \item \((0,0,1,0)\)
            \item \((0,0,0,1)\)
            \columnbreak
            \item \((0,\frac{d-1}{d+1},\frac{d-1}{d+1},0)\)
            \item \((0,0,\frac{d-1}{d+1},\frac{d-1}{d+1})\)
            \item \((\frac{d-1}{d+1},0,\frac{d-1}{d+1},0)\)
            \item \((\frac{d-1}{d+1},\frac{d-1}{d+1},\frac{d-5}{d+1},\frac{d-1}{d+1})\)
           \columnbreak
            \item \((\frac{d-1}{d+1}, \frac{d^{2}-5}{d^{2}-1},0,0)\)
            \item \((\frac{d^{2}-5}{d^{2}-1},\frac{d-1}{d+1},0,0)\)
            \item \((0,\frac{d-1}{d+1}, 0, \frac{d^{2}-5}{d^{2}-1})\)
             \item \((0,\frac{d^{2}-5}{d^{2}-1},0,\frac{d-1}{d+1})\)
             \columnbreak
             \item \((\frac{d-1}{d+1},0,0, \frac{d^{2}-5}{d^{2}-1})\)
            \item \((\frac{d^{2}-5}{d^{2}-1},0,0,\frac{d-1}{d+1})\)
            \item \((\frac{d-1}{d+1},\frac{d-1}{d+1},\frac{d-3}{d+1},0)\)
            \item \((0,\frac{d-1}{d+1},\frac{d-3}{d+1},\frac{d-1}{d+1})\)
            \item \((\frac{d-1}{d+1},0,\frac{d-3}{d+1},\frac{d-1}{d+1})\)
        \end{itemize}
\end{multicols}

\subsection{Exponent sharpness}
To establish that \(\frac{d}{2}\left(1-\frac{1}{p_{1}}-\frac{1}{p_{2}}-\frac{1}{p_{3}}-\frac{1}{p_{4}}\right)\) is the smallest exponent that we can guarantee, take \(f_{1}(x)=f_{2}(x)=f_{3}(x)=f_{4}(x)=B_{\lambda}(x)\). Then \[\|f_{1}\|_{p_{1}}\|f_{2}\|_{p_{2}}\|f_{3}\|_{p_{3}}\|f_{4}\|_{p_{4}}\approx\lambda^{\frac{d}{2}(\frac{1}{p_{1}}+\frac{1}{p_{2}}+\frac{1}{p_{3}}+\frac{1}{p_{4}})},\]
and
\begin{align*}
    \Ld_{Y}&(f_{1},f_{2},f_{3},f_{4})\\
    &\approx\frac{1}{|S_{\lambda}|^{3}}\sum_{x_{1},x_{2},x_{3},x_{4}\in\Z^{d}}\left(\prod_{i=1}^{4}B_{\lambda}(x_{i})\right)S_{\lambda}(x_{1}-x_{3})S_{\lambda}(x_{2}-x_{3})S_{\lambda}(x_{3}-x_{4})\\
    &=\frac{1}{|S_{\lambda}|^{3}}\sum_{x_{3}}B_{\lambda}(x_{3})\left(\sum_{x_{1}}B_{\lambda}(x_{1})S_{\lambda}(x_{1}-x_{3})\right)\left(\sum_{x_{2}}B_{\lambda}(x_{2})S_{\lambda}(x_{2}-x_{3})\right)\left(\sum_{x_{4}}B_{\lambda}(x_{4})S_{\lambda}(x_{4}-x_{3})\right)\\
    &=\sum_{x_{3}}B_{\lambda}(x_{3})A_{\lambda}B_{\lambda}(x_{3})\approx \sum_{x_{3}}B_{\lambda}(x_{3})\\
    &\approx \lambda^{\frac{d}{2}}\approx\lambda^{\frac{d}{2}(1-\frac{1}{p_{1}}-\frac{1}{p_{2}}-\frac{1}{p_{3}}-\frac{1}{p_{4}})}\|f_{1}\|_{p_{1}}\|f_{2}\|_{p_{2}}\|f_{3}\|_{p_{3}}\|f_{4}\|_{p_{4}}
\end{align*}

Thus, the exponent on \(\lambda\) is sharp.


\section{The \(k\)-chain, \(P_{k}\)}\label{chain section}

        For \(\lambda\in\mathcal{R}_{P_{k}}\), \(k\geq 1\) and  \(f_{i}\) functions on \(\Z^{d}\) for \(i=1,...,k+1\), we define \[\Ld_{P_{k}}(f_{1},...,f_{k}, f_{k+1})=\frac{1}{N_{P_{k}}}\sum_{x_{1},...,x_{k+1}\in\Z^{d}}\left(\prod_{i=1}^{k}S_{\lambda}(x_{i}-x_{i+1})\right)\left(\prod_{i=1}^{k+1}f_{i}(x_{i})\right).\]

    \begin{theorem}\label{kchainthm}
    If \(d\geq 5\), and \(\infty>p_{1},...,p_{k+1}>1\) are such that \(\sum_{i=1}^{k+1}\frac{1}{p_{i}}>1\), then there exist constants \(C\) such that for all \(\lambda\in\mathcal{R}_{P_{k}}\) we have the \(\ell^{p}\) improving inequality \[\Ld_{P_{k}}(f_{1},...,f_{k+1})\leq C\lambda^{\frac{d}{2}\left(1-\sum_{i=1}^{k+1}\frac{1}{p_{i}}\right)}\prod_{i=1}^{k+1}\|f_{i}\|_{p_{i}}\] provided that one of the following holds:
    \begin{enumerate}
    \item \(\frac{1}{p_{2}}+\frac{2}{d-1}\left(\frac{1}{p_{1}}+\sum_{i=3}^{k+1}\frac{1}{p_{i}}\right)<1\) and \(\frac{1}{p_{1}}, \sum_{i=3}^{k+1}\frac{1}{p_{i}}\leq \frac{d-1}{d+1}\).
    \item \(\frac{2}{d-1}\left(\sum_{i=3}^{k+1}\frac{1}{p_{i}}\right)+\frac{1}{p_{2}}+\frac{d-1}{2}\left(\frac{1}{p_{1}}\right)<\frac{d-1}{2}\) and \(\frac{1}{p_{1}}>\frac{d-1}{d+1}\geq\sum_{i=3}^{k+1}\frac{1}{p_{i}}\)
    \item \(\frac{2}{d-1}\left(\frac{1}{p_{1}}\right)+\frac{1}{p_{2}}+\frac{d-1}{2}\sum_{i=3}^{k+1}\frac{1}{p_{i}}<\frac{d-1}{2} \) and \(\sum_{i=3}^{k+1}\frac{1}{p_{i}}>\frac{d-1}{d+1}\geq\frac{1}{p_{1}}\).
    \item \(\frac{1}{p_{1}}+\sum_{i=3}^{k+1}\frac{1}{p_{i}}+\frac{2}{d-1}\left(\frac{1}{p_{2}}\right)<2-\frac{2}{d-1}\) and \(\sum_{i=3}^{k+1}\frac{1}{p_{i}},\frac{1}{p_{1}}>\frac{d-1}{d+1}\).
    \end{enumerate}
    \end{theorem}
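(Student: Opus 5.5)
The four conditions in Theorem \ref{kchainthm} are exactly the four conditions of Remark \ref{2chainrmk} for \(\Ld_{P_{2}}\), applied at the point \(\left(\frac{1}{p_{1}},\frac{1}{p_{2}},\sum_{i=3}^{k+1}\frac{1}{p_{i}}\right)\). So the plan is to collapse the tail \(x_{3},\dots,x_{k+1}\) of the chain into a single function placed at vertex \(x_{3}\), and then invoke Theorem \ref{2chain}.

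First, sum out the tail from the end. Since \(N_{P_{k}}\approx|S_{\lambda}|^{k}\), summing over \(x_{k+1}\) produces \(|S_{\lambda}|\,A_{\lambda}f_{k+1}(x_{k})\). Iterating gives
\[
\Ld_{P_{k}}(f_{1},\dots,f_{k+1})\approx\Ld_{P_{2}}(f_{1},f_{2},g),\qquad g=f_{3}\cdot A_{\lambda}\bigl(f_{4}\cdot A_{\lambda}(\cdots f_{k}\cdot A_{\lambda}f_{k+1})\bigr).
\]
Equivalently, \(\Ld_{P_{k}}(f_{1},\dots,f_{k+1})\approx\Ld_{P_{k-1}}(f_{1},\dots,f_{k-1},f_{k}\cdot A_{\lambda}f_{k+1})\), which is the same structure used for \(K_{3+t}\). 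We may take all \(f_{i}\geq0\).

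Next, bound \(g\) in \(\ell^{r}\) with \(\frac{1}{r}=\sum_{i=3}^{k+1}\frac{1}{p_{i}}\). The key claim is
\[
\|g\|_{r}\lesssim\prod_{i=3}^{k+1}\|f_{i}\|_{p_{i}}.
\]
This follows from the trivial \(\ell^{p}\to\ell^{p}\) contraction \(\|A_{\lambda}h\|_{p}\leq\|h\|_{p}\) together with Hölder. We induct from the innermost level: \(\|f_{k}A_{\lambda}f_{k+1}\|_{s}\leq\|f_{k}\|_{p_{k}}\|f_{k+1}\|_{p_{k+1}}\) where \(\frac{1}{s}=\frac{1}{p_{k}}+\frac{1}{p_{k+1}}\). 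We need \(s\geq1\) at each stage, which holds because \(\frac{1}{r}\leq\frac{d-1}{d+1}<1\) in cases 1 and 2.

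In cases 3 and 4 we could have \(\frac{1}{r}\) close to 1. There Hölder still applies provided \(\frac{1}{r}\leq1\). If \(\frac{1}{r}\) exceeds 1, I would instead use the nesting \(\ell^{p}\subset\ell^{q}\) to bound \(\|g\|_{r'}\) for some \(r'\) with \(\frac{1}{r'}\leq1\), at the cost of monotonicity. This boundary issue, ensuring \(\frac{1}{r}\leq1\) is automatic or harmless under conditions 3 and 4, is the step I expect to need care. Condition 4 combined with \(\frac{1}{p_{1}}>\frac{d-1}{d+1}\) should force \(\sum_{i\geq3}\frac{1}{p_{i}}<1\) once \(d\geq5\); I would verify this first.

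Finally, apply Theorem \ref{2chain} to \((f_{1},f_{2},g)\) with exponents \((p_{1},p_{2},r)\). Its hypotheses, in the half-space form of Remark \ref{2chainrmk}, are exactly conditions 1–4, and \(\frac{1}{p_{1}}+\frac{1}{p_{2}}+\frac{1}{r}=\sum_{i}\frac{1}{p_{i}}>1\). This yields
\[
\Ld_{P_{k}}\lesssim\lambda^{\frac{d}{2}\left(1-\frac{1}{p_{1}}-\frac{1}{p_{2}}-\frac{1}{r}\right)}\|f_{1}\|_{p_{1}}\|f_{2}\|_{p_{2}}\|g\|_{r},
\]
and the exponent equals \(\frac{d}{2}\left(1-\sum_{i=1}^{k+1}\frac{1}{p_{i}}\right)\). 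Combining this with the bound on \(\|g\|_{r}\) gives the theorem.

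Sharpness of the exponent would follow as before, by taking all \(f_{i}=B_{\lambda}\) and using \(A_{\lambda}B_{\lambda}\approx B_{\lambda}\) repeatedly along the chain.
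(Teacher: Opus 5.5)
This is essentially the paper's proof. The paper inducts on $k$ via $\Lambda_{P_k}\approx\Lambda_{P_{k-1}}(f_1,\dots,f_{k-1},f_k\cdot A_\lambda f_{k+1})$ down to Theorem~\ref{2chain} (with $k=1$ being Theorem~\ref{Lthm}), and your one-step collapse to $\Lambda_{P_2}(f_1,f_2,g)$ is that induction unrolled.

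The boundary issue you flag is automatic. Condition~3 directly forces $\sum_{i\geq 3}\frac{1}{p_i}<1$. Condition~4 together with $\frac{1}{p_1}>\frac{d-1}{d+1}$ forces $\sum_{i\geq 3}\frac{1}{p_i}<2-\frac{2}{d-1}-\frac{d-1}{d+1}=\frac{d^2-5}{d^2-1}<1$. So the nesting fallback is never needed, which matters because it would not preserve the exponent anyway.
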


\begin{proof}
    We proceed through induction.\\
    When \(k=1\), Theorem \ref{kchainthm} is equivalent to Theorem \ref{Lthm}, and when \(k=2\), Theorem \ref{kchainthm} is equivalent to Theorem \ref{2chain}.\\
    Now, assume that Theorem \ref{kchainthm} holds for \(k=n-1\) for some \(n\in\N\). Let \(k=n\), \(\lambda\in\Z\), and \(f_{i}\) be functions on \(\Z^{d}\) for \(i=1,...,k+1\) with \(d\geq5\), and \(\infty>p_{1},...,p_{k+1}>1\). Assume \(\sum_{i=1}^{k+1}\frac{1}{p_{k+1}}>1\) and one of the following holds:
    \begin{enumerate}
    \item \(\frac{1}{p_{2}}+\frac{2}{d-1}\left(\frac{1}{p_{1}}+\sum_{i=3}^{k+1}\frac{1}{p_{i}}\right)<1\) and \(\frac{1}{p_{1}}, \sum_{i=3}^{k+1}\frac{1}{p_{i}}\leq \frac{d-1}{d+1}\).
    \item \(\frac{2}{d-1}\left(\sum_{i=3}^{k=1}\frac{1}{p_{i}}\right)+\frac{1}{p_{2}}+\frac{d-1}{2}\left(\frac{1}{p_{1}}\right)<\frac{d-1}{2}\) and \(\frac{1}{p_{1}}>\frac{d-1}{d+1}\geq\sum_{i=3}^{k+1}\frac{1}{p_{i}}\)
    \item \(\frac{2}{d-1}\left(\frac{1}{p_{1}}\right)+\frac{1}{p_{2}}+\frac{d-1}{2}\sum_{i=3}^{k+1}\frac{1}{p_{i}}<\frac{d-1}{2} \) and \(\sum_{i=3}^{k+1}\frac{1}{p_{i}}>\frac{d-1}{d+1}\geq\frac{1}{p_{1}}\).
    \item \(\frac{1}{p_{1}}+\sum_{i=3}^{k+1}\frac{1}{p_{i}}+\frac{2}{d-1}\left(\frac{1}{p_{2}}\right)<2-\frac{2}{d-1}\) and \(\sum_{i=3}^{k+1}\frac{1}{p_{i}},\frac{1}{p_{1}}>\frac{d-1}{d+1}\).
    \end{enumerate}

    This is equivalent to saying that one of the following holds:
    \begin{enumerate}
    \item \(\frac{1}{p_{2}}+\frac{2}{d-1}\left(\frac{1}{p_{1}}+\sum_{i=3}^{k-1}\frac{1}{p_{i}}+\left(\frac{1}{p_{k}}+\frac{1}{p_{k+1}}\right)\right)<1\) and \(\frac{1}{p_{1}}, \sum_{i=3}^{k-1}\frac{1}{p_{i}}+\left(\frac{1}{p_{k}}+\frac{1}{p_{k+1}}\right)\leq \frac{d-1}{d+1}\).
    \item \(\frac{2}{d-1}\left(\sum_{i=3}^{k-1}\frac{1}{p_{i}}+\left(\frac{1}{p_{k}}+\frac{1}{p_{k+1}}\right)\right)+\frac{1}{p_{2}}+\frac{d-1}{2}\left(\frac{1}{p_{1}}\right)<\frac{d-1}{2}\) and \(\frac{1}{p_{1}}>\frac{d-1}{d+1}\geq\sum_{i=3}^{k-1}\frac{1}{p_{i}}+\left(\frac{1}{p_{k}}+\frac{1}{p_{k+1}}\right)\)
    \item \(\frac{2}{d-1}\left(\frac{1}{p_{1}}\right)+\frac{1}{p_{2}}+\frac{d-1}{2}\sum_{i=3}^{k-1}\frac{1}{p_{i}}+\left(\frac{1}{p_{k}}+\frac{1}{p_{k+1}}\right)<\frac{d-1}{2} \) and \(\sum_{i=3}^{k+1}\frac{1}{p_{i}}>\frac{d-1}{d+1}\geq\frac{1}{p_{1}}\).
    \item \(\frac{1}{p_{1}}+\sum_{i=3}^{k-1}\frac{1}{p_{i}}+\left(\frac{1}{p_{k}}+\frac{1}{p_{k+1}}\right)+\frac{2}{d-1}\left(\frac{1}{p_{2}}\right)<2-\frac{2}{d-1}\) and \(\sum_{i=3}^{k-1}\frac{1}{p_{i}}+\left(\frac{1}{p_{k}}+\frac{1}{p_{k+1}}\right),\frac{1}{p_{1}}>\frac{d-1}{d+1}\).
    \end{enumerate}
    That is to say, that the point \(\left(\frac{1}{p_{1}},\frac{1}{p_{2}},\dots,\frac{1}{p_{k-1}},\frac{1}{p_{k}}+\frac{1}{p_{k+1}}\right)\) is in the region of \(\ell^{p}\) improvement for \(\Lambda_{P_{k-1}}\) by the induction hypothesis.\\

     We have that
        \begin{align*}
            \Ld_{P_{k}}&(f_{1},f_{2},...f_{k+1})\\
            &\approx\frac{1}{|S_{\lambda}|^{k}}\sum_{x_{1},...,x_{k+1}\in\Z^{d}}\left(\prod_{i=1}^{k}S_{\lambda}(x_{i}-x_{i+1})\right)\left(\prod_{i=1}^{k+1}f_{i}(x_{i})\right)\\
            &=\frac{1}{|S_{\lambda}|^{k-1}}\sum_{x_{1},...,x_{k}\in\Z^{d}}\left(\prod_{i=1}^{k-1}S_{\lambda}(x_{i}-x_{i+1})\right)\left(\prod_{i=1}^{k-1}f_{i}(x_{i})\right)f_{k}(x_{k})\frac{1}{|S_{\lambda}|}\sum_{x_{k+1}\in\Z^{d}}S_{\lambda}(x_{k}-x_{k+1})f_{k+1}(x_{k+1})\\
            &=\frac{1}{|S_{\lambda}|^{k-1}}\sum_{x_{1},...,x_{k}\in\Z^{d}}\left(\prod_{i=1}^{k-1}S_{\lambda}(x_{i}-x_{i+1})\right)\left(\prod_{i=1}^{k-1}f_{i}(x_{i})\right)f_{k}(x_{k})A_{\lambda}f_{k+1}(x_{k})\\
            &\approx\Ld_{P_{k-1}}(f_{1},...f_{k-1},f_{k}\cdot A_{\lambda}f_{k+1}).\\
        \end{align*}
        
    Then, as the point \(\left(\frac{1}{p_{1}},\frac{1}{p_{2}},\dots,\frac{1}{p_{k-1}},\frac{1}{p_{k}}+\frac{1}{p_{k+1}}\right)\) is in the region of \(\ell^{p}\) improvement with the desired exponent for \(\Ld_{P_{k-1}}\), we have that
    \begin{align*}
        \Ld_{P_{k}}(f_{1},f_{2},...f_{k})&\approx\Ld_{P_{k-1}}(f_{1},...f_{k-1},f_{k}\cdot A_{\lambda}f_{k+1})\\
        &\lesssim\lambda^{\frac{d}{2}(1-\sum_{i=1}^{k+1}\frac{1}{p_{i}})}\|f_{1}\|_{p_{1}}...\|f_{k-1}\|_{p_{k-1}}\|f_{k}\cdot A_{\lambda}f_{k+1}\|_{(\frac{1}{p_{k}}+\frac{1}{p_{k+1}})^{-1}}\\
       &\leq \lambda^{\frac{d}{2}(1-\sum_{i=1}^{k+1}\frac{1}{p_{i}})}\|f_{1}\|_{p_{1}}...\|f_{k-1}\|_{p_{k-1}}\|f_{k}\|_{p_{k}}\|A_{\lambda}f_{k+1}\|_{p_{k+1}}\\
        &\lesssim \lambda^{\frac{d}{2}(1-\sum_{i=1}^{k+1}\frac{1}{p_{i}})}\|f_{1}\|_{p_{1}}...\|f_{k-1}\|_{p_{k-1}}\|f_{k}\|_{p_{k}}\|f_{k+1}\|_{p_{k+1}}.
    \end{align*}
    
    Thus, if Theorem \ref{kchainthm} holds for \(k=n-1\), then it holds for \(k=n\). \\
    Therefore, by induction, we have proved Theorem \ref{kchainthm}.

\end{proof}

\subsection{Exponent Sharpness}

To show that the exponent in Theorem \ref{kchainthm} is the best we can guarantee, take \(f_{i}=B_{\lambda}\), that is, a ball of radius \(\sqrt{\lambda}\) for \(1\leq i \leq k+1\). Then \[\prod_{i=1}^{k+1}\|f_{i}\|_{p_{i}}\approx \lambda^{\frac{d}{2}\sum_{i=1}^{k+1}\frac{1}{p_{i}}}.\]
Now,
\begin{align*}
\Ld_{P_{k}}(f_{1},...,f_{k+1})&\approx\frac{1}{|S_{\lambda}|^{k}}\sum_{x_{1},...,x_{k+1}\in\Z^{d}}\left(\prod_{i=1}^{k}S_{\lambda}(x_{i}-x_{i+1})\right)\left(\prod_{i=1}^{k+1}B_{\lambda}(x_{i})\right)\\
&=\frac{1}{|S_{\lambda}|^{k}}\sum_{x_{1},...,x_{k}\in\Z^{d}}\left(\prod_{i=1}^{k-1}S_{\lambda}(x_{i}-x_{i+1})\right)\left(\prod_{i=1}^{k}B_{\lambda}(x_{i})\right)\sum_{x_{k+1}\in\Z^{d}}S_{\lambda}(x_{k}-x_{k+1})B_{\lambda}(x_{k+1})\\
&=\frac{1}{|S_{\lambda}|^{k-1}}\sum_{x_{1},...,x_{k}\in\Z^{d}}\left(\prod_{i=1}^{k-1}S_{\lambda}(x_{i}-x_{i+1})\right)\left(\prod_{i=1}^{k}B_{\lambda}(x_{i})\right)A_{\lambda}B_{\lambda}(x_{k+1})\\
&\approx \frac{1}{|S_{\lambda}|^{k-1}}\sum_{x_{1},...,x_{k}\in\Z^{d}}\left(\prod_{i=1}^{k-1}S_{\lambda}(x_{i}-x_{i+1})\right)\left(\prod_{i=1}^{k}B_{\lambda}(x_{i})\right)\\
&\vdots\\
&\approx \frac{1}{|S_{\lambda}|}\sum_{x_{1}\in\Z^{d}}B_{\lambda}(x_{1})\sum_{x_{2}\in\Z^{d}}B_{\lambda}(x_{2})S_{\lambda}(x_{1}-x_{2})\\
&=\sum_{x_{1}\in\Z^{d}}B_{\lambda}(x_{1})A_{\lambda}(B_{R})(x_{1})\approx \sum_{x_{1}\in\Z^{d}}B_{\lambda}(x_{1})\\
&\approx \lambda^{\frac{d}{2}}\approx\lambda^{\frac{d}{2}(1-\sum_{i=1}^{k+1}\frac{1}{p_{i}})}\prod_{i=1}^{k+1}\|f_{i}\|_{p_{i}}.
\end{align*}

Thus, the exponent on \(\lambda\) is sharp.

\section{The \(k\)-simplex, \(K_{k}\)}\label{sim section}

Given \(k\in\N\), for \(d\geq 2k+1\), we define

    \[\Ld_{K_{k}}(f_{1},f_{2},...,f_{k})=\frac{1}{N_{K_{k}}}\sum_{x_{1},...x_{k}\in\Z^{d}}\prod_{i=1}^{k}f_{i}(x_{i})\left(\prod_{i=1}^{k-1}\prod_{j=i+1}^{k}S_{\lambda}(x_{i}-x_{j})\right).\]

We have the following theorem:
\begin{theorem}\label{simtheorem}
    If \(d\geq 2k+1\) and \(1<p_{1},p_{2},...,p_{k}<\infty\) with \(\sum_{i=1}^{k}\frac{1}{p_{i}}>1\), then there exist constants \(C\) such that for all \(\lambda\in\mathcal{R}_{K_{k}}\) we have the \(\ell^{p}\) improving inequality
    \begin{equation}\label{simeq}\Ld_{K_{k}}(f_{1},f_{2},...,f_{k})\leq C\lambda^{\frac{d}{2}(1-\frac{1}{p_{1}}-\sum_{i=1}^{k}\frac{1}{p_{i}})}\prod_{i=1}^{k}\|f_{i}\|_{p_{i}}\end{equation}
    provided \(\left(\frac{1}{p_{1}},\frac{1}{p_{2}},...,\frac{1}{p_{k}}\right)\) is in the convex hull of the points \(x_{1},...,x_{k}\) described by
    \begin{itemize}
        \item For \(1\leq i\leq k\), \(x_{i}=1\), and \(x_{j}=0\) for \(j\neq i\), \(1\leq j\leq k\).
        \item For \(1\leq i,j\leq k\), \(x_{i}=x_{j}=\frac{d-1}{d+1}\), and \(x_{n}=0\) for \(n\neq i,j\), \(1\leq n\leq k\).
    \end{itemize}

\end{theorem}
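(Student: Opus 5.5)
We argue by induction on \(k\), following the reduction already used for the tetrahedron in Theorem \ref{tetrtheorem}. (The exponent in (\ref{simeq}) should read \(\frac{d}{2}(1-\sum_{i=1}^{k}\frac{1}{p_{i}})\); this is what we prove and what the ball example shows is sharp.) The base cases are \(k=2\), which is Theorem \ref{Lthm}, and \(k=3,4\), which are Theorems \ref{trithm} and \ref{tetrtheorem}. For the inductive step, assume the statement for \(K_{k-1}\) in dimension \(d\geq 2(k-1)+1\) and take \(d\geq 2k+1\). By the asymptotics of Siegel, Raghavan and Kitaoka recorded in Section \ref{size section}, we have \(N_{K_{k}}\approx\prod_{i=1}^{k}|S_{\lambda}^{d-i}|\) and \(N_{K_{k}}\approx N_{K_{k-1}}|S_{\lambda}^{d-k+1}|\).

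The key step is a pointwise counting bound. For any \(x_{1},\dots,x_{k-1}\) forming a nondegenerate regular \((k-2)\)-simplex of side \(\sqrt{\lambda}\), we need
\[\sum_{x_{k}\in\Z^{d}}\prod_{i=1}^{k-1}S_{\lambda}(x_{i}-x_{k})\lesssim|S_{\lambda}^{d-k+1}|,\]
uniformly in the configuration. The set counted is the integer points on a \((d-k+1)\)-dimensional sphere: the intersection of the affine space orthogonal to the simplex through its centroid with a sphere of radius comparable to \(\sqrt{\lambda}\). This is the step I expect to be the main obstacle. I would handle it as in \cite{AKP22,CLM21}, by reducing to representations of an integer by a positive definite quadratic form in \(d-k+1\) variables (the restriction of \(|\cdot|^{2}\) to the orthogonal lattice). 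Siegel's mass formula, together with Kitaoka's bounds on the local densities, gives an upper bound of order \(\lambda^{\frac{d-k-1}{2}}\) uniformly once \(d-k+1\geq k+2\), roughly. That is where the hypothesis \(d\geq 2k+1\) enters, and I would simply cite it in the same way the paper does for \(K_{3}\) and \(K_{4}\).

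With the counting bound in hand, bound \(f_{k}\) by \(\|f_{k}\|_{\infty}\) and sum out \(x_{k}\):
\[\Ld_{K_{k}}(f_{1},\dots,f_{k})\lesssim\frac{\|f_{k}\|_{\infty}|S_{\lambda}^{d-k+1}|}{N_{K_{k}}}\sum_{x_{1},\dots,x_{k-1}}\prod_{i=1}^{k-1}f_{i}(x_{i})S_{K_{k-1}}(x_{1},\dots,x_{k-1})\approx\|f_{k}\|_{\infty}\Ld_{K_{k-1}}(f_{1},\dots,f_{k-1}).\]
The inductive hypothesis then gives (\ref{simeq}) with \(p_{k}=\infty\) whenever \((\frac{1}{p_{1}},\dots,\frac{1}{p_{k-1}})\) lies in the convex hull of the points \(e_{i}\) and \(\frac{d-1}{d+1}(e_{i}+e_{j})\) for \(i,j\leq k-1\). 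Since \(K_{k}\) is invariant under every permutation of its vertices, the same estimate holds with any single index \(m\) sent to \(\infty\).

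Finally, we interpolate these \(k\) families of estimates. We use multilinear complex interpolation; the exponent \(\frac{d}{2}(1-\sum\frac{1}{p_{i}})\) is affine in \((\frac{1}{p_{i}})\), so it interpolates correctly. Every vertex listed in Theorem \ref{simtheorem} has at most two nonzero coordinates, so when \(k\geq 3\) it has a zero coordinate and already belongs to one of the families. Interpolating between vertices therefore yields the full convex hull. Since every point of the hull has \(\sum_{i}\frac{1}{p_{i}}\geq 1\), the restriction to \(\sum\frac{1}{p_{i}}>1\) and \(1<p_{i}<\infty\) is just the interior. Sharpness of the exponent follows, as in the earlier sections, by testing all \(f_{i}=B_{\lambda}\) and iterating the estimate \(\sum_{x}B_{\lambda}(x)\prod_{i<k}S_{\lambda}(x-x_{i})\approx|S_{\lambda}^{d-k+1}|\prod_{i<k}B_{\lambda}(x_{i})\).
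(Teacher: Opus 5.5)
Your argument is the paper's argument: induct on $k$, put $f_k$ in $\ell^\infty$, sum out the last vertex, use the permutation symmetry of $K_k$, and interpolate. You are also right that the exponent should be $\frac{d}{2}(1-\sum_i\frac{1}{p_i})$. The gap is exactly the step you flag, which the paper also asserts without proof.

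\textbf{The uniform counting bound is false.} To land on $\Ld_{K_{k-1}}$ you need
$\#\{x_k: |x_k-x_i|^2=\lambda,\ i<k\}\lesssim N_{K_k}/N_{K_{k-1}}\approx\lambda^{\frac{d-2k+2}{2}}$
uniformly over all regular configurations $x_1,\dots,x_{k-1}$. Your proposed $\lambda^{\frac{d-k-1}{2}}$ does not even match this normalization unless $k=3$.

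More seriously, Siegel's formula cannot give uniformity. Its main term is $\approx\det(Q)^{-1/2}\lambda^{\frac{d-k}{2}}$, where $Q$ is $|\cdot|^2$ restricted to the rank $d-k+2$ lattice orthogonal to the simplex. Here $\det Q\approx\lambda^{k-2}$ for typical configurations, but it can be $O(1)$.

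For instance, take $\lambda=2m^2$, $x_1=0$, and $x_{j+1}=m(e_1+e_{j+1})$ for $1\le j\le k-2$. The admissible $x_k$ are the points $(t,m-t,\dots,m-t,w)$ with $w\in\Z^{d-k+1}$ and $t^2+(k-2)(m-t)^2+|w|^2=2m^2$. There are $\approx m\cdot m^{d-k-1}\approx\lambda^{\frac{d-k}{2}}$ of them. That is a factor $\lambda^{\frac{k-2}{2}}$ more than needed, and $\lambda^{1/2}$ more than your own bound.

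\textbf{For $k\ge4$ the estimate itself fails along $\lambda=2m^2$.} Take $k=4$, which is allowed since $d\ge9$. Let $y=m(e_1+e_2)$, $f_1=1_{\{0\}}$, $f_2=1_{\{y\}}$, and $f_3=f_4=1_B$ with $B$ the ball of radius $2\sqrt\lambda$.

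The completing vertices are $x_j=(t_j,m-t_j,w_j)$ for $j=3,4$. Each ranges over a set of size $\approx\lambda^{\frac{d-3}{2}}$, subject to $x_3\cdot x_4=m^2$. Since $x_3\cdot x_4$ is spread over $\approx\lambda$ values (Siegel--Kitaoka for binary forms in $d-2\ge7$ variables), there are $\approx\lambda^{d-4}$ pairs. Hence $\Ld_{K_4}(f_1,\dots,f_4)\approx\lambda^{d-4}/N_{K_4}\approx\lambda^{-\frac{d-4}{2}}$.

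Now take the hull point $(1-t)(\frac{d-1}{d+1},\frac{d-1}{d+1},0,0)+t(0,0,\frac12,\frac12)$. There the right side of (\ref{simeq}) is $\lambda^{-\frac{d(d-3)}{2(d+1)}+O(t)}$. Since $(d-4)(d+1)<d(d-3)$, the inequality fails as $m\to\infty$ once $t>0$ is small.

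The same test for general $k\ge4$ gives $\Ld_{K_k}\approx\lambda^{\frac{k-2}{2}-\frac{d-2}{2}}$, against $\lambda^{\frac{d-1}{d+1}-\frac{d-2}{2}+O(t)}$, and $\frac{k-2}{2}\ge1>\frac{d-1}{d+1}$.

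So the base case Theorem \ref{tetrtheorem} that you invoke is itself not valid for such $\lambda$. No way of completing the counting step will prove the statement as written for all even $\lambda$; one would have to restrict $\mathcal{R}_{K_k}$ or weaken the range or exponent. For $k=3$ the same example costs only $\lambda^{1/2}<\lambda^{\frac{d-1}{d+1}}$, so it does not contradict Theorem \ref{trithm}. The uniform pointwise bound used in that proof fails in the same way, though.
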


The cases where \(k=2,3,4\) are the 1-chain, the triangle, and the tetrahedron respectively. To establish this for an arbitrary \(k\in\N\), we will proceed through induction, just as we used the triangle to establish \(\ell^{p}\) improving results for the tetrahedron.

\begin{proof}
    Suppose that \(k\in\N\), \(k\geq 4\) is such that Theorem \ref{simtheorem} holds. Suppose that \(\left(\frac{1}{p_{1}},\frac{1}{p_{2}},...,\frac{1}{p_{k}},\frac{1}{p_{k+1}}\right)\) is in the convex hull of the points \(x_{1},...,x_{k},x_{k+1}\) described by
    \begin{itemize}
        \item For \(1\leq i\leq k+1\), \(x_{i}=1\), and \(x_{j}=0\) for \(j\neq i\), \(1\leq j\leq k+1\).
        \item For \(1\leq i,j\leq k+1\), \(x_{i}=x_{j}=\frac{d-1}{d+1}\), and \(x_{n}=0\) for \(n\neq i,j\), \(1\leq n\leq k+1\).
    \end{itemize}
    Then, for each index, \(1\leq m\leq k+1\), \(\left(\frac{1}{p_{1}},\frac{1}{p_{2}},...,\frac{1}{p_{m-1}},\frac{1}{p_{m+1}},...,\frac{1}{p_{k}},\frac{1}{p_{k+1}}\right)\) is in the region of \(\ell^{p}\) improving described in Theorem \ref{simtheorem}.\\
    We will show the case where \(m=k+1\). The other \(k\) cases follow by symmetry.\\
    We have that 
    \begin{align*}
        \Ld_{K_{k+1}}(f_{1},f_{2},...,f_{k+1})&\approx\frac{1}{\prod_{i=1}^{k}|S_{\lambda}^{d-i}|}\sum_{x_{1},...x_{k+1}\in\Z^{d}}\prod_{i=1}^{k+1}f_{i}(x_{i})\left(\prod_{i=1}^{k}\prod_{j=i+1}^{k+1}S_{\lambda}(x_{i}-x_{j})\right)\\
        &=\frac{1}{\prod_{i=1}^{k}|S_{\lambda}^{d-i}|}\sum_{x_{1},...x_{k}\in\Z^{d}}\prod_{i=1}^{k}f_{i}(x_{i})\left(\prod_{i=1}^{k-1}\prod_{j=i+1}^{k}S_{\lambda}(x_{i}-x_{j})\right)\\&\quad\quad\quad\cdot\sum_{x_{k+1}\in\Z^{d}}f_{k}(x_{k+1})\prod_{i=1}^{k}(x_{i}-x_{k+1})\\
        &\lesssim \frac{\|f_{k+1}\|_{\infty}}{\prod_{i=1}^{k-1}|S_{\lambda}^{d-i}|}\sum_{x_{1},...x_{k}\in\Z^{d}}\prod_{i=1}^{k}f_{i}(x_{i})\left(\prod_{i=1}^{k-1}\prod_{j=i+1}^{k}S_{\lambda}(x_{i}-x_{j})\right)\\
        &\approx\|f_{k+1}\|_{\infty}\Lambda_{K_{k}}(f_{1},...,f_{k})
    \end{align*}
    Thus, since we have that \((\frac{1}{p_{1}},...,\frac{1}{p_{k}})\) is in the region of \(\ell^{p}\) improving for \(\Lambda_{K_{k}}\), \[\Ld_{K_{k+1}}(f_{1},f_{2},...,f_{k+1})\lesssim\lambda^{\frac{d}{2}(1-\sum_{i=1}^{k}\frac{1}{p_{i}}+0)}\prod{i=1}^{k}\|f_{k}\|_{p_{k}}\|f_{k+1}\|_{\infty}.\]
    Then, by exploiting the symmetry of the simplex and interpolating between the results, as in the proofs of Theorem \ref{trithm} and Theorem \ref{tetrtheorem}, we get that 
    \[\Ld_{K_{k}}(f_{1},f_{2},...,f_{k})\lesssim\lambda^{\frac{d}{2}(1-\frac{1}{p_{1}}-\sum_{i=1}^{k}\frac{1}{p_{i}})}\prod_{i=1}^{k}\|f_{i}\|_{p_{i}}.\]

\end{proof}
\subsection{Exponent Sharpness}
To show the sharpness of the exponent on \(\lambda\), take \(f_{1}(x)=f_{2}(x)=...=f_{k}(x)=B_{\lambda}(x)\). Then \[\prod_{i=1}^{k}\|f_{i}\|_{p_{i}}\approx\lambda^{\frac{d}{2}(\sum_{i=1}^{k}\frac{1}{p_{i}})}\]
Additionally,
    \[\Ld_{K_{k}}(f_{1},f_{2},...,f_{k})\approx\frac{1}{\prod_{i=1}^{k-1}|S_{\lambda}^{d-i}|}\sum_{x_{1},...x_{k}\in\Z^{d}}\prod_{i=1}^{k}B_{\lambda}(x_{i})\left(\prod_{i=1}^{k-1}\prod_{j=i+1}^{k}S_{\lambda}(x_{i}-x_{j})\right).\]
Scaling up the reasoning we used for \(\Ld_{K_{3}}\), we have that \[\sum_{x_{1},...,x_{k}\in\Z^{d}}B_{\lambda}(x_{k})\prod_{i=1}^{k-1}S_{\lambda}(x_{i}-x_{k})\approx |S_{\lambda}^{d-k+1}|\prod_{i=1}^{k-1}B_{\lambda}(x_{i}).\]
Thus, 
\begin{align*}
    \Ld_{K_{k}}(f_{1},f_{2},...,f_{k})&\approx\frac{1}{\prod_{i=1}^{k-1}|S_{\lambda}^{d-i}|}\sum_{x_{1},...x_{k}\in\Z^{d}}\prod_{i=1}^{k}B_{\lambda}(x_{i})\left(\prod_{i=1}^{k-1}\prod_{j=i+1}^{k}S_{\lambda}(x_{i}-x_{j})\right)\\
    &=\frac{1}{\prod_{i=1}^{k-1}|S_{\lambda}^{d-i}|}\sum_{x_{1},...x_{k-1}\in\Z^{d}}\prod_{i=1}^{k-1}B_{\lambda}(x_{i})\left(\prod_{i=1}^{k-2}\prod_{j=i+1}^{k-1}S_{\lambda}(x_{i}-x_{j})\right)\\
    &\quad\quad\quad\quad\quad\cdot\sum_{x_{k}\in\Z^{d}}B_{\lambda}(x_{k})\prod_{i=1}^{k-1}S_{\lambda}(x_{i}-x_{k})\\
    &\approx \frac{1}{\prod_{i=1}^{k-2}|S_{\lambda}^{d-i}|}\sum_{x_{1},...x_{k-1}\in\Z^{d}}\prod_{i=1}^{k-1}B_{\lambda}(x_{i})\left(\prod_{i=1}^{k-2}\prod_{j=i+1}^{k-1}S_{\lambda}(x_{i}-x_{j})\right)\\
    &\vdots\\
    &\approx\frac{1}{|S_{\lambda}|}\sum_{x_{1},x_{2}\in\Z^{d}}B_{\lambda}(x_{1})B_{\lambda}(x_{2})S_{\lambda}(x_{1}-x_{2})\\
    &= \sum_{x_{1}\in\Z^{d}}B_{\lambda}(x_{1})A_{\lambda}B_{\lambda}(x_{2})\approx  \sum_{x_{1}\in\Z^{d}}B_{\lambda}(x_{1})\\
    &\approx\lambda^{\frac{d}{2}}=\lambda^{\frac{d}{2}(1-\sum_{i=1}^{k}\frac{1}{p_{i}})}\lambda^{\frac{d}{2}\sum_{i=1}^{k}\frac{1}{p_{i}}}\\
    &\approx\lambda^{\frac{d}{2}(1-\sum_{i=1}^{k}\frac{1}{p_{i}})}\prod_{i=1}^{k}\|f_{i}\|_{p_{i}}.
\end{align*}
Thus, the exponent on \(\lambda\) of \(\frac{d}{2}(1-\sum_{i=1}^{k}\frac{1}{p_{i}})\) is the best that we can guarantee.


\bibliographystyle{plain}
\bibliography{references}

@article {BIKP25,
    AUTHOR = {Bhowmik, Pablo and Iosevich, Alex and Koh, Doowon and Pham,
              Thang},
     TITLE = {Multi-linear forms, graphs, and {$L^p$}-improving measures in
              {$\Bbb F_q^d$}},
   JOURNAL = {Canad. J. Math.},
  FJOURNAL = {Canadian Journal of Mathematics. Journal Canadien de
              Math\'{e}matiques},
    VOLUME = {77},
      YEAR = {2025},
    NUMBER = {1},
     PAGES = {208--251},
      ISSN = {0008-414X,1496-4279},
   MRCLASS = {42B05},
  MRNUMBER = {4861735},
       DOI = {10.4153/S0008414X2300086X},
       URL = {https://doi.org/10.4153/S0008414X2300086X},
}

@article {KL20,
    AUTHOR = {Kesler, R. and Lacey, M. T.},
     TITLE = {{$\ell^p$}-improving inequalities for discrete spherical
              averages},
   JOURNAL = {Anal. Math.},
  FJOURNAL = {Analysis Mathematica},
    VOLUME = {46},
      YEAR = {2020},
    NUMBER = {1},
     PAGES = {85--95},
      ISSN = {0133-3852,1588-273X},
   MRCLASS = {42B25},
  MRNUMBER = {4064582},
MRREVIEWER = {Stefan\ Steinerberger},
       DOI = {10.1007/s10476-020-0019-9},
       URL = {https://doi.org/10.1007/s10476-020-0019-9},
}

@article {H20,
    AUTHOR = {Hughes, Kevin},
     TITLE = {{$\ell^p$}-improving for discrete spherical averages},
   JOURNAL = {Ann. H. Lebesgue},
  FJOURNAL = {Annales Henri Lebesgue},
    VOLUME = {3},
      YEAR = {2020},
     PAGES = {959--980},
      ISSN = {2644-9463},
   MRCLASS = {42B25 (11L07 11P05 11P55 37A44 37A46)},
  MRNUMBER = {4149830},
MRREVIEWER = {Yu\ Liu},
       DOI = {10.5802/ahl.50},
       URL = {https://doi.org/10.5802/ahl.50},
}

@article {CLM21,
    AUTHOR = {Cook, Brian and Lyall, Neil and Magyar, \'{A}kos},
     TITLE = {Multilinear maximal operators associated to simplices},
   JOURNAL = {J. Lond. Math. Soc. (2)},
  FJOURNAL = {Journal of the London Mathematical Society. Second Series},
    VOLUME = {104},
      YEAR = {2021},
    NUMBER = {4},
     PAGES = {1491--1514},
      ISSN = {0024-6107,1469-7750},
   MRCLASS = {42B25},
  MRNUMBER = {4339943},
MRREVIEWER = {Vjekoslav\ Kova\v{c}},
       DOI = {10.1112/jlms.12467},
       URL = {https://doi.org/10.1112/jlms.12467},
}

@article {MSW02,
    AUTHOR = {Magyar, A. and Stein, E. M. and Wainger, S.},
     TITLE = {Discrete analogues in harmonic analysis: spherical averages},
   JOURNAL = {Ann. of Math. (2)},
  FJOURNAL = {Annals of Mathematics. Second Series},
    VOLUME = {155},
      YEAR = {2002},
    NUMBER = {1},
     PAGES = {189--208},
      ISSN = {0003-486X,1939-8980},
   MRCLASS = {42B25 (11K70 43A07)},
  MRNUMBER = {1888798},
MRREVIEWER = {Loukas\ Grafakos},
       DOI = {10.2307/3062154},
       URL = {https://doi.org/10.2307/3062154},
}

@article {IPWZ25,
    AUTHOR = {Iosevich, A. and Palsson, E. and Wyman, E. and Zhai, Y.},
     TITLE = {Multi-linear forms, structure of graphs and {L}ebesgue spaces},
   JOURNAL = {Math. Z.},
  FJOURNAL = {Mathematische Zeitschrift},
    VOLUME = {310},
      YEAR = {2025},
    NUMBER = {4},
     PAGES = {Paper No. 76, 25},
      ISSN = {0025-5874,1432-1823},
   MRCLASS = {47A07 (05C76 42B20)},
  MRNUMBER = {4915248},
MRREVIEWER = {Duong\ Quoc\ Huy},
       DOI = {10.1007/s00209-025-03761-3},
       URL = {https://doi.org/10.1007/s00209-025-03761-3},
}

@article {AKP24,
    AUTHOR = {Anderson, Theresa C. and Kumchev, Angel V. and Palsson,
              Eyvindur A.},
     TITLE = {A framework for discrete bilinear spherical averages and
              applications to {$\ell^p$}-improving estimates},
   JOURNAL = {Colloq. Math.},
  FJOURNAL = {Colloquium Mathematicum},
    VOLUME = {175},
      YEAR = {2024},
    NUMBER = {1},
     PAGES = {55--76},
      ISSN = {0010-1354,1730-6302},
   MRCLASS = {42B25 (47A07)},
  MRNUMBER = {4731985},
       DOI = {10.4064/cm9216-1-2024},
       URL = {https://doi.org/10.4064/cm9216-1-2024},
}

@article {AKP22,
    AUTHOR = {Anderson, Theresa C. and Kumchev, Angel V. and Palsson,
              Eyvindur A.},
     TITLE = {Discrete maximal operators over surfaces of higher
              codimension},
   JOURNAL = {Matematica},
  FJOURNAL = {La Matematica. Official Journal of the Association for Women
              in Mathematics},
    VOLUME = {1},
      YEAR = {2022},
    NUMBER = {2},
     PAGES = {442--479},
      ISSN = {2730-9657},
   MRCLASS = {11K70 (11L07 11P55 42B25)},
  MRNUMBER = {4445931},
MRREVIEWER = {Xianchang\ Meng},
       DOI = {10.1007/s44007-021-00017-4},
       URL = {https://doi.org/10.1007/s44007-021-00017-4},
}

@article {IPS22,
    AUTHOR = {Iosevich, Alex and Palsson, Eyvindur Ari and Sovine, Sean R.},
     TITLE = {Simplex averaging operators: quasi-{B}anach and
              {$L^p$}-improving bounds in lower dimensions},
   JOURNAL = {J. Geom. Anal.},
  FJOURNAL = {Journal of Geometric Analysis},
    VOLUME = {32},
      YEAR = {2022},
    NUMBER = {3},
     PAGES = {Paper No. 87, 16},
      ISSN = {1050-6926,1559-002X},
   MRCLASS = {42B20},
  MRNUMBER = {4363760},
MRREVIEWER = {B.\ S.\ Rubin},
       DOI = {10.1007/s12220-021-00843-6},
       URL = {https://doi.org/10.1007/s12220-021-00843-6},
}

@inproceedings {L73,
    AUTHOR = {Littman, Walter},
     TITLE = {{$L^{p}-L^{q}$}-estimates for singular integral operators
              arising from hyperbolic equations},
 BOOKTITLE = {Partial differential equations ({P}roc. {S}ympos. {P}ure
              {M}ath., {V}ol. {XXIII}, {U}niv. {C}alifornia, {B}erkeley,
              {C}alif., 1971)},
    SERIES = {Proc. Sympos. Pure Math., Vol. XXIII},
     PAGES = {479--481},
 PUBLISHER = {Amer. Math. Soc., Providence, RI},
      YEAR = {1973},
   MRCLASS = {47G05},
  MRNUMBER = {358443},
MRREVIEWER = {L.\ Cattabriga},
}

@article {B86,
    AUTHOR = {Bourgain, J.},
     TITLE = {Averages in the plane over convex curves and maximal
              operators},
   JOURNAL = {J. Analyse Math.},
  FJOURNAL = {Journal d'Analyse Math\'{e}matique},
    VOLUME = {47},
      YEAR = {1986},
     PAGES = {69--85},
      ISSN = {0021-7670,1565-8538},
   MRCLASS = {42B25 (52A10)},
  MRNUMBER = {874045},
MRREVIEWER = {K.\ J.\ Falconer},
       DOI = {10.1007/BF02792533},
       URL = {https://doi.org/10.1007/BF02792533},
}

@article {S76,
    AUTHOR = {Stein, Elias M.},
     TITLE = {Maximal functions. {I}. {S}pherical means},
   JOURNAL = {Proc. Nat. Acad. Sci. U.S.A.},
  FJOURNAL = {Proceedings of the National Academy of Sciences of the United
              States of America},
    VOLUME = {73},
      YEAR = {1976},
    NUMBER = {7},
     PAGES = {2174--2175},
      ISSN = {0027-8424},
   MRCLASS = {42A40 (43A85)},
  MRNUMBER = {420116},
MRREVIEWER = {Alberto\ Torchinsky},
       DOI = {10.1073/pnas.73.7.2174},
       URL = {https://doi.org/10.1073/pnas.73.7.2174},
}

@article {S70,
    AUTHOR = {Strichartz, Robert S.},
     TITLE = {Convolutions with kernels having singularities on a sphere},
   JOURNAL = {Trans. Amer. Math. Soc.},
  FJOURNAL = {Transactions of the American Mathematical Society},
    VOLUME = {148},
      YEAR = {1970},
     PAGES = {461--471},
      ISSN = {0002-9947,1088-6850},
   MRCLASS = {47.70 (46.00)},
  MRNUMBER = {256219},
MRREVIEWER = {Z.\ Ditzian},
       DOI = {10.2307/1995383},
       URL = {https://doi.org/10.2307/1995383},
}

@article {M97,
    AUTHOR = {Magyar, Akos},
     TITLE = {{$L^p$}-bounds for spherical maximal operators on {$\bold
              Z^n$}},
   JOURNAL = {Rev. Mat. Iberoamericana},
  FJOURNAL = {Revista Matem\'atica Iberoamericana},
    VOLUME = {13},
      YEAR = {1997},
    NUMBER = {2},
     PAGES = {307--317},
      ISSN = {0213-2230},
   MRCLASS = {42B25},
  MRNUMBER = {1617657},
MRREVIEWER = {Sundaram\ Thangavelu},
       DOI = {10.4171/RMI/222},
       URL = {https://doi.org/10.4171/RMI/222},
}

@article {BIT16,
    AUTHOR = {Bennett, Michael and Iosevich, Alexander and Taylor, Krystal},
     TITLE = {Finite chains inside thin subsets of {$\Bbb{R}^d$}},
   JOURNAL = {Anal. PDE},
  FJOURNAL = {Analysis \& PDE},
    VOLUME = {9},
      YEAR = {2016},
    NUMBER = {3},
     PAGES = {597--614},
      ISSN = {2157-5045,1948-206X},
   MRCLASS = {28A75 (42B10)},
  MRNUMBER = {3518531},
MRREVIEWER = {Gareth\ Speight},
       DOI = {10.2140/apde.2016.9.597},
       URL = {https://doi.org/10.2140/apde.2016.9.597},
}

@article {OT22,
    AUTHOR = {Ou, Yumeng and Taylor, Krystal},
     TITLE = {Finite point configurations and the regular value theorem in a
              fractal setting},
   JOURNAL = {Indiana Univ. Math. J.},
  FJOURNAL = {Indiana University Mathematics Journal},
    VOLUME = {71},
      YEAR = {2022},
    NUMBER = {4},
     PAGES = {1707--1761},
      ISSN = {0022-2518,1943-5258},
   MRCLASS = {42B10 (28A78 52C10)},
  MRNUMBER = {4481098},
MRREVIEWER = {Rami\ Ayoush},
}

@article{pinnedtrees,
title = {Nonempty interior of pinned distance and tree sets},
journal = {Advances in Mathematics},
volume = {493},
pages = {110917},
year = {2026},
issn = {0001-8708},
doi = {https://doi.org/10.1016/j.aim.2026.110917},
url = {https://www.sciencedirect.com/science/article/pii/S0001870826001398},
author = {Tainara Borges and Benjamin Foster and Yumeng Ou and Eyvindur Palsson}
}

@article {GILP15,
    AUTHOR = {Greenleaf, Allan and Iosevich, Alex and Liu, Bochen and
              Palsson, Eyvindur},
     TITLE = {A group-theoretic viewpoint on {E}rd\"os-{F}alconer problems
              and the {M}attila integral},
   JOURNAL = {Rev. Mat. Iberoam.},
  FJOURNAL = {Revista Matem\'atica Iberoamericana},
    VOLUME = {31},
      YEAR = {2015},
    NUMBER = {3},
     PAGES = {799--810},
      ISSN = {0213-2230,2235-0616},
   MRCLASS = {42B20 (52C10)},
  MRNUMBER = {3420476},
MRREVIEWER = {Tuomas\ P.\ Hyt\"onen},
       DOI = {10.4171/RMI/854},
       URL = {https://doi.org/10.4171/RMI/854},
}

@article {IPPS22,
    AUTHOR = {Iosevich, Alex and Pham, Minh-Quy and Pham, Thang and Shen,
              Chun-Yen},
     TITLE = {Pinned simplices and connections to product of sets on
              paraboloids},
   JOURNAL = {Indiana Univ. Math. J.},
  FJOURNAL = {Indiana University Mathematics Journal},
    VOLUME = {74},
      YEAR = {2025},
    NUMBER = {3},
     PAGES = {647--668},
      ISSN = {0022-2518,1943-5258},
   MRCLASS = {52C10 (28A80 42B10)},
  MRNUMBER = {4946877},
}

@article {GI12,
    AUTHOR = {Greenleaf, Allan and Iosevich, Alex},
     TITLE = {On triangles determined by subsets of the {E}uclidean plane,
              the associated bilinear operators and applications to discrete
              geometry},
   JOURNAL = {Anal. PDE},
  FJOURNAL = {Analysis \& PDE},
    VOLUME = {5},
      YEAR = {2012},
    NUMBER = {2},
     PAGES = {397--409},
      ISSN = {2157-5045,1948-206X},
   MRCLASS = {42B15 (52C10)},
  MRNUMBER = {2970712},
MRREVIEWER = {Andreas\ Seeger},
       DOI = {10.2140/apde.2012.5.397},
       URL = {https://doi.org/10.2140/apde.2012.5.397},
}

@incollection {EHI13,
    AUTHOR = {Erdogan, Burak and Hart, Derrick and Iosevich, Alex},
     TITLE = {Multiparameter projection theorems with applications to
              sums-products and finite point configurations in the
              {E}uclidean setting},
 BOOKTITLE = {Recent advances in harmonic analysis and applications},
    SERIES = {Springer Proc. Math. Stat.},
    VOLUME = {25},
     PAGES = {93--103},
 PUBLISHER = {Springer, New York},
      YEAR = {2013},
      ISBN = {978-1-4614-4565-4; 978-1-4614-4564-7},
   MRCLASS = {28A80 (42B08)},
  MRNUMBER = {3066881},
MRREVIEWER = {Li-Feng\ Xi},
       DOI = {10.1007/978-1-4614-4565-4\_11},
       URL = {https://doi.org/10.1007/978-1-4614-4565-4_11},
}

@article {PRA23,
    AUTHOR = {Palsson, Eyvindur Ari and Romero Acosta, Francisco},
     TITLE = {A {M}attila-{S}j\"olin theorem for triangles},
   JOURNAL = {J. Funct. Anal.},
  FJOURNAL = {Journal of Functional Analysis},
    VOLUME = {284},
      YEAR = {2023},
    NUMBER = {6},
     PAGES = {Paper No. 109814, 20},
      ISSN = {0022-1236,1096-0783},
   MRCLASS = {28A78 (42B20 52C10)},
  MRNUMBER = {4530888},
MRREVIEWER = {Vladimir\ Eiderman},
       DOI = {10.1016/j.jfa.2022.109814},
       URL = {https://doi.org/10.1016/j.jfa.2022.109814},
}

@article {PRA25,
    AUTHOR = {Palsson, Eyvindur Ari and Romero Acosta, Francisco},
     TITLE = {A {M}attila-{S}j\"olin theorem for simplices in low
              dimensions},
   JOURNAL = {Math. Ann.},
  FJOURNAL = {Mathematische Annalen},
    VOLUME = {391},
      YEAR = {2025},
    NUMBER = {1},
     PAGES = {1123--1146},
      ISSN = {0025-5831,1432-1807},
   MRCLASS = {28A75 (28A78 42B20 52A20)},
  MRNUMBER = {4846807},
MRREVIEWER = {Bochen\ Liu},
       DOI = {10.1007/s00208-024-02948-z},
       URL = {https://doi.org/10.1007/s00208-024-02948-z},
}

@article {GGIP15,
    AUTHOR = {Grafakos, Loukas and Greenleaf, Allan and Iosevich, Alex and
              Palsson, Eyvindur},
     TITLE = {Multilinear generalized {R}adon transforms and point
              configurations},
   JOURNAL = {Forum Math.},
  FJOURNAL = {Forum Mathematicum},
    VOLUME = {27},
      YEAR = {2015},
    NUMBER = {4},
     PAGES = {2323--2360},
      ISSN = {0933-7741,1435-5337},
   MRCLASS = {42B15 (05D05)},
  MRNUMBER = {3365800},
       DOI = {10.1515/forum-2013-0128},
       URL = {https://doi.org/10.1515/forum-2013-0128},
}

@article {GIT22,
    AUTHOR = {Greenleaf, Allan and Iosevich, Alex and Taylor, Krystal},
     TITLE = {On {$k$}-point configuration sets with nonempty interior},
   JOURNAL = {Mathematika},
  FJOURNAL = {Mathematika. A Journal of Pure and Applied Mathematics},
    VOLUME = {68},
      YEAR = {2022},
    NUMBER = {1},
     PAGES = {163--190},
      ISSN = {0025-5793,2041-7942},
   MRCLASS = {28A75 (28A80 52C10 58J40)},
  MRNUMBER = {4405974},
MRREVIEWER = {Xiumin\ Du},
       DOI = {10.1112/mtk.12114},
       URL = {https://doi.org/10.1112/mtk.12114},
}

@article {GIT24,
    AUTHOR = {Greenleaf, Allan and Iosevich, Alex and Taylor, Krystal},
     TITLE = {Nonempty interior of configuration sets via microlocal
              partition optimization},
   JOURNAL = {Math. Z.},
  FJOURNAL = {Mathematische Zeitschrift},
    VOLUME = {306},
      YEAR = {2024},
    NUMBER = {4},
     PAGES = {Paper No. 66, 20},
      ISSN = {0025-5874,1432-1823},
   MRCLASS = {28A75 (28A80 52C10 58J40)},
  MRNUMBER = {4716767},
MRREVIEWER = {Stefan\ Steinerberger},
       DOI = {10.1007/s00209-024-03466-z},
       URL = {https://doi.org/10.1007/s00209-024-03466-z},
}

@article {GIP17,
    AUTHOR = {Greenleaf, Allan and Iosevich, Alex and Pramanik, Malabika},
     TITLE = {On necklaces inside thin subsets of {$\Bbb R^d$}},
   JOURNAL = {Math. Res. Lett.},
  FJOURNAL = {Mathematical Research Letters},
    VOLUME = {24},
      YEAR = {2017},
    NUMBER = {2},
     PAGES = {347--362},
      ISSN = {1073-2780,1945-001X},
   MRCLASS = {28A80 (43A46)},
  MRNUMBER = {3685274},
       DOI = {10.4310/MRL.2017.v24.n2.a4},
       URL = {https://doi.org/10.4310/MRL.2017.v24.n2.a4},
}

@misc{IMMM25,
      author={Alex Iosevich and Akos Magyar and Alex McDonald and Brian McDonald},
      title={The {VC}-dimension and point configurations in $\mathbb{R}^d$}, 
      year={2025},
      eprint={2510.13984},
      archivePrefix={arXiv},
      primaryClass={math.CA},
      note = {arXiv preprint arXiv:2510.13984},
      url={https://arxiv.org/abs/2510.13984}, 
}

@misc{BFOPRA26,
      title={Falconer-type results for any finite graph with multiple pins}, 
      author={Tainara Borges and Ben Foster and Yumeng Ou and Eyvindur Palsson and Francisco Romero Acosta},
      year={2026},
      eprint={2603.01954},
      archivePrefix={arXiv},
      primaryClass={math.CA},
      note = {arXiv preprint arXiv:2603.01954},
      url={https://arxiv.org/abs/2603.01954}, 
}

@book {K86,
    AUTHOR = {Kitaoka, Y.},
     TITLE = {Lectures on {S}iegel modular forms and representation by
              quadratic forms},
    SERIES = {Tata Institute of Fundamental Research Lectures on Mathematics
              and Physics},
    VOLUME = {77},
 PUBLISHER = {Tata Institute of Fundamental Research, Bombay; by
              Springer-Verlag, Berlin},
      YEAR = {1986},
     PAGES = {vi+227},
      ISBN = {3-540-16472-3},
   MRCLASS = {11F46 (11D85 11E45)},
  MRNUMBER = {843330},
MRREVIEWER = {O.\ M.\ Fomenko},
       DOI = {10.1007/978-3-662-00779-2},
       URL = {https://doi.org/10.1007/978-3-662-00779-2},
}

@article {R59,
    AUTHOR = {Raghavan, S.},
     TITLE = {Modular forms of degree {$n$} and representation by quadratic
              forms},
   JOURNAL = {Ann. of Math. (2)},
  FJOURNAL = {Annals of Mathematics. Second Series},
    VOLUME = {70},
      YEAR = {1959},
     PAGES = {446--477},
      ISSN = {0003-486X},
   MRCLASS = {10.20},
  MRNUMBER = {122803},
MRREVIEWER = {H.\ D.\ Kloosterman},
       DOI = {10.2307/1970325},
       URL = {https://doi.org/10.2307/1970325},
}

@article {S44,
    AUTHOR = {Siegel, Carl Ludwig},
     TITLE = {On the theory of indefinite quadratic forms},
   JOURNAL = {Ann. of Math. (2)},
  FJOURNAL = {Annals of Mathematics. Second Series},
    VOLUME = {45},
      YEAR = {1944},
     PAGES = {577--622},
      ISSN = {0003-486X},
   MRCLASS = {10.0X},
  MRNUMBER = {10574},
MRREVIEWER = {B.\ W.\ Jones},
       DOI = {10.2307/1969191},
       URL = {https://doi.org/10.2307/1969191},
}
\vspace{25mm}

\end{document}